\newtheorem{theorem}{Theorem}[section]
\newtheorem{corollary}[theorem]{Corollary}
\newtheorem{lemma}[theorem]{Lemma}
\newtheorem{proposition}[theorem]{Proposition}
\theoremstyle{definition}
\newtheorem{definition}[theorem]{Definition}
\newtheorem{remark}[theorem]{Remark}
\newtheorem{example}[theorem]{Example}
\newtheorem{question}[theorem]{Question}
\newcommand{\im}{{\rm im}}
\newcommand{\Aut}{{\rm Aut}}
\newcommand{\Ad}{{\rm Ad}\,}
\newcommand{\id}{{\rm id}}
\newcommand{\cB}{{\mathcal B}}
\newcommand{\cE}{{\mathcal E}}
\newcommand{\cF}{{\mathcal F}}
\newcommand{\cM}{{\mathcal M}}
\newcommand{\cN}{{\mathcal N}}
\newcommand{\cU}{{\mathcal U}}
\newcommand{\cV}{{\mathcal V}}
\newcommand{\cW}{{\mathcal W}}
\newcommand{\Cb}{{\mathbb C}}
\newcommand{\Eb}{{\mathbb E}}
\newcommand{\Fb}{{\mathbb F}}
\newcommand{\Kb}{{\mathbb K}}
\newcommand{\Nb}{{\mathbb N}}
\newcommand{\Pb}{{\mathbb P}}
\newcommand{\Rb}{{\mathbb R}}
\newcommand{\Zb}{{\mathbb Z}}
\newcommand{\sA}{{\mathscr A}}
\newcommand{\sV}{{\mathscr V}}
\newcommand{\rL}{{\rm L}}
\newcommand{\tr}{{\rm tr}}
\newcommand{\rk}{{\rm rk}}
\begin{document}

\title{Sylvester rank functions for amenable normal extensions}

\author{Baojie Jiang}
\address{\hskip-\parindent
Baojie Jiang,
College of Mathematics and Statistics, Chongqing University, Chongqing 401331, China.}
\email{jiangbaojie@gmail.com}

\author{Hanfeng Li}
\address{\hskip-\parindent
Hanfeng Li,
Center of Mathematics, Chongqing University,
Chongqing 401331, China and
 Department of Mathematics, SUNY at Buffalo,
Buffalo, NY 14260-2900, USA}
\email{hfli@math.buffalo.edu}

\date{November 20, 2020}

\subjclass[2010]{16D10, 16S35, 43A07, 46L05}
\keywords{Sylvester rank function, amenability, infimum rule, crossed product, tracial state}

\begin{abstract}
We introduce a notion of amenable normal extension $S$ of a unital ring $R$ with a finite approximation system $\cF$, encompassing the amenable algebras over  a field of Gromov and Elek, the twisted crossed product by an amenable group, and the tensor product with a field extension. It is shown that every Sylvester matrix rank function $\rk$ of $R$ preserved by $S$ has a canonical extension to a Sylvester matrix rank function $\rk_{\cF}$ for $S$. In the case of twisted crossed product by an amenable group, and the tensor product with a field extension, it is also shown that $\rk_{\cF}$ depends on $\rk$ continuously. When an amenable group has a twisted action on a unital $C^*$-algebra preserving a tracial state, we also show that two natural Sylvester matrix rank functions on the algebraic twisted crossed product  constructed out of the tracial state coincide.
\end{abstract}

\maketitle

\tableofcontents

\section{Introduction} \label{S-introduction}

Sylvester rank functions for a given unital ring $R$ are numerical invariants for matrices or modules over $R$, describing the rank or dimension of such objects. They can be described in several equivalent ways, either for rectangular matrices, or finitely presented left modules, or pairs of left modules $\cM_1\subseteq \cM_2$, or maps between finitely generated projective left modules, or maps between left modules \cite{Malcolmson80, Schofield, Li19}. They are useful for instance in the study of Kaplansky's direct finiteness conjecture \cite{AOP} and $L^2$-Betti numbers \cite{JZ17, JZ19}, and have attracted much attention recently \cite{AC18, AC20, Elek16}.

Crucial questions about Sylvester rank functions are, given unital rings $R\subseteq S$, when a Sylvester matrix rank function $\rk$ for $R$ can be extended to a Sylvester matrix rank function for $S$ and when such an extension is unique. Of course one needs some conditions on the ring extension $S\supseteq R$. In the literature these questions have been discussed in several different situations.

For a field $\Kb$, Gromov and Elek introduced the notion of amenable $\Kb$-algebras \cite{Gromov99, Elek03a}. This includes the algebras of subexponential growth studied by Rowen \cite{Rowen90}, and is further studied in \cite{ALLWb, Bartholdi08, CSSV, Elek06, Elek17}. In these works people have tried to define dimension for finitely generated modules over an amenable $\Kb$-algebra $S$ using a F{\o}lner sequence of $S$, but it is not clear whether one obtains  a limit along the F{\o}lner sequence or not, as asked by Gromov \cite[page 348]{Gromov99}.

For a twisted crossed product $R*\Gamma$ constructed out of a twisted action of a discrete amenable group $\Gamma$ on $R$  preserving $\rk$, Ara, O'Meara, and Perera showed that $\rk$ can always be extended to a Sylvester matrix rank function $\rk_{R*\Gamma}$ for $R*\Gamma$ using an ultrafilter, and used it to prove Kaplansky's direct finiteness conjecture for free-by-amenable groups \cite{AOP}. They didn't address the question whether $\rk_{R*\Gamma}$ depends on the choice of the ultrafilter or not, equivalently, whether one obtains a limit along a F{\o}lner sequence of $\Gamma$ or not.

When $\Eb/\Kb$ is a field extension and $R$ is a $\Kb$-algebra, Jaikin-Zapirain showed that under some conditions $\rk$ can be extended a Sylvester matrix rank function of $\Eb\otimes_\Kb R$ \cite{JZ19}. More precisely, he constructed a natural extension of $\rk$ to $\Eb\otimes_\Kb R$ when $\Eb/\Kb$ is algebraic, and when $\Eb=\Kb(t)$ for some $t$ transcendental over $\Kb$ and $\rk$ is regular in the sense that it is induced from a Sylvester matrix rank function of a unital von Neumann regular ring $R'$ and a unital ring homomorphism $R\rightarrow R'$. These natural extensions are fundamental in his work on Atiyah conjecture and L\"{u}ck's approximation conjecture. He asked the question whether there is a general construction of a natural extension of $\rk$ to $\Eb\otimes_\Kb R$ unifying his construction in the above two cases \cite[Question 8.8]{JZ17}.

In this work we give a general framework to unify the above situations. We introduce a notion of right amenable extension $S$ of $R$ (see Definition~\ref{D-amenable}). This means that we are given a collection $\cF$ of finitely generated free left $R$-submodules of $S$ satisfying suitable conditions. Intuitively, elements of $\cF$ play the role of right F{\o}lner sets. Then the construction of Ara, O'Meara, and Perera also works in this general situation. Namely, given a non-principal ultrafilter $\omega$ on the set $\hat{\cF}$ of nonzero elements in $\cF$, defining $\rk_{\cF}(A)$ as $\lim_{\cW\rightarrow \omega}\frac{\rk_\cW(A)}{\dim (\cW)}$ for each $A\in M_{n, m}(S)$ yields a Sylvester matrix rank function for $S$, where $\dim(\cW)$ is the rank of $\cW$ as a free left $R$-module and $\rk_\cW(A)$ is $\rk(B)$ for some matrix $B$ over $R$ associated to $\cW$ and $A$.

In order to guarantee that $\rk_{\cF}$ does not depend on the choice of the ultrafilter $\omega$, we introduce a stronger notion of right amenable normal extension $S$ of $R$ preserving $\rk$ (see Definition~\ref{D-normal}). This includes all the above cases except that for the amenable $\Kb$-algebras $S$ studied in \cite{Gromov99, Elek03a} one needs to assume that $S$ has no zero-divisors. Our first main result is the following.

\begin{theorem} \label{T-main infimum for matrix}
Let $\rk$ be a Sylvester matrix rank function of a unital ring $R$, and let $S$ be a right amenable normal extension  of $R$ with a finite approximate system $\cF$ preserving $\rk$.
For each $A\in M_{n, m}(S)$,  $\frac{\rk_\cW(A)}{\dim(\cW)}$ converges to $\inf_{\cV\in \hat{\cF}}\frac{\rk_\cV(A)}{\dim(\cV)}$ when $\cW\in \hat{\cF}$ becomes more and more right invariant.
\end{theorem}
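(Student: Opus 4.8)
The plan is to show that the net $\bigl(\rk_\cW(A)/\dim(\cW)\bigr)_{\cW}$, indexed by the elements of $\hat\cF$ ordered by degree of right invariance, is a Cauchy net converging to its infimum $\alpha := \inf_{\cV\in\hat\cF}\rk_\cV(A)/\dim(\cV)$. The key is a \emph{subadditivity} (or "Ornstein--Weiss") estimate: if $\cW$ is sufficiently right invariant relative to some fixed $\cV\in\hat\cF$, then $\cW$ can be approximately tiled by right-translates of $\cV$, up to a small error set, in such a way that $\rk_\cW(A)$ is controlled above by a sum of copies of $\rk_\cV(A)$ plus a negligible term. Concretely, I would first fix $\varepsilon>0$ and choose $\cV\in\hat\cF$ with $\rk_\cV(A)/\dim(\cV)<\alpha+\varepsilon$. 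Then, using the Følner-type conditions built into the definition of a finite approximate system, I would decompose a highly right-invariant $\cW$ as a (near-)direct sum of left $R$-modules, most of which are isomorphic (as modules carrying the relevant matrix data) to $\cV$; the leftover piece has $R$-rank at most $\varepsilon\dim(\cW)$.

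The second ingredient is the behavior of $\rk_\cW$ under such decompositions. I expect that $\rk_\cW$ is subadditive with respect to direct-sum decompositions of $\cW$ into $R$-submodules compatible with $A$ — that is, if $\cW = \cW_1 \oplus \cdots \oplus \cW_k \oplus \cW_{\mathrm{err}}$ in the appropriate sense, then
\[
\rk_\cW(A) \le \sum_{i=1}^{k}\rk_{\cW_i}(A) + \dim(\cW_{\mathrm{err}})\cdot n,
\]
where the last term bounds the contribution of the error block crudely by its size times the number of columns. Combining this with the tiling gives
\[
\frac{\rk_\cW(A)}{\dim(\cW)} \le \frac{k\,\rk_\cV(A) + \varepsilon\dim(\cW)\cdot n}{\dim(\cW)} \le (\alpha+\varepsilon) + \varepsilon n,
\]
once $k\dim(\cV)\le \dim(\cW)$ and the error fraction is below $\varepsilon$. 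Since $\varepsilon$ is arbitrary, $\limsup_\cW \rk_\cW(A)/\dim(\cW)\le\alpha$; the reverse inequality $\liminf_\cW \rk_\cW(A)/\dim(\cW)\ge\alpha$ is immediate because every term of the net is $\ge\alpha$ by definition of the infimum. Hence the net converges to $\alpha$, which is exactly the assertion of the theorem and in particular shows independence of the ultrafilter $\omega$.

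The main obstacle, and where the \emph{normality} hypothesis (Definition 1.1, i.e.\ Definition~\ref{D-normal} in the body) must be doing real work, is establishing the quasi-tiling / subadditivity step in a purely ring-theoretic setting: one does not have a group acting by measure-preserving transformations, so the Ornstein--Weiss machinery has to be replaced by a combinatorial argument on the free $R$-submodules in $\cF$, using the normality axioms to produce, inside a right-invariant $\cW$, genuinely complementary copies of a translate of $\cV$ and to control how the matrix data over $S$ restricts to each copy. I would handle this by iterating a single "one-tile extraction" lemma: given $\cW$ right-invariant enough, find a right-translate $\cV g \subseteq \cW$ that is a direct summand of $\cW$ as a left $R$-module with $\rk_{\cV g}(A) = \rk_\cV(A)$, remove it, and repeat on the complement until what remains has small rank; the normality condition is what guarantees at each stage both the existence of such a complemented translate and the invariance of the complement under a slightly smaller test set, so that the iteration does not degrade. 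Verifying that the accumulated errors stay linear in $\dim(\cW)$, and that "more and more right invariant" in the statement can be made precise as a cofinal condition matching the one needed for the tiling, is the delicate bookkeeping I would expect to occupy most of the actual proof.
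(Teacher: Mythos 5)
Your overall architecture (bound the $\limsup$ by $\alpha+O(\varepsilon)$ via a decomposition of a very invariant $\cW$, note the $\liminf\ge\alpha$ direction is trivial) is reasonable, but the step you yourself identify as the main obstacle — quasitiling a sufficiently right-invariant $\cW$ by translates of a single near-optimal $\cV$ up to an error of relative size $\varepsilon$ — is a genuine gap, and it is not one that the normality axioms are known to fill. Already for amenable groups, tiling by a \emph{single} shape is the (open) monotileability problem; Ornstein--Weiss quasitiling (Theorem~\ref{T-OW quasitiling}) needs several tile shapes $F_1,\dots,F_n$ and a nontrivial argument even then. In the present ring-theoretic setting the paper only formulates a \emph{weak} quasitiling property (Definition~\ref{D-weak quasitiling}), verifies it for crossed products and field extensions, and explicitly poses as an open question (Question~\ref{Q-quasitiling}) whether every right amenable normal extension has it; moreover, even that property only asserts that \emph{some} $(\cV',\delta)$-invariant $\cW$ is tileable, not every one, and its tiles are not translates of a prescribed $\cV$. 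So your ``one-tile extraction'' iteration cannot be carried out from the stated hypotheses. There is also a sign-of-translation slip: the rank is insensitive to \emph{left} translates $g\cV$ (since $(g\cV)^nA=g(\cV^nA)$ and $\rk$ is $\sigma_g$-invariant, Lemma~\ref{L-invariant rk}), whereas a right translate $\cV g$ relates $\rk_{\cV g}(A)$ to $\rk_\cV(gA)$, not to $\rk_\cV(A)$.

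The paper takes the other classical route and this is exactly why it works without any tiling: instead of mere subadditivity plus tiling, it exploits \emph{strong} subadditivity. The function $\varphi(\cW)=\rk_\cW(A)=\dim(\cW\cM_1\,|\,S^m)$ satisfies $\varphi(\cV+\cW)+\varphi(\cV\cap\cW)\le\varphi(\cV)+\varphi(\cW)$ by the submodularity of the bivariant rank function (Theorem~\ref{T-bivariant}(ii)), together with $\varphi(g\cV)\le\varphi(\cV)$ for $g\in\cU$ from normality. The linear infimum rule (Lemma~\ref{L-infimum rule}) then proves, by induction on $\dim(\cW)$, the key claim $\varphi(\cW\cV)\le C\dim(\cW\cV)$ for a minimal-dimensional near-optimal $\cV$: when two translates $g\cV$ overlap, the overlap $\cW_1\cV\cap\cW_2\cV$ has dimension strictly less than $\dim(\cV)$, hence satisfies the reverse bound $\varphi\ge C\dim$ by minimality, and strong subadditivity absorbs the overlap. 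One then carves out of an invariant $\cW$ a submodule $\cW^\dag$ with $\cW^\dag\cV\subseteq\cW$ and small complement. No disjointness of translates is ever needed. If you want to salvage your approach, you would have to either prove the quasitiling property in general (answering Question~\ref{Q-quasitiling}) or restrict the theorem to the examples where it is known; the submodularity route is what makes the theorem hold in the stated generality.
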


A byproduct of our proof for Theorem~\ref{T-main infimum for matrix} is an affirmative answer to Gromov's question in the case the amenable $\Kb$-algebra $S$ has no zero-divisors (see Corollary~\ref{C-Gromov} and  Remark~\ref{R-Gromov}). In the literature there are two well-known ways to  guarantee the convergence along F{\o}lner sequences of amenable groups. The first is the strong subadditivity, implying an infimum rule, i.e. the limit is the infimum. The second is the subadditivity implying the existence of the limit via the Ornstein-Weiss lemma. In order to prove Theorem~\ref{T-main infimum for matrix}, we establish a linear infimum rule in Lemma~\ref{L-infimum rule}.

The space of all Sylvester matrix rank functions for $R$ is naturally a compact convex space. One can ask whether $\rk_{\cF}$ depends on $\rk$ continuously or not. We have not been able to answer this question in full generality. At the technical level, this requires uniform convergence along some F{\o}lner sequence.
For this purpose we introduce a weak quasitiling property (Definition~\ref{D-weak quasitiling}), which is a weak analogue of the quasitiling of Ornstein and Weiss for amenable groups in \cite{OW}. Twisted crossed products $R*\Gamma$ for amenable groups $\Gamma$ and $\Eb\otimes_\Kb R$ for field extension $\Eb/\Kb$ and $\Kb$-algebras $R$ have the  weak quasitiling property. Our second main result is the following.

\begin{theorem} \label{T-main continuity}
Assume that $S$ is a right  amenable normal extension of $R$ with $(\cF, \cU)$ satisfying the weak quasitiling property. The map from the space of Sylvester matrix rank functions of $R$ preserved by $S$ to the space of Sylvester matrix rank functions of $S$ sending $\rk$ to $\rk_{\cF}$ is affine and continuous.
\end{theorem}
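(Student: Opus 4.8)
The plan is to reduce both assertions to properties of the functions $\rk\mapsto\rk_\cW(A)/\dim(\cW)$ for fixed $\cW\in\hat\cF$. By construction $\rk_\cW(A)=\rk(B)$ for a matrix $B$ over $R$ depending only on $\cW$ and $A$, not on $\rk$; hence each such function is affine and continuous on the space $P$ of Sylvester matrix rank functions of $R$ preserved by $S$, equipped with the topology of pointwise convergence. By Theorem~\ref{T-main infimum for matrix}, for $\rk\in P$ this net converges, as $\cW$ becomes more and more right invariant, to $\rk_\cF(A)=\inf_{\cV\in\hat\cF}\rk_\cV(A)/\dim(\cV)$; since every term of the net is $\ge\rk_\cF(A)$, we in fact have $\rk_\cF(A)=\inf\{\rk_\cV(A)/\dim(\cV):\cV\in\hat\cF\text{ sufficiently right invariant}\}$. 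As convex combinations of elements of $P$ lie in $P$ and a pointwise limit of affine functions is affine, $\rk\mapsto\rk_\cF$ is affine. For continuity it suffices to fix $A\in M_{n,m}(S)$ and show that $\rk\mapsto\rk_\cF(A)$ is continuous on $P$; being an infimum of continuous functions it is automatically upper semicontinuous, so the point is lower semicontinuity.

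I would deduce lower semicontinuity from the following uniform approximation: for every $\eta>0$ there is a \emph{finite} subset $\cU_\eta\subseteq\hat\cF$ such that, writing $g_\eta(\rk):=\min_{\cW'\in\cU_\eta}\rk_{\cW'}(A)/\dim(\cW')$,
\[
g_\eta(\rk)-\eta\ \le\ \rk_\cF(A)\ \le\ g_\eta(\rk)\qquad\text{for all }\rk\in P .
\]
Granting this, $\rk\mapsto\rk_\cF(A)$ is a uniform limit, as $\eta\to0$, of the continuous functions $g_\eta$, hence continuous. The inequality $\rk_\cF(A)\le g_\eta(\rk)$ is immediate, each $\cW'\in\cU_\eta$ being an admissible test element; and since $\rk_\cF(A)$ equals the infimum over sufficiently right invariant $\cV$ of $\rk_\cV(A)/\dim(\cV)$, the remaining inequality reduces to
\[
\frac{\rk_\cV(A)}{\dim(\cV)}\ \ge\ g_\eta(\rk)-\eta\qquad\text{for every sufficiently right invariant }\cV\in\hat\cF\text{ and every }\rk\in P .
\]

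This estimate is the crux, and is where the weak quasitiling property enters. Fix small $\delta,\epsilon>0$, and let $C_A$ be a constant bounding both $\max(n,m)$ and the ``support size'' of $A$ over $S$ as a left $R$-module. The weak quasitiling property for $(\cF,\cU)$ provides a finite family $\cU_\eta\subseteq\hat\cF$ of tiles, which may be taken $\delta$-right invariant, and a right-invariance threshold beyond which any $\cV\in\hat\cF$ decomposes, as a left $R$-module, as $\cV=\bigl(\bigoplus_t\cV_{j(t)}s_t\bigr)\oplus\cV_{\mathrm{rem}}$, where each $\cV_{j(t)}s_t$ is a right translate of a tile $\cV_{j(t)}\in\cU_\eta$, $\dim(\cV_{\mathrm{rem}})\le\epsilon\dim(\cV)$, and the tiles have total boundary relative to $A$ at most $\delta\dim(\cV)$. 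Let $T$ be the matrix over $R$ representing the compression to $\cV$ of multiplication by $A$ (so $\rk_\cV(A)=\rk(T)$) and let $D$ be its block-diagonal part for the above decomposition; then $\rk(T)\ge\rk(D)-\rk(T-D)$ by the Sylvester inequality $\rk(X+Y)\ge\rk(X)-\rk(Y)$. On one hand, $D$ being block-diagonal and ranks being nonnegative, $\rk(D)\ge\sum_t\rk_{\cV_{j(t)}s_t}(A)=\sum_t\rk_{\cV_{j(t)}}(A)$, where the equality uses exactly that $\rk$ is preserved by $S$ and is therefore uniform over $\rk\in P$. On the other hand every block of $T-D$ is supported on coordinates lying in the boundary of some tile or in $\cV_{\mathrm{rem}}$, so $\rk(T-D)\le C_A(\delta+\epsilon)\dim(\cV)$, uniformly in $\rk$ since the rank of a matrix is at most its number of nonzero columns. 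Dividing by $\dim(\cV)$, using $\sum_t\dim(\cV_{j(t)})\ge(1-\epsilon)\dim(\cV)$ and $\rk_{\cW'}(A)/\dim(\cW')\le\max(n,m)\le C_A$, we get $\rk_\cV(A)/\dim(\cV)\ge(1-\epsilon)g_\eta(\rk)-C_A(\delta+\epsilon)$, which is $\ge g_\eta(\rk)-\eta$ once $\delta,\epsilon$ are small enough in terms of $\eta$ and $C_A$.

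The main obstacle is this last paragraph: decomposing an \emph{arbitrary} sufficiently right invariant $\cV$ by a \emph{fixed, finite} tile family with small total boundary, and — crucially — ensuring that every error term is uniform over $\rk\in P$. This uniformity is precisely what forces the hypotheses that $\rk$ be preserved by $S$ (so that translated ranks are \emph{equal}, not merely close) and that $(\cF,\cU)$ satisfy the weak quasitiling property; it must also be verified separately that twisted crossed products by amenable groups and tensor products with field extensions have this property. The block-matrix bookkeeping with $\rk(X+Y)\ge\rk(X)-\rk(Y)$ and the precise value of $C_A$ are routine once this geometric input is in hand.
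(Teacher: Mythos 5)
Your strategy is essentially the paper's: upper semicontinuity of $\rk\mapsto\rk_{\cF}(A)$ is free from the infimum formula of Theorem~\ref{T-main infimum for matrix}, and lower semicontinuity comes from a quasitiling-based lower bound for $\rk_\cW(A)$ in terms of finitely many fixed tiles, using that $\rk$ is $\sigma_c$-invariant so that ranks over translated tiles $c\cW_j$ are equal. Your packaging as a uniform approximation of $\rk\mapsto\rk_{\cF}(A)$ by the finite minima $g_\eta$ is a reasonable variant of the paper's Lemma~\ref{L-continuous limit} (which isolates the estimate as a continuity statement for an abstract class $\Phi_{C,\cV}$ of subadditive, translation-invariant functions that are additive over $\cV$-separated submodules). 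But there is one genuine gap in how you invoke the hypothesis.

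You use the weak quasitiling property as if it supplied ``a right-invariance threshold beyond which \emph{any} $\cV\in\hat{\cF}$ decomposes'' into translated tiles, and accordingly you reduce the lower bound to proving $\rk_\cV(A)/\dim(\cV)\ge g_\eta(\rk)-\eta$ for \emph{every} sufficiently right invariant $\cV$. Definition~\ref{D-weak quasitiling} asserts only that for each $\cV'$ and $\delta$ there \emph{exists} one $(\cV',\delta)$-invariant $\cW$ admitting an $\varepsilon$-quasitiling; the paper notes explicitly (Example~\ref{E-crossed product2}) that the ``every sufficiently invariant $\cW$'' version is a strictly stronger property, and for field extensions only the weak version is established (Lemma~\ref{L-field extension amenable}). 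So as written your argument does not prove the theorem under its stated hypothesis. The repair is the step the paper actually takes: since by Theorem~\ref{T-main infimum for matrix} the net $\rk_\cW(A)/\dim(\cW)$ \emph{converges} to $\rk_{\cF}(A)$, for each $\rk$ one may choose $(\cV_1,\delta_1)$ so that every $(\cV_1,\delta_1)$-invariant $\cW$ satisfies $\rk_\cW(A)/\dim(\cW)\le\rk_{\cF}(A)+\varepsilon$, and then it suffices to run the tiling estimate on a \emph{single} $(\cV_1,\delta_1)$-invariant quasitiled $\cW$, whose existence the weak property does guarantee; the threshold depends on $\rk$, but the resulting bound $\rk_{\cF}(A)\ge g_\eta(\rk)-\eta$ does not, so your uniform-limit conclusion survives. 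Two smaller points: the tiles are left translates $c\cW_j$ with $c\in\cU$, not right translates; and your ``block-diagonal part $D$'' is only legitimate after shrinking each $\cW_{j,c}$ to an interior piece $\cW_{j,c}'$ with $\cW_{j,c}'\cV_A\subseteq\cW_{j,c}$ so that the images of the blocks are genuinely independent — this is where clause (ii) of Definition~\ref{D-weak quasitiling} and the block-diagonal additivity verified in the proof of Theorem~\ref{T-continuity} are actually used.
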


When a  unital $C^*$-algebra $\sA$ has a tracial state $\tr$, it is well known that $\tr$ induces a Sylvester matrix rank function $\rk_\tr$ for $\sA$. In fact, the von Neumann rank function on group algebras of a discrete group $\Gamma$ is constructed this way from the canonical tracial state of the reduced group $C^*$-algebra $C^*_{\rm red}(\Gamma)$ and plays a fundamental role in the study of $L^2$-Betti numbers \cite{JZ19, Luck02}. For a twisted action $(\alpha, u)$ of a discrete group $\Gamma$ on $\sA$, one has the maximal twisted crossed product $C^*$-algebra $\sA\rtimes_{\alpha, u}\Gamma$ \cite{BS, PR89, PR90, PR92}, which contains an algebraic twisted crossed product $\sA*\Gamma$. If the twisted action $(\alpha, u)$ preserves $\tr$, then $\tr$ extends to a tracial state $\tr_{\alpha, u}$ of $\sA\rtimes_{\alpha, u}\Gamma$  naturally, which in turn induces a Sylvester matrix rank function $\rk_{\tr_{\alpha, u}}$ for $\sA\rtimes_{\alpha, u}\Gamma$. If furthermore $\Gamma$ is amenable, then we also have our Sylvester matrix rank function $(\rk_\tr)_{\cF}$ on $\sA*\Gamma$ constructed out of $\rk_\tr$ and the finite approximation system $\cF$ in Example~\ref{E-crossed product}.  In this case we obtain two Sylvester matrix rank functions on $\sA*\Gamma$, namely $(\rk_\tr)_{\cF}$ and the  restriction of  $\rk_{\tr_{\alpha, u}}$ to $\sA*\Gamma$. Our third main result says that these two rank functions coincide.

\begin{theorem} \label{T-same}
Let $(\alpha, u)$ be a twisted action of an amenable discrete group $\Gamma$ on a unital $C^*$-algebra $\sA$ preserving a tracial state $\tr$, and let $\sA\rtimes_{\alpha, u}\Gamma$ be the maximal twisted crossed product $C^*$-algebra. Let $\cF$ be the finite approximation system in Example~\ref{E-crossed product} for $\sA*\Gamma$.
Then
$\rk_{\tr_{\alpha, u}}(A)=(\rk_\tr)_{\cF}(A)$ for every $A\in M_{n, m}(\sA*\Gamma)$.
\end{theorem}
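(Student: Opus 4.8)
The plan is to transport the identity into the tracial von Neumann algebra generated by $\tr_{\alpha, u}$, interpret $\rk_{\cW_F}$ as a finite truncation, and then invoke Theorem~\ref{T-main infimum for matrix} to reduce everything to a L\"uck-type approximation statement. Let $M := \pi_\tr(\sA)''$, with $\tau$ the faithful normal extension of $\tr$; recall that $\rk_\tr$ is by definition the restriction of the rank function $\rk_\tau$ of $(M, \tau)$. The twisted action $(\alpha, u)$ extends to $M$ and preserves $\tau$, and the GNS representation of $\tr_{\alpha, u}$ on $\sA\rtimes_{\alpha, u}\Gamma$ generates a von Neumann algebra $\cM$, namely the twisted crossed product of $(M, \tau)$ by $\Gamma$, realized on $\cH := \ell^2(\Gamma)\otimes L^2(M, \tau)$ via the regular covariant representation, whose canonical faithful normal trace $\tilde\tau$ restricts to $\tr_{\alpha, u}$; hence $\rk_{\tr_{\alpha, u}}(A) = \rk_{\tilde\tau}(A)$ for every $A\in M_{n, m}(\sA*\Gamma)$. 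In this model $u_g$ acts by a twisted translation, each $a\in M$ acts diagonally, the vector $\delta_e\otimes\hat 1$ implements $\tilde\tau$, and for finite $F\subseteq\Gamma$ we write $\Lambda_F := P_{\ell^2(F)}\otimes 1$, a projection of $\tau$-trace $|F|$.

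Next I would identify $\rk_{\cW_F}(A)$ with a truncation of $A$. Write $A = \sum_{g\in E}A_g u_g$ with $E\subseteq\Gamma$ finite, $A_g\in M_{n, m}(\sA)$, and $e\in E$ after enlarging $E$. By Example~\ref{E-crossed product}, $\cW_F = \bigoplus_{g\in F}\sA u_g$ has $\dim(\cW_F) = |F|$ and, since $A$ has propagation $E$ (so $A\cdot\cW_F^m\subseteq\cW_{EF}^n$), the matrix over $\sA\subseteq M$ attached to $(\cW_F, A)$ is the compression $A\Lambda_F^{(m)}$ (an $n|EF|\times m|F|$ matrix over $\sA$); thus $\rk_{\cW_F}(A) = \rk_\tau(A\Lambda_F^{(m)})$. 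Because $\Gamma$ is amenable and $A$ has finite propagation, the boundary sets $EF\setminus F$ and $E^{-1}F\setminus F$ have cardinality $o(|F|)$ as $F$ becomes more right invariant; comparing $A\Lambda_F^{(m)}$, $\Lambda_F^{(n)}A\Lambda_F^{(m)}$ and $\Lambda_F^{(n)}A$ (which differ by matrices having only $o(|F|)$ free generators in their source or target block) gives
\[
\rk_{\cW_F}(A) \;=\; \tau\!\left(\chi_{(0, \infty)}\big(\Lambda_F^{(n)}AA^*\Lambda_F^{(n)}\big)\right) + o(|F|) .
\]
Dividing by $\dim(\cW_F) = |F|$ and using Theorem~\ref{T-main infimum for matrix}, which gives that the limit on the left exists and equals $(\rk_\tr)_{\cF}(A)$, the theorem reduces to proving
\[
\lim_F\ \tfrac1{|F|}\,\tau\!\left(\chi_{(0, \infty)}\big(\Lambda_F^{(n)}AA^*\Lambda_F^{(n)}\big)\right) \;=\; \rk_{\tilde\tau}(A) .
\]

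For the inequality ``$\geq$'' I would use weak convergence of spectral distributions. Set $C := AA^*\in M_n(\cM)_+$; it has finite propagation along $\ell^2(\Gamma)$, and by $\Gamma$-invariance of $\tilde\tau$ every diagonal $\Gamma$-block of $C^k$ has the same $\tau$-trace, namely $(\Tr_n\otimes\tilde\tau)(C^k)$. Hence for every polynomial $p$ one has $\tfrac1{|F|}(\Tr_n\otimes\Tr_F\otimes\tau)\big(p(\Lambda_F^{(n)}C\Lambda_F^{(n)})\big)\to(\Tr_n\otimes\tilde\tau)(p(C))$, all boundary corrections being negligible in the limit by amenability; by Stone--Weierstrass on $[0, \|A\|^2]$ and the uniform bound $\|C\|\le\|A\|^2$, this extends to all continuous $p$, so the normalized spectral distributions $\mu_F$ of $\Lambda_F^{(n)}C\Lambda_F^{(n)}$ converge weakly to the $(\Tr_n\otimes\tilde\tau)$-spectral distribution $\mu$ of $C$. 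Since the left-hand quantity above equals $n - \mu_F(\{0\})$ and $\rk_{\tilde\tau}(A) = n - \mu(\{0\})$, the portmanteau inequality $\limsup_F\mu_F(\{0\})\le\mu(\{0\})$ yields $\liminf_F\tfrac1{|F|}\tau(\chi_{(0,\infty)}(\cdots))\ge\rk_{\tilde\tau}(A)$.

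The main obstacle is the reverse inequality, equivalently $\mu_F(\{0\})\to\mu(\{0\})$, that is, no spectral mass of the truncations $\Lambda_F^{(n)}C\Lambda_F^{(n)}$ escapes to $0$ uniformly in $F$: $\lim_{\eta\to 0^+}\limsup_F\mu_F\big((0, \eta]\big) = 0$. Weak convergence of the $\mu_F$ by itself does not give this; it is the amenable-group analogue of the hard half of L\"uck's approximation theorem. I would establish it by a Fuglede--Kadison determinant argument in the style of Elek: show that $\tfrac1{|F|}\log\det^{+}_{\tau}\!\big(\Lambda_F^{(n)}A\Lambda_F^{(m)}\big)$ converges to $\log\Delta^{+}_{\tilde\tau}(A)$ (determinants of the invertible parts), the upper bound coming from the $C^*$-norm estimate $\|A\|<\infty$ and the matching lower bound from an Ornstein--Weiss quasitiling of $\Gamma$ together with multiplicativity of $\det^{+}_{\tau}$ over the tiles --- this is exactly where amenability of $\Gamma$ and the $C^*$-structure of $\sA$ are indispensable. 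Convergence of these truncated determinants forces uniform integrability of $t\mapsto -\log t$ against the family $\{\mu_F\}$, which precludes escape of mass. The degenerate case $\Delta^{+}_{\tilde\tau}(A) = 0$ (spectral mass accumulating at, but not atomic at, $0$) is handled by first discarding the spectral part of $A$ with singular values below a threshold $\eta$, applying the determinant estimate to the remainder, and letting $\eta\to 0$, the discarded part being controlled by the ``$\geq$'' inequality already established.
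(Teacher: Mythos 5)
Your reduction to the statement $\lim_F\frac{1}{|F|}\rk_\tau\big(\chi_{(0,\infty)}(\Lambda_F^{(n)}AA^*\Lambda_F^{(n)})\big)=\rk_{\tilde\tau}(A)$ and your proof of the ``$\geq$'' half via moment convergence and portmanteau are fine, but the ``$\leq$'' half --- the only place where anything nontrivial happens --- rests on a Fuglede--Kadison determinant argument that does not go through. The uniform-integrability mechanism requires a lower bound $\liminf_F\frac{1}{|F|}\log\det^+_\tau(\Lambda_F A\Lambda_F)>-\infty$; over $\Zb\Gamma$ this comes for free from integrality, but for an arbitrary tracial state on an arbitrary unital $C^*$-algebra there is no such bound, and proving that the normalized truncated determinants converge to $\log\Delta^+_{\tilde\tau}(A)$ is a strictly stronger (and in this generality unproved) statement than the rank convergence you are after --- the quasitiling only gives an approximate block decomposition of $\Lambda_FA\Lambda_F$, and $\log\det^+$ is not stable under such perturbations the way rank is. Your patch for the degenerate case $\Delta^+_{\tilde\tau}(A)=0$ is also broken: the spectral cutoff $\chi_{[\eta,\infty)}(AA^*)A$ lives in the von Neumann algebra, has no finite propagation, and the discarded piece $A-A_\eta$ has small \emph{norm} but its compressions can have rank of order $|F|$; the already-established ``$\geq$'' inequality bounds a liminf from below and says nothing about this.

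The point you missed is that the hard direction is not hard here, because $\rk_{\cW_F}(A)$ is the rank of a \emph{one-sided} truncation (domain restricted to $\cW_F^n$, codomain unrestricted), not of the two-sided compression you replaced it with. The image of $\pi_{\tr_{\alpha,u}}(A^*)$ restricted to $\overline{\cW_{F^{-1}}+I_{\tr_{\alpha,u}}}^{\,n\times 1}$ is an honest subspace $H$ of $\overline{\im\,\pi_{\tr_{\alpha,u}}(A^*)}$, and a unitary identification of $\overline{\cW_{(FK)^{-1}}+I_{\tr_{\alpha,u}}}^{\,m\times 1}$ with $L^2(\sA,\tr)^{m|FK|\times 1}$ carries $P_H$ to $P_{\overline{\im\,\pi_\tr(B^*)}}$, where $B$ is the matrix over $\sA$ with $\rk_\tr(B)=\rk_{\cW_F}(A)$. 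Writing $\tr''_{\alpha,u}(P_{\overline{\im\,\pi_{\tr_{\alpha,u}}(A^*)}})$ as the average over $t\in(FK)^{-1}$ of $\tr''_{\alpha,u}(\pi(\bar t^*)P\pi(\bar t))$ and using $P_H\le P_{\overline{\im\,\pi_{\tr_{\alpha,u}}(A^*)}}$ gives the exact inequality $\rk_{\tr_{\alpha,u}}(A)\ge\frac{1}{|FK|}\rk_{\cW_F}(A)$ for \emph{every} $F$, with $|F|/|FK|\to1$ by amenability; the reverse inequality $\rk_{\tr_{\alpha,u}}(A)\le\frac{1}{|F|}\rk_{\cW_F}(A)$ follows the same way from $\ker\pi_\tr(B^*)$ being exactly the intersection of $\ker\pi_{\tr_{\alpha,u}}(A^*)$ with the subspace. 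No spectral measures, determinants, or escape-of-mass analysis is needed; combined with Theorem~\ref{T-main infimum for matrix} this already closes the proof. I recommend you rework the argument along these lines rather than trying to repair the determinant step.
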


This article is organized as follows. In Section~\ref{S-preliminary} we set up general notation and recall some basic concepts. We introduce the notation of right amenable extensions and discuss some basic properties in Section~\ref{S-amenable extension}. The construction of Ara, O'Meara, and Perera is applied in Section~\ref{S-rank for amenable extension} to right amenable extensions to yield Sylvester matrix rank functions for the extensions. We introduce the notion of right amenable normal extensions in Section~\ref{S-normal amenable extension}. Theorems~\ref{T-main infimum for matrix} and \ref{T-main continuity} are proven in Sections~\ref{S-infimum rule} and \ref{S-continuity} respectively. In Section~\ref{S-bivariant extension} we describe the bivariant Sylvester module rank function corresponding to $\rk_{\cF}$ under the condition of weak quasitiling property.  In Section~\ref{S-field extension} we discuss in detail the case of $\Eb\otimes_\Kb R$ for field extensions $\Eb/\Kb$, showing that $\rk_{\cF}$ behaves well with respect to compositions of field extensions and that our construction extends that of Jaikin-Zapirain, thus answering his question \cite[Question 8.8]{JZ17} affirmatively. Theorem~\ref{T-same} is proven in Section~\ref{S-trace}.
\medskip

\noindent{\it Acknowledgments.}
The second-named author was partially supported by NSF grants DMS-1600717 and DMS-1900746.
Preliminary stages of this work were carried out during his stay in Spring 2018 at ICMAT. He thanks Pere Ara and Andrei Jaikin-Zapirain
for inspiring discussions. We are grateful to the referee for very helpful comments, especially for providing a proof of Proposition 9.6.

\section{Preliminaries} \label{S-preliminary}

Throughout this paper, for a unital ring $R$ (a group $\Gamma$ resp.) we denote by $1_R$ ($e_\Gamma$ resp.) the identity element of $R$ ($\Gamma$ resp.).
For an abelian group $V$ and $n\in \Nb$, we shall write $V^n$ ($V^{n\times 1}$ resp.) for the space of row (column resp.) vectors of length $n$ with entries in $V$.

Let $R$ be a unital ring. A nonzero $a\in R$ is a {\it zero-divisor} if $ab=0$ or $ba=0$ for some nonzero $b\in R$. We say $R$ is a {\it domain} if it has no zero-divisors. Given a left $R$-module $\cM$, for sets $A\subseteq R$ and $\cV\subseteq \cM$, we denote by $A\cV$ the set of finite sums of elements of the form $av$ for $a\in A$ and $v\in \cV$.

\subsection{Sylvester rank functions} \label{SS-Sylvester}

In this subsection we recall the definitions and facts about Sylvester rank functions. We refer the reader to \cite{JZ19, Malcolmson80, Schofield, Li19} for detail.
Let $R$ be a unital ring.
The following definition was given by Malcolmson \cite{Malcolmson80}.

\begin{definition} \label{D-matrix rank}
A {\it Sylvester matrix rank function} for $R$ is an $\Rb_{\ge 0}$-valued function $\rk$ on the set of rectangular matrices over $R$ satisfying the following conditions:
\begin{enumerate}
\item $\rk(0)=0$ and $\rk(1_R)=1$.
\item $\rk(AB)\le \min(\rk(A), \rk(B))$.
\item $\rk(\left[\begin{matrix} A & \\ & B \end{matrix}\right])=\rk(A)+\rk(B)$.
\item $\rk(\left[\begin{matrix} A & C\\ & B \end{matrix}\right])\ge \rk(A)+\rk(B)$.
\end{enumerate}
\end{definition}

In particular, if $A\in M_{n, n}(R)$ and $C\in M_{m, m}(R)$ are invertible, then
$$\rk(ABC)=\rk(B)$$
for every $B\in M_{n, m}(R)$. It follows that for any $A\in M_{n, m}(R)$, $C\in M_{l, m}(R)$ and $B\in M_{l, k}(R)$, one has
$$\rk(\left[\begin{matrix} A & \\ C & B \end{matrix}\right])=\rk(\left[\begin{matrix}  & A\\ B & C \end{matrix}\right])=\rk(\left[\begin{matrix} B & C\\ & A\end{matrix}\right])\ge \rk(A)+\rk(B).$$
For any $A, B\in M_{n, m}(R)$, one also has
$$ \rk(A+B)=\rk(\left[\begin{matrix} I_n & I_n \end{matrix}\right] \left[\begin{matrix} A & \\  & B \end{matrix}\right] \left[\begin{matrix} I_m \\ I_m \end{matrix}\right])\le \rk(\left[\begin{matrix} A & \\ & B \end{matrix}\right])=\rk(A)+\rk(B).$$

Equipped with the pointwise operations, the space $\Pb(R)$ of all Sylvester matrix rank functions for $R$ is a compact convex set. The following definition was given in \cite{Li19}.

\begin{definition} \label{D-bivariant}
A {\it bivariant Sylvester module rank function} for $R$ is an $\Rb_{\ge 0}\cup \{+\infty\}$-valued function $(\cM_1, \cM_2)\mapsto \dim(\cM_1|\cM_2)$ on the class of all pairs of left $R$-modules $\cM_1\subseteq \cM_2$ satisfying the following conditions:
\begin{enumerate}
\item $\dim(\cM_1|\cM_2)$ is an isomorphism invariant.
\item Writing $\dim(\cM)=\dim(\cM|\cM)$, we have $\dim(\{0\})=0$ and $\dim(R)=1$.
\item For any left $R$-modules $\cM_1\subseteq \cM_2$ and $\cM_1'\subseteq \cM_2'$, one has
$$ \dim(\cM_1\oplus \cM_1'|\cM_2\oplus \cM_2')=\dim(\cM_1|\cM_2)+\dim(\cM_1'|\cM_2').$$
\item $\dim(\cM_1|\cM_2)=\sup_{\cM_1'}\dim(\cM_1'|\cM_2)$ for $\cM_1'$ ranging over all finitely generated $R$-submodules of $\cM_1$.
\item When $\cM_1$ is finitely generated, $\dim(\cM_1|\cM_2)=\inf_{\cM_2'}\dim(\cM_1|\cM_2')$ for $\cM_2'$ ranging over all finitely generated $R$-submodules of $\cM_2$ containing $\cM_1$.
\item $\dim(\cM_2)=\dim(\cM_1|\cM_2)+\dim(\cM_2/\cM_1)$.
\end{enumerate}
\end{definition}

There is a natural one-to-one correspondence between Sylvester matrix rank functions for $R$ and bivariant Sylvester module rank functions for $R$ \cite[Theorems 2.4 and 3.3]{Li19}.
Given a bivariant Sylvester module rank function $\dim(\cdot |\cdot)$ for $R$, the corresponding Sylvester matrix rank function $\rk$ is determined by
$$\rk(A)=\dim(R^nA|R^m)$$
for all $A\in M_{n, m}(R)$.

We summarize the properties about bivariant Sylvester module rank  functions which we shall need \cite[Theorem 3.4, Proposition 3.19, Proposition 3.20, Proposition 5.1]{Li19}.

\begin{theorem} \label{T-bivariant}
Let $\dim(\cdot|\cdot)$ be a bivariant Sylvester module rank function for $R$. The following are true.
\begin{enumerate}
\item For any left $R$-modules $\cM_1\subseteq \cM_2\subseteq \cM_3$, one has
$$ \dim(\cM_2|\cM_3)=\dim(\cM_1|\cM_3)+\dim(\cM_2/\cM_1|\cM_3/\cM_1).$$
\item For any left $R$-modules $\cM_1, \cM_2\subseteq \cM_3$, one has
$$ \dim(\cM_1+\cM_2|\cM_3)+\dim(\cM_1\cap \cM_2|\cM_3)\le \dim(\cM_1|\cM_3)+\dim(\cM_2|\cM_3).$$
\item For any left $R$-modules $\cM_1\subseteq \cM_2$ and any $R$-module homomorphism $\pi: \cM_2\rightarrow \cM$, one has
$$\dim(\pi(\cM_1)|\pi(\cM_2))\le \dim(\cM_1|\cM_2).$$
\item For any left $R$-modules $\cM, \cM_1\subseteq \cM_2$ with $\cM_1$ finitely generated, denoting by $\pi_{\cM'}$ the homomorphism $\cM_2\rightarrow \cM_2/\cM'$ for every $R$-submodule $\cM'$ of $\cM_2$, one has
    $$ \dim(\pi_\cM(\cM_1)|\cM_2/\cM)=\inf_{\cM'}\dim(\pi_{\cM'}(\cM_1)|\cM_2/\cM')$$
    for $\cM'$ ranging over all finitely generated $R$-submodules of $\cM$.
\end{enumerate}
\end{theorem}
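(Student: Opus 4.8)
I would derive all four assertions from the axioms of Definition~\ref{D-bivariant}, the matrix--module correspondence recalled above, and the matrix axioms of Definition~\ref{D-matrix rank}, the pattern being: prove each statement first for finitely generated (indeed finitely presented) modules, where all dimensions are finite and computable from matrix ranks, then extend to arbitrary modules by a continuity argument. Two preliminary facts are used constantly. First, $\dim(\cM'|\cN)$ is non-decreasing in $\cM'$, immediate from condition~(iv) of Definition~\ref{D-bivariant} since the finitely generated submodules of a smaller module are among those of a larger one; and for finitely generated $\cN$ the quantity $\dim(\cN|\cdot)$ is non-increasing, by condition~(v) for the analogous reason, which extends to arbitrary first argument via condition~(iv). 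Second, a finitely generated module $\cM=R^n/\cK$ has $\dim(\cM)\le n<\infty$ by condition~(vi) and $\dim(R^n)=n$; and on a finitely presented module $\dim(R^m/R^kA)=m-\rk(A)$ by the correspondence and condition~(vi), with analogous rank formulas for finitely presented pairs.

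\textbf{Additivity along a chain (first assertion).} For $\cM_1\subseteq\cM_2\subseteq\cM_3$ all finitely generated --- hence of finite dimension, as are all their quotients --- apply condition~(vi) to $\cM_1\subseteq\cM_3$, to $\cM_2\subseteq\cM_3$, and to $\cM_2/\cM_1\subseteq\cM_3/\cM_1$, using the isomorphism $(\cM_3/\cM_1)/(\cM_2/\cM_1)\cong\cM_3/\cM_2$; since every term is finite one cancels $\dim(\cM_3/\cM_2)$ and obtains the identity. The general case is a two-step bootstrap: first let $\cM_3$ be arbitrary with $\cM_1,\cM_2$ finitely generated, approximating $\cM_3$ from below by finitely generated intermediate modules via condition~(v) and passing to the limit with the monotonicity facts --- here one repeatedly uses that an extension of a finitely generated module by a finitely generated module is finitely generated, so as to choose the approximants compatibly with all three containments; then drop finite generation of $\cM_1$ (and $\cM_2$) by taking directed suprema over their finitely generated submodules via condition~(iv). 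The $+\infty$ cases reduce to these same monotonicity statements, since a sum of nonnegative extended reals is infinite exactly when a summand is; the single precaution is never to form an expression $\infty-\infty$, which is why the two-variable claims must be phrased as inequalities between sums.

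\textbf{The modular and image inequalities (second and third assertions).} The one ingredient beyond condition~(vi) that these need is subadditivity, $\dim(\cM_1+\cM_2|\cM_3)\le\dim(\cM_1|\cM_3)+\dim(\cM_2|\cM_3)$, and this is precisely where Definition~\ref{D-matrix rank} is essential: for finitely presented modules the left-hand side is $\rk\left[\begin{smallmatrix}A\\ B\end{smallmatrix}\right]$ for suitable $A,B$, and from $\left[\begin{smallmatrix}A\\ B\end{smallmatrix}\right]=\left[\begin{smallmatrix} A & \\ & B \end{smallmatrix}\right]\left[\begin{smallmatrix}I\\ I\end{smallmatrix}\right]$ together with conditions~(ii) and~(iii) one gets $\rk\left[\begin{smallmatrix}A\\ B\end{smallmatrix}\right]\le\rk(A)+\rk(B)$; the general case follows by the same bootstrap as above. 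Granting subadditivity and the chain rule, the image inequality reduces, after factoring a homomorphism through its image, to the case of a quotient map $q\colon\cM_2\to\cM_2/\cL$; applying the chain rule to $\cL\subseteq\cM_1+\cL\subseteq\cM_2$ rewrites $\dim(q(\cM_1)|q(\cM_2))$ as $\dim(\cM_1+\cL|\cM_2)-\dim(\cL|\cM_2)$, which is $\le\dim(\cM_1|\cM_2)$ exactly by subadditivity, and the general image inequality then follows from the quotient-map case and the chain rule. The modular inequality is obtained by applying the chain rule to $\cM_2\subseteq\cM_1+\cM_2\subseteq\cM_3$ and to $\cM_1\cap\cM_2\subseteq\cM_1\subseteq\cM_3$ and invoking the image inequality for the quotient map $\cM_3/(\cM_1\cap\cM_2)\to\cM_3/\cM_2$, then rearranging the (finite) terms.

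\textbf{The fourth assertion and the main obstacle.} The inequality ``$\le$'' is immediate from the image inequality applied to the quotient maps $\cM_2/\cM'\to\cM_2/\cM$. For ``$\ge$'' I would write $\dim(\pi_{\cM'}(\cM_1)|\cM_2/\cM')=\dim(\cM_1+\cM'|\cM_2)-\dim(\cM'|\cM_2)$ via the chain rule (legitimate since $\cM'$ finitely generated gives $\dim(\cM'|\cM_2)<\infty$), note that along the directed set of finitely generated submodules $\cM'$ of $\cM$ both $\dim(\cM'|\cM_2)$ and $\dim(\cM_1+\cM'|\cM_2)$ increase to $\dim(\cM|\cM_2)$ and $\dim(\cM_1+\cM|\cM_2)$ while $\dim(\pi_{\cM'}(\cM_1)|\cM_2/\cM')$ decreases; when $\dim(\cM|\cM_2)<\infty$ the limit of these differences equals $\dim(\pi_\cM(\cM_1)|\cM_2/\cM)$ by the chain rule at $\cM$, and the case $\dim(\cM|\cM_2)=\infty$ is handled separately using the modular inequality. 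I expect the main obstacle to be the bootstrapping from finitely presented to arbitrary modules --- tracking which intermediate modules can be taken finitely generated while preserving the nested structure, and simultaneously keeping the $+\infty$ bookkeeping clean --- together with the conceptual point that subadditivity (hence the modular and image inequalities) seems to genuinely require the matrix axioms (ii), (iii) of Definition~\ref{D-matrix rank} rather than following formally from conditions~(i)--(vi) of Definition~\ref{D-bivariant} alone, whereas additivity along a chain does.
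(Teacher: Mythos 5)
First, note that the paper itself does not prove Theorem~\ref{T-bivariant}: it quotes all four statements from \cite{Li19} (Theorem 3.4 and Propositions 3.19, 3.20 and 5.1 there), so your proposal can only be measured against what the axioms allow, not against an in-paper argument. Your overall plan (finitely generated case via matrix ranks, then limits via conditions (iv) and (v) of Definition~\ref{D-bivariant}) is reasonable, but as written it has a genuine circularity at its core. In part (i), dropping finite generation of $\cM_1$ cannot be done ``by taking directed suprema via condition (iv)'': writing $\dim(\cM_2|\cM_3)=\dim(\cP|\cM_3)+\dim(\cM_2/\cP|\cM_3/\cP)$ for finitely generated $\cP\subseteq\cM_1$, the first summand increases to $\dim(\cM_1|\cM_3)$ but the second \emph{decreases} as $\cP$ grows, so concluding requires knowing that the decreasing net $\dim(\cM_2/\cP|\cM_3/\cP)$ converges to $\dim(\cM_2/\cM_1|\cM_3/\cM_1)$ --- i.e.\ exactly a continuity statement of the type of part (iv), used together with part (iii); a supremum argument alone gives nothing. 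Meanwhile your proof of the ``$\ge$'' half of part (iv) ends by invoking ``the chain rule at $\cM$'', that is, part (i) with the non-finitely-generated module $\cM$ in the first slot of the chain, and your proof of part (iii) applies the chain rule to $\cL\subseteq\cM_1+\cL\subseteq\cM_2$ with $\cL=\ker$ arbitrary (where, in addition, the subtraction $\dim(\cM_1+\cL|\cM_2)-\dim(\cL|\cM_2)$ is meaningless if $\dim(\cL|\cM_2)=\infty$). So the general case of (i) rests on (iii) and (iv), which in your sketch rest on the general case of (i). The circle can be broken --- prove (iv) and the quotient-map case of (iii) first, with no chain rule at all, by using condition (v) to replace the ambient module by a finitely generated one $Y$, inside which condition (vi) makes every quantity finite and the needed identities become pure arithmetic with (vi) --- but your proposal never performs this reduction, and the vague remark that the case $\dim(\cM|\cM_2)=\infty$ in (iv) is ``handled separately using the modular inequality'' does not supply it.

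A second, related gap is the assertion that subadditivity and hence (ii), (iii) in general ``follow by the same bootstrap'' from the finitely presented computation $\rk\bigl[\begin{smallmatrix}A\\ B\end{smallmatrix}\bigr]\le\rk(A)+\rk(B)$: condition (v) reduces an arbitrary ambient module only to a finitely generated one, not to a free one, and translating $\dim(\cdot\,|Y)$ for a finitely generated $Y$ into matrix ranks over a presentation of $Y$ is itself a continuity statement of the same nature as part (iv) --- precisely the point at issue --- so this step is not a routine limit argument. Your closing ``conceptual point'' is also off: subadditivity and the modular inequality do follow formally from conditions (i)--(vi) of Definition~\ref{D-bivariant}. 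Indeed, inside a finitely generated ambient $Y$ one applies conditions (iii) and (vi) to the exact sequence $0\rightarrow Y/(\cM_1\cap\cM_2)\rightarrow Y/\cM_1\oplus Y/\cM_2\rightarrow Y/(\cM_1+\cM_2)\rightarrow 0$, using that $\dim(N|Y/\cM_1\oplus Y/\cM_2)\le\dim(N)$ for the finitely generated image $N$ of $Y/(\cM_1\cap\cM_2)$, and then extends to arbitrary ambients and arbitrary submodules with (iv) and (v); this is both gap-free and shorter than the matrix detour. In summary: the skeleton (finitely generated first, then (iv)/(v) limits) is the right idea, but the order of deduction must be rearranged --- (iv) and the quotient case of (iii) proved directly inside finitely generated ambients before the general chain rule --- and until that is done the proposal does not constitute a proof.
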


\subsection{Group rings and twisted crossed products} \label{SS-crossed product}

In this subsection we recall the definitions of group rings and  twisted crossed products. We refer the reader to \cite{Passman, Passman89} for detail.

Let $R$ be a unital ring and $\Gamma$ a discrete group.

The {\it group ring} of $\Gamma$ with coefficients in $R$, denoted by $R\Gamma$, consists of all finitely supported functions $f: \Gamma\rightarrow R$. We shall write $f$ as $\sum_{s\in \Gamma} f_s s$ with $f_s\in R$ for all $s\in \Gamma$ and $f_s=0$ except for finitely many $s\in \Gamma$. The addition and multiplication in $R\Gamma$ are given by
$$ \sum_{s\in \Gamma} f_s s+\sum_{s\in \Gamma} g_s s=\sum_{s\in \Gamma}(f_s+g_s)s, \mbox{ and } (\sum_{s\in \Gamma}f_s s)(\sum_{t\in \Gamma} g_t t)=\sum_{s, t\in \Gamma}f_sg_tst.$$

Given an action $\alpha$ of $\Gamma$ on $R$ via automorphisms, one can define the {\it crossed product} $R\rtimes_\alpha \Gamma$.  It also consists of finitely supported functions $\Gamma\rightarrow R$. The addition and multiplication in $R\rtimes_\alpha \Gamma$ are given by
$$ \sum_{s\in \Gamma} f_s s+\sum_{s\in \Gamma} g_s s=\sum_{s\in \Gamma}(f_s+g_s)s, \mbox{ and } (\sum_{s\in \Gamma}f_s s)(\sum_{t\in \Gamma} g_t t)=\sum_{s, t\in \Gamma}f_s\alpha_s(g_t)st.$$
The group ring $R\Gamma$ is the crossed product $R\rtimes_\alpha \Gamma$ for the trivial action of $\Gamma$ on $R$.

When $\Gamma$ has an action $\beta$ on a discrete group $G$ via automorphisms, one has the semidirect product group $G\rtimes_\beta \Gamma$ defined. As a set, it is $G\times \Gamma$. The multiplication of $G\rtimes_\beta \Gamma$ is given by $(g, s)(h, t)=(g\beta_s(h), st)$. Then the group ring $R (G\rtimes_\beta \Gamma)$ is naturally isomorphic to the crossed product $(RG)\rtimes_\alpha \Gamma$, where $\alpha_s(\sum_{g\in G}f_g g)=\sum_{g\in G} f_g \beta_s(g)$ for $s\in \Gamma$ and $\sum_{g\in G}f_g g\in RG$.

A {\it twisted crossed product}\footnote{This is called {\it crossed product} in \cite{Passman89}. We shall call it twisted crossed product since the corresponding notion in $C^*$-algebras is called so.}  $R*\Gamma$  is a unital $\Gamma$-graded ring, i.e. $R*\Gamma=\bigoplus_{s\in \Gamma}V_s$ with $V_sV_t\subseteq V_{st}$ for all $s, t\in \Gamma$, such that $V_{e_\Gamma}$ is isomorphic to $R$ as a unital ring and $V_s$ contains a unit $\bar{s}$ of $R*\Gamma$ for each $s\in \Gamma$. We shall identify $V_{e_\Gamma}$ with $R$. It  is easily checked that for any $s\in \Gamma$, if some $a\in V_s$ is invertible in $R*\Gamma$, then its inverse lies in $V_{s^{-1}}$. It follows that $\bar{s}$ generates $V_s$ as a free left (right resp.) $R$-module.
Then $\bar{s}$ is unique up to multiplication by some unit in $R$ from either left or right.
Clearly crossed products are twisted crossed products. Given a normal subgroup $G$ of $\Gamma$, it is easily checked that the group ring $R\Gamma$ can be written as $(RG)*(\Gamma/G)$.

\subsection{Amenable groups and amenable $\Kb$-algebras} \label{SS-amenable}

Let $\Gamma$ be a discrete group.
For a nonempty finite set $K\subseteq \Gamma$ and  $\varepsilon>0$, we say a nonempty finite set $F\subseteq \Gamma$ is $(K, \varepsilon)$-invariant if $|FK|<(1+\varepsilon)|F|$. The group $\Gamma$ is {\it amenable} if
for any nonempty finite set $K\subseteq \Gamma$ and any $\varepsilon>0$ there is some $(K, \varepsilon)$-invariant nonempty finite set $F\subseteq \Gamma$.

Let $\Kb$ be a field. The notion of amenable $\Kb$-algebras was introduced by Gromov \cite[Section 1.11]{Gromov99} and Elek \cite{Elek03a}.  Let $S$ be a unital $\Kb$-algebra, i.e. $\Kb$ lies in the center of $S$ and $1_{\Kb}=1_S$. We say that $S$ is {\it right amenable} if for every finite subset $V$ of $S$ and every $\varepsilon>0$ there is some finite-dimensional $\Kb$-linear subspace $\cW$ of $S$ containing $1_S$ such that $\dim_{\Kb}(\cW+\cW V)< (1+\varepsilon)\dim_{\Kb}(\cW)$ (see \cite{ALLWb} for a discussion of the difference between requiring $1_S\in \cW$ or not). When $S$ is a domain, one can drop the condition that $1_S\in \cW$ \cite[Corollary 3.10]{ALLWb}. Finitely generated $\Kb$-algebras of subexponential growth and commutative $\Kb$-algebras are right amenable \cite{Elek03a}. For a discrete group $\Gamma$, the group ring $\Kb \Gamma$ is right amenable if and only if $\Gamma$ is amenable \cite{Elek03a, Elek06, Bartholdi08}.

\subsection{Tracial states and Sylvester matrix rank functions} \label{SS-tracial state}

It is well known that a tracial state on a unital $C^*$-algebra $\sA$ induces a Sylvester matrix rank function for $\sA$. For example, for any discrete group $\Gamma$, the von Neumann Sylvester matrix rank function for the reduced group $C^*$-algebras $C^*_{\rm red}(\Gamma)$ is induced from the canonical tracial state of $C^*_{\rm red}(\Gamma)$ \cite{Luck02, JZ19}. Here we recall the construction. We refer the reader to \cite{KR} for basics on $C^*$-algebras and von Neumann algebras.

A {\it $C^*$-algebra} is a $*$-algebra $\sA$ over $\Cb$ equipped with a complete norm such that $\|ab\|\le \|a\|\cdot \|b\|$ and $\|a^*a\|=\|a\|^2$ for all $a, b\in \sA$. An element $a\in \sA$ is {\it positive} if $a=b^*b$ for some $b\in \sA$. A {\it state} for a unital $C^*$-algebra $\sA$ is a unital linear functional $\varphi: \sA\rightarrow \Cb$ such that it is {\it positive} in the sense that it sends positive elements to positive elements.
A state $\varphi$ of $\sA$ is {\it tracial} if $\varphi(ab)=\varphi(ba)$ for all $a, b\in \sA$. A {\it $*$-representation} of $\sA$ on a Hilbert space $H$ is a $*$-homomorphism from $\sA$ to the $C^*$-algebra $\cB(H)$ of all bounded linear operators from $H$ to itself.

Let $\sA$ be a unital $C^*$-algebra and let $\varphi$ be a state of $\sA$. Then one has the GNS representation $\pi_\varphi$ associated to $\varphi$ as follows. The set $I_\varphi:=\{a\in \sA: \varphi(a^*a)=0\}$ is a left ideal of $\sA$. We have an inner product on $\sA/I_\varphi$ defined by $\left<a+I_\varphi, b+I_\varphi\right>:=\varphi(b^*a)$ for $a, b\in \sA$. Denote by $L^2(\sA, \varphi)$ the completion of $\sA/I_\varphi$ under the norm induced by this inner product. Then $\sA$ has a $*$-representation $\pi_\varphi$ on $L^2(\sA, \varphi)$ determined by $\pi_\varphi(a)(b+I_\varphi)=ab+I_\varphi$ for all $a, b\in \sA$. Denote by $\sA''_\varphi$ the von Neumann algebra generated by $\pi_\varphi(\sA)$, i.e. the closure of $\pi_\varphi(\sA)$ in $\cB(L^2(\sA, \varphi))$ under the weak operator topology. Put $\xi_\varphi=1_\sA+I_\varphi\in L^2(\sA, \varphi)$.
Denote by $\varphi''$ the state on $\sA''_\varphi$ given by
$$\varphi''(T)=\left< T\xi_\varphi, \xi_\varphi\right>.$$
Note that $\varphi''$ extends $\varphi$ in the sense that $\varphi(a)=\varphi''(\pi_\varphi(a))$ for all $a\in \sA$.

Now assume that $\tr$ is a tracial state of $\sA$. It is easily checked
that $\tr''$ is a tracial state on $\sA''_\tr$. We extend $\tr''$ to square matrices over $\sA''_\tr$ by
$$\tr''(A)=\sum_{j=1}^n\tr''(A_{jj})$$
for all $A\in M_n(\sA''_\tr)$. Then $\tr''(AB)=\tr''(BA)$ for all $A\in M_{n, m}(\sA''_\tr)$ and $B\in M_{m, n}(\sA''_\tr)$, and $\tr''(A^*A)\ge 0$ for all $A\in M_{n, m}(\sA''_\tr)$.

Let $A\in M_{n, m}(\sA)$. It follows from von Neumann's double commutant theorem that the orthogonal projection $P_{\overline{\im \pi_\tr(A)}}$ from $L^2(\sA, \tr)^{n\times 1}$ to $\overline{\im \pi_\tr(A)}=\overline{\pi_\tr(A) \cdot L^2(\sA, \tr)^{m\times 1}}$  lies in $M_n(\sA''_\tr)$. Thus we may define
\begin{align} \label{E-def of rank from trace}
\rk_\tr(A):=\tr''(P_{\overline{\im \pi_\tr(A)}})\ge 0.
\end{align}
Using the polar decomposition of $A$ we find some $T\in M_{n, m}(\sA''_\tr)$ such that $T^*T=P_{\overline{\im \pi_\tr(A^*)}}$ and $TT^*=P_{\overline{\im \pi_\tr(A)}}$. Thus
\begin{align} \label{E-rank for adjoint}
\rk_\tr(A)=\rk_\tr(A^*).
\end{align}
Denote by $\ker \pi_\tr(A)$ the set of $x\in L^2(\sA, \tr)^{m\times 1}$ satisfying $\pi_\tr(A)x=0$, and by $P_{\ker \pi_\tr(A)}$ the orthogonal projection from $L^2(\sA, \tr)^{m\times 1}$ to $\ker \pi_\tr(A)$. Then $P_{\ker \pi_\tr(A)}$  lies in $M_m(\sA''_\tr)$, and $P_{\ker \pi_\tr(A)}+P_{\overline{\im \pi_\tr(A^*)}}=I_m$.
Thus
\begin{align} \label{E-rank for kernel}
\rk_\tr(A)=\rk_\tr(A^*)=\tr''(I_m-P_{\ker \pi_\tr(A)})=m-\tr''(P_{\ker \pi_\tr(A)}).
\end{align}

For convenience of the reader, we give a proof of the following proposition.

\begin{proposition} \label{P-trace to rank}
Let $\tr$ be a tracial state of a unital $C^*$-algebra $\sA$. Then $\rk_\tr$ defined via \eqref{E-def of rank from trace} is a Sylvester matrix rank function for $\sA$.
\end{proposition}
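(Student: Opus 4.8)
The plan is to verify the four axioms of Definition~\ref{D-matrix rank} directly from the formula \eqref{E-def of rank from trace}, exploiting the facts already recorded above: that $\rk_\tr(A)=\rk_\tr(A^*)$, that $\rk_\tr(A)=m-\tr''(P_{\ker\pi_\tr(A)})$ for $A\in M_{n,m}(\sA)$, that $\tr''$ is a faithful-or-not tracial positive functional on matrices over $\sA''_\tr$ with $\tr''(AB)=\tr''(BA)$, and that $\tr''$ is monotone on projections (a consequence of positivity: if $P\le Q$ are projections then $\tr''(Q-P)\ge 0$). Throughout I will freely identify $A$ with the operator $\pi_\tr(A)$ acting between the Hilbert spaces $L^2(\sA,\tr)^{m\times 1}$ and $L^2(\sA,\tr)^{n\times 1}$, so "range" and "kernel" refer to these operators.

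First, axiom (i) is immediate: $\rk_\tr(0)=\tr''(0)=0$, and for $1_R\in M_{1,1}(\sA)$ the projection onto $\overline{\im \pi_\tr(1_\sA)}=L^2(\sA,\tr)$ is $I_1$, so $\rk_\tr(1_R)=\tr''(I_1)=\tr(1_\sA)=1$. For axiom (iii), block-diagonality gives $\overline{\im\pi_\tr(\diag(A,B))}=\overline{\im\pi_\tr(A)}\oplus\overline{\im\pi_\tr(B)}$, whence the projection is $\diag(P_{\overline{\im A}},P_{\overline{\im B}})$ and additivity of $\tr''$ along the diagonal yields $\rk_\tr(\diag(A,B))=\rk_\tr(A)+\rk_\tr(B)$. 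For axiom (ii), I will use the kernel description: if $C=AB$ with $A\in M_{n,k}$, $B\in M_{k,m}$, then $\ker\pi_\tr(B)\subseteq\ker\pi_\tr(AB)$, so $P_{\ker\pi_\tr(B)}\le P_{\ker\pi_\tr(AB)}$, giving $\tr''(P_{\ker\pi_\tr(AB)})\ge\tr''(P_{\ker\pi_\tr(B)})$ and hence $\rk_\tr(AB)=m-\tr''(P_{\ker\pi_\tr(AB)})\le m-\tr''(P_{\ker\pi_\tr(B)})=\rk_\tr(B)$; combining with $\rk_\tr(AB)=\rk_\tr((AB)^*)=\rk_\tr(B^*A^*)\le\rk_\tr(A^*)=\rk_\tr(A)$ via \eqref{E-rank for adjoint} gives $\rk_\tr(AB)\le\min(\rk_\tr(A),\rk_\tr(B))$.

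The substantive step is axiom (iv): for $A\in M_{n,n'}(\sA)$, $B\in M_{m,m'}(\sA)$, $C\in M_{n,m'}(\sA)$ and $M=\left[\begin{smallmatrix}A&C\\0&B\end{smallmatrix}\right]$, we must show $\rk_\tr(M)\ge\rk_\tr(A)+\rk_\tr(B)$. Here I would argue via kernels. Write $x=\binom{y}{z}\in L^2(\sA,\tr)^{n'\times1}\oplus L^2(\sA,\tr)^{m'\times1}$; then $Mx=0$ means $Ay+Cz=0$ and $Bz=0$. The map $\binom{y}{z}\mapsto z$ sends $\ker\pi_\tr(M)$ into $\ker\pi_\tr(B)$, and its kernel within $\ker\pi_\tr(M)$ is $\{\binom{y}{0}:Ay=0\}\cong\ker\pi_\tr(A)$. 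This realizes $\ker\pi_\tr(M)$ as an extension involving a submodule isomorphic to $\ker\pi_\tr(A)$ and a quotient that embeds (as a Hilbert $\sA''_\tr$-module, i.e. compatibly with the $\tr''$-dimension) into $\ker\pi_\tr(B)$; the trace-dimension is additive on such extensions and monotone under such embeddings, so $\tr''(P_{\ker\pi_\tr(M)})\le\tr''(P_{\ker\pi_\tr(A)})+\tr''(P_{\ker\pi_\tr(B)})$. Since $M$ has $n'+m'$ columns, this rearranges to $\rk_\tr(M)=(n'+m')-\tr''(P_{\ker\pi_\tr(M)})\ge(n'-\tr''(P_{\ker\pi_\tr(A)}))+(m'-\tr''(P_{\ker\pi_\tr(B)}))=\rk_\tr(A)+\rk_\tr(B)$.

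The main obstacle is making the additivity-and-monotonicity claim for the von Neumann dimension $\tr''$ rigorous without invoking heavy $W^*$-module machinery. The clean way is to replace the module-theoretic talk with an explicit projection computation: let $Q$ be the orthogonal projection of $\ker\pi_\tr(M)$ onto the "$z$-part" subspace, observe that its range, call it $\cK$, satisfies $\cK\subseteq\ker\pi_\tr(B)$ so $\tr''(P_{\cK})\le\tr''(P_{\ker\pi_\tr(B)})$ by monotonicity, while the part of $\ker\pi_\tr(M)$ killed by this projection is exactly $\ker\pi_\tr(A)\oplus\{0\}$; then $\tr''(P_{\ker\pi_\tr(M)})=\tr''(P_{\ker\pi_\tr(A)})+\tr''(P_{\ker\pi_\tr(M)}-P_{\ker\pi_\tr(A)\oplus\{0\}})$ and the second summand equals $\tr''(P_{\cK})$ because the two projections are Murray--von Neumann equivalent in $M_{n'+m'}(\sA''_\tr)$ via the partial isometry implementing the "take $z$-part" map, using $\tr''(VV^*)=\tr''(V^*V)$. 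Assembling the Murray--von Neumann equivalence carefully, and checking that all the projections involved genuinely lie in the matrix algebra over $\sA''_\tr$ (which follows as before from the double commutant theorem applied to the relevant operators), is the only part requiring care; the rest is bookkeeping.
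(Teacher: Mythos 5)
Your proof is correct and is essentially the paper's own argument run on the kernel side instead of the image side: for axiom (iv) the paper decomposes $\overline{\im\,\pi_\tr(D)}$ as $\overline{\im\,\pi_\tr(A)}\oplus H$ and applies the polar decomposition of $QP_H$ (with $Q$ the coordinate projection onto the lower block) together with $\tr''(T^*T)=\tr''(TT^*)$ to get $\tr''(P_H)\ge\rk_\tr(C)$, which is precisely the Murray--von Neumann equivalence you invoke between $P_{\ker\pi_\tr(M)}-P_{\ker\pi_\tr(A)\oplus\{0\}}$ and $P_{\overline{\cK}}$. The other axioms are handled the same way (images versus kernels for (ii), block-diagonal projections for (iii)), and the one point needing explicit mention --- that $\cK$ need not be closed --- is resolved automatically because the final projection $VV^*$ in the polar decomposition is onto the closure of the range, exactly as in the paper's treatment of $Q(H)$ being merely dense in $\overline{\im\,\pi_\tr(C)}$.
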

\begin{proof}
Clearly $\rk_\tr(0)=0$ and $\rk_\tr(1_\sA)=1$. This verifies the condition (i) in Definition~\ref{D-matrix rank}.

Let $A\in M_{n, m}(\sA)$ and $B\in M_{m, k}(\sA)$. Then $\overline{\im \pi_\tr(AB)}\subseteq \overline{\im \pi_\tr(A)}$. Thus $P_{\overline{\im \pi_\tr(AB)}}\le  P_{\overline{\im \pi_\tr(A)}}$, and hence $\rk_\tr(AB)\le \rk_\tr(A)$. We also have
$$ \rk_\tr(AB)\overset{\eqref{E-rank for adjoint}}=\rk_\tr(B^*A^*)\le \rk_\tr(B^*)\overset{\eqref{E-rank for adjoint}}=\rk_\tr(B).$$
This verifies the condition (ii) in Definition~\ref{D-matrix rank}.

For $A\in M_{n, m}(\sA)$ and $B\in M_{k, l}(\sA)$, putting $D= \left[\begin{matrix} A & \\ & B \end{matrix}\right]$, we have
$P_{\overline{\im \pi_\tr(D)}}=P_{\overline{\im \pi_\tr(A)}}+P_{\overline{\im \pi_\tr(B)}}$, and hence
$$\rk_\tr( \left[\begin{matrix} A & \\ & B \end{matrix}\right])=\rk_\tr(A)+\rk_\tr(B).$$
This verifies the condition (iii) in Definition~\ref{D-matrix rank}.

Let $A\in M_{n, m}(\sA), B\in M_{n, k}(\sA)$, and $C\in M_{l, k}(\sA)$. Put $D=\left[\begin{matrix} A & B\\ & C \end{matrix}\right]$. We have $\overline{\im \pi_\tr(A)}\subseteq \overline{\im \pi_\tr(D)}$. Denote by $H$ the orthogonal complement of $\overline{\im \pi_\tr(A)}$ in $\overline{\im \pi_\tr(D)}$, and by $P_H$ the orthogonal projection from $L^2(\sA, \tr)^{(n+l)\times 1}$ to $H$. Then $P_{\overline{\im \pi_\tr(D)}}=P_{\overline{\im \pi_\tr(A)}}+P_H$. Denote by $Q$ the natural projection map
$L^2(\sA, \tr)^{(n+l)\times 1}=L^2(\sA, \tr)^{n\times 1}\oplus L^2(\sA, \tr)^{l\times 1}\rightarrow L^2(\sA, \tr)^{l\times 1}$. Then $Q\in M_{l, n+l}(\sA)$, and  $Q(H)$ is dense in $\overline{\im \pi_\tr(C)}$. The polar decomposition of $QP_H$ gives us a $T\in M_{l, n+l}(\sA''_\tr)$ such that $TT^*=P_{\overline{\im \pi_\tr(C)}}$, and $T^*T$ is an orthogonal projection satisfying $T^*T\le P_H$. Then
$$ \tr''(P_H)\ge \tr''(T^*T)=\tr''(TT^*)=\tr''(P_{\overline{\im \pi_\tr(C)}})=\rk_\tr(C),$$
and hence
$$ \rk_\tr(\left[\begin{matrix} A & B\\ & C \end{matrix}\right])=\tr''(P_{\overline{\im \pi_\tr(D)}})=\tr''(P_{\overline{\im \pi_\tr(A)}})+\tr''(P_H)\ge \rk_\tr(A)+\rk_\tr(C).$$
This verifies the condition (iv) in Definition~\ref{D-matrix rank}.
\end{proof}

\subsection{Twisted crossed product $C^*$-algebras} \label{SS-twisted}

In this subsection we recall some basic facts about twisted crossed product $C^*$-algebras. We refer the reader to \cite{BS, PR89, PR90, PR92} for more information.

Let $\sA$ be a unital $C^*$-algebra.
An element $u\in \sA$ is a {\it unitary} if $u^*u=uu^*=1_\sA$. For each unitary $u\in \sA$, we have the inner automorphism $\Ad(u)$ of $\sA$ sending $a$ to $uau^*$.
Denote by $\Aut(\sA)$ the automorphism group of $\sA$, and by ${\rm U}(\sA)$ the unitary group of $\sA$.

Let $\Gamma$ be a discrete group with identity element $e_\Gamma$. A {\it twisted action} of $\Gamma$ on $\sA$  \cite[Definition 2.1]{BS} is  a pair $(\alpha, u)$ of maps $\alpha: \Gamma\rightarrow \Aut(\sA)$ and $u: \Gamma\times \Gamma\rightarrow {\rm U}(\sA)$ such that
\begin{enumerate}
\item $\alpha_{e_\Gamma}=\id$ and $u_{e_\Gamma, s}=u_{s, e_\Gamma}=1_\sA$ for all $s\in \Gamma$;
\item $\alpha_s\alpha_t=\Ad(u_{s, t})\alpha_{st}$ for all $s, t\in \Gamma$;
\item $\alpha_\gamma(u_{s, t})u_{\gamma, st}=u_{\gamma, s}u_{\gamma s, t}$ for all $\gamma, s, t\in \Gamma$.
\end{enumerate}

Let $(\alpha, u)$ be a twisted action of $\Gamma$ on $\sA$. Define $\sA*\Gamma$ as the space of finitely supported functions $f: \Gamma\rightarrow \sA$. We shall write $f\in \sA*\Gamma$ as $\sum_{s\in \Gamma} f_s \bar{s}$. Then $\sA*\Gamma$ is a $*$-algebra with the algebraic operations defined by
\begin{align*}
\sum_{s\in \Gamma} f_s \bar{s}+\sum_{s\in \Gamma} g_s \bar{s} &=\sum_{s\in\Gamma}(f_s+g_s)\bar{s}, \\
(\sum_{s\in \Gamma} f_s \bar{s})(\sum_{t\in \Gamma} g_t \bar{t})&=\sum_{s, t\in \Gamma}f_s \alpha_s(g_t)u_{s, t}\overline{st},\\
(\sum_{s\in \Gamma}f_s \bar{s})^*&=\sum_{s\in \Gamma}u_{s^{-1}, s}^*\alpha_{s^{-1}}(f_s^*)\overline{s^{-1}}.
\end{align*}
Clearly $\sA*\Gamma$ is a twisted crossed product in the sense of Section~\ref{SS-crossed product}.

Define a seminorm on $\sA*\Gamma$ by
\begin{align} \label{E-norm}
\big\|f\big\|=\sup_\pi \big\|\pi(f)\big\|
\end{align}
for $\pi$ ranging over all unital $*$-representations of $\sA*\Gamma$ on Hilbert spaces. As we shall see below, this is actually a norm on $\sA*\Gamma$. The {\it maximal twisted crossed product $C^*$-algebra} for $(\alpha, u)$ is the completion of $\sA*\Gamma$ under this norm, and will be denoted  by $\sA\rtimes_{\alpha, u}\Gamma$.

Let $\pi: \sA\rightarrow \cB(H)$ be a unital $*$-representation of $\sA$ on a Hilbert space $H$. Denote by $\ell^2(\Gamma, H)$ the space of all functions $x: \Gamma\rightarrow H$ satisfying $\sum_{t\in \Gamma}\|x_t\|^2<+\infty$. We shall write $x\in \ell^2(\Gamma, H)$ as $\sum_{t\in \Gamma} x_t t$. Then $\ell^2(\Gamma, H)$  is a Hilbert space equipped with the inner product
$$\left<\sum_{t\in \Gamma} x_t t, \sum_{t\in \Gamma} y_t t\right>:=\sum_{t\in \Gamma}\left<x_t, y_t\right>$$
and the associated norm $\big\|\sum_{t\in \Gamma} x_t t\big\|=(\sum_{t\in \Gamma}\|x_t\|^2)^{1/2}$. For each $\sum_{s\in \Gamma} f_s s\in \sA*\Gamma$, define $\pi_{\alpha, u}(\sum_{s\in \Gamma} f_s \bar{s})\in \cB(\ell^2(\Gamma, H))$ by
$$ \pi_{\alpha, u}(\sum_{s\in \Gamma} f_s \bar{s})(\sum_{t\in \Gamma}x_t t)=\sum_{s, t\in \Gamma} \pi(\alpha_{(st)^{-1}}(f_s)u_{(st)^{-1}, s})(x_t)(st).$$
Then $\pi_{\alpha, u}$ is a unital $*$-representation of $\sA*\Gamma$ on $\ell^2(\Gamma, H)$.
When $\pi$ is injective (such $\pi$ always exists), $\pi_{\alpha, u}$ is also injective and hence \eqref{E-norm} does define a norm. In general, $\pi_{\alpha, u}$ extends to a unital $*$-representation of $\sA\rtimes_{\alpha, u}\Gamma$ on $\ell^2(\Gamma, H)$, which we still denote by $\pi_{\alpha, u}$.

Take an injective unital $*$-representation $\pi$ of $\sA$ on some Hilbert space $H$. Denote by $T$ the isometric embedding $H\rightarrow \ell^2(\Gamma, H)$ sending $x$ to $x e_\Gamma$. Then $T^*\pi_{\alpha, u}( \sA\rtimes_{\alpha, u}\Gamma)T=\pi(\sA)$. For each $a\in \sA\rtimes_{\alpha, u}\Gamma$, denote by $\cE(a)$ the unique element in $\sA$ satisfying $T^*\pi_{\alpha, u}(a)T=\pi(\cE(a))$. Then $\cE$ is a unital positive linear map from $\sA\rtimes_{\alpha, u}\Gamma$ to $\sA$ satisfying $\cE(a \overline{e_\Gamma})=a$ for all $a\in \sA$
and $\cE(a \bar{s})=0$ for all $a\in \sA$ and $s\in \Gamma\setminus \{e_\Gamma\}$. In particular, $\cE$ does not depend on the choice of $\pi$.

\section{Amenable extensions} \label{S-amenable extension}

In this section we define amenable extensions and give some basic examples.

A unital ring $R$ is said to have {\it invariant basis number} (IBN) or {\it unique rank property} (URP) if for any distinct $n, m\in \Nb$, the left $R$-modules $R^n$ and $R^m$ are not isomorphic, or equivalently, the right $R$-modules $R^n$ and $R^m$ are not isomorphic \cite[page 3]{Lam}. When $R$ has IBN, for any finitely generated free left $R$-module $\cM$ we write $\dim(\cM)$ for the nonnegative integer satisfying $\cM\cong R^{\dim(\cM)}$.
Note that if $R$ has a Sylvester matrix rank function, then it has IBN.

Let $R$ be a unital ring and $S$ be a unital ring containing $R$ with $1_R=1_S$.

\begin{definition} \label{D-amenable}
Assume that $R$ has IBN.
A collection $\cF$ of left $R$-submodules of $S$ is called a {\it finite approximation system} if the following conditions hold:
\begin{enumerate}
\item Each $\cW\in \cF$ is a finitely generated free left $R$-module.
\item For any $\cW, \cV\in \cF$, one has $\cW\cap \cV, \cW+\cV\in \cF$.
\item For any $\cW, \cV\in \cF$ satisfying $\cV\subseteq \cW$, one has $\cW=\cV\oplus \cV'$ for some $\cV'\in \cF$. In particular, $\dim(\cV)\le \dim(\cW)$.
\item Every finitely generated left $R$-submodule of $S$ is contained in some $\cW\in \cF$.
\end{enumerate}
Denote by $\hat{\cF}$ the set of nonzero elements in $\cF$.
We say that $S$ is  a {\it  right amenable extension} of $R$ with finite approximation system $\cF$ if furthermore
\begin{enumerate}
\item[(v)] For any finite subset $V$ of $S$ and $\varepsilon>0$ there is some $\cW\in \hat{\cF}$ which is $(V, \varepsilon)$-invariant in the sense that one has $\dim(\tilde{\cW})\le (1+\varepsilon)\dim(\cW)$ for some $\tilde{\cW}\in \cF$ containing $\cW+\cW V$.
\end{enumerate}
\end{definition}

\begin{example} \label{E-group ring}
Let $R$ be a field $\Kb$, $\Gamma$ a discrete group, and $S$ the group ring $\Kb \Gamma$.
For each finite set $F\subseteq \Gamma$ denote by $\Kb F$ the set of elements in $\Kb \Gamma$ with support contained in $F$.
Denote by $\cF$ the set  of $\Kb F$ for $F$ ranging over finite subsets of $\Gamma$. Then $\cF$ is a finite approximation system for $\Kb \Gamma$. Clearly $\Kb\Gamma$ is a right amenable extension of $\Kb$ with this $\cF$ if and only if $\Gamma$ is amenable.
\end{example}

\begin{example} \label{E-crossed product}
Let $\Gamma$ be a discrete group and let $R*\Gamma=\bigoplus_{s\in \Gamma}V_s$  be a twisted crossed product as in Section~\ref{SS-crossed product}.
Assume that $R$ has IBN. Let $\cF$ be the set of $R$-modules of the form $\sum_{s\in F}V_s$ where $F$ is a finite subset of $\Gamma$. Then
$\cF$ is a finite approximation system for $R*\Gamma$.
It is easily checked that  the ring $R*\Gamma$ is a right amenable extension of $R$ with the above $\cF$ if and only if $\Gamma$ is amenable.
\end{example}

\begin{example} \label{E-k alg}
Let $\Kb$ be a field and $S$ a unital $\Kb$-algebra.
Let $\cF$ be the set of all finite-dimensional $\Kb$-linear subspaces of $S$. Then $\cF$ is a finite approximation system for $S$.
If $S$ is right amenable over $\Kb$ in the sense of Gromov and Elek as in Section~\ref{SS-amenable}, then clearly $S$ is a right amenable extension of $\Kb$ with the above $\cF$. When $S$ is a domain, the converse also holds.
\end{example}

\begin{example} \label{E-field extension}
Let $\Kb$ be a field and $\Eb$ a field containing $\Kb$. Let $R$ be a $\Kb$-algebra with IBN.
Denote by $\cF$ the set of $V\otimes_\Kb R$ for $V$ ranging over finite-dimensional $\Kb$-linear subspaces of $\Eb$. Then $\cF$ is a finite approximation system for $\Eb\otimes_{\Kb} R$. Since $\Eb$ is right amenable over $\Kb$, $\Eb\otimes_{\Kb} R$ is a right amenable extension of $R$ with this  $\cF$.
\end{example}

We shall use the following elementary lemma a few times.

\begin{lemma} \label{L-additivity for standard}
Let $\cF$ be a finite approximation system. The following are true.
\begin{enumerate}
\item For any $\cV, \cW\in \cF$, we have
$$\dim(\cV+\cW)+\dim(\cV\cap \cW)=\dim(\cV)+\dim(\cW).$$
\item For any $\cW_1, \dots, \cW_n, \cW\in \cF$ with $\cW_1, \dots, \cW_n\subseteq \cW$, one has
$$\dim(\cW)-\dim\big(\bigcap_{j=1}^n\cW_j\big)\le \sum_{j=1}^n(\dim(\cW)-\dim(\cW_j)).$$
\end{enumerate}
\end{lemma}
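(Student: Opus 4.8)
The plan is to establish the submodularity identity (i) by producing explicit internal direct-sum decompositions inside $\cF$, and then to deduce the union-type bound (ii) from (i) by induction on $n$.

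For (i) I would first record the basic additivity of $\dim$ on decompositions coming from condition (iii) of Definition~\ref{D-amenable}: if $\cV, \cW\in\cF$ with $\cV\subseteq\cW$, then $\cW=\cV\oplus\cV'$ for some $\cV'\in\cF$, and since $\cW$, $\cV$, $\cV'$ are all finitely generated free and $R$ has IBN, the isomorphisms $R^{\dim\cW}\cong\cW\cong\cV\oplus\cV'\cong R^{\dim\cV+\dim\cV'}$ force $\dim\cW=\dim\cV+\dim\cV'$; in particular $\dim$ is monotone on $\cF$. Now, given arbitrary $\cV,\cW\in\cF$, note $\cV\cap\cW\in\cF$ by (ii), so applying (iii) to $\cV\cap\cW\subseteq\cW$ gives a complement $\cW'\in\cF$ with $\cW=(\cV\cap\cW)\oplus\cW'$ and $\dim\cW=\dim(\cV\cap\cW)+\dim\cW'$. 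Since $\cV\cap\cW\subseteq\cV$ we have $\cV+\cW=\cV+\cW'$, and $\cV\cap\cW'\subseteq(\cV\cap\cW)\cap\cW'=\{0\}$, so the sum $\cV+\cW=\cV\oplus\cW'$ is internal and direct. As $\cV+\cW\in\cF$ by (ii) and $\cV,\cW'$ are free, the same IBN comparison gives $\dim(\cV+\cW)=\dim\cV+\dim\cW'$. Adding, $\dim(\cV+\cW)+\dim(\cV\cap\cW)=\dim\cV+\dim\cW'+\dim(\cV\cap\cW)=\dim\cV+\dim\cW$.

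For (ii) I would induct on $n$, the case $n=1$ being the trivial equality. For the step, put $\cW'=\bigcap_{j=1}^{n-1}\cW_j$, which lies in $\cF$ by (ii); the inductive hypothesis gives $\dim\cW-\dim\cW'\le\sum_{j=1}^{n-1}(\dim\cW-\dim\cW_j)$. Applying (i) to $\cW'$ and $\cW_n$, and using that $\cW'+\cW_n\in\cF$ sits inside $\cW$ so that $\dim(\cW'+\cW_n)\le\dim\cW$ by monotonicity, we get $\dim(\cW'\cap\cW_n)=\dim\cW'+\dim\cW_n-\dim(\cW'+\cW_n)\ge\dim\cW'+\dim\cW_n-\dim\cW$. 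Since $\bigcap_{j=1}^n\cW_j=\cW'\cap\cW_n$, rearranging yields $\dim\cW-\dim(\bigcap_{j=1}^n\cW_j)\le(\dim\cW-\dim\cW')+(\dim\cW-\dim\cW_n)\le\sum_{j=1}^n(\dim\cW-\dim\cW_j)$, which closes the induction.

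The only point requiring any care is the repeated step that turns an internal direct-sum decomposition of members of $\cF$ into the numerical identity for $\dim$; this is exactly where the IBN hypothesis enters, and it already underlies the ``in particular'' clause of Definition~\ref{D-amenable}(iii). Beyond that, the proof is pure bookkeeping, and the main thing to watch is that every module to which one applies (i) or (iii) genuinely belongs to $\cF$, which is guaranteed by the closure properties (ii) and (iii). I do not anticipate a real obstacle here.
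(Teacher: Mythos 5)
Your proof is correct and follows essentially the same route as the paper's: for (i) the paper splits both $\cV$ and $\cW$ over $\cV\cap\cW$ to get $\cV+\cW=\cV'\oplus(\cV\cap\cW)\oplus\cW'$, whereas you split only $\cW$ and write $\cV+\cW=\cV\oplus\cW'$, a trivial variant; and your induction for (ii) is the same computation via (i) and monotonicity.
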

\begin{proof} (i). We have $\cW=\cW'\oplus (\cV\cap \cW)$ for some $\cW'\in \cF$ and $\cV=\cV'\oplus (\cV\cap \cW)$ for some $\cV'\in \cF$.
Clearly $\cV+\cW=\cV'\oplus (\cV\cap \cW)\oplus \cW'$. Thus
\begin{align*}
 \dim(\cV+\cW)+\dim(\cV\cap \cW)&=\dim(\cV')+\dim(\cV\cap \cW)+\dim(\cW')+\dim(\cV\cap \cW)\\
 &=\dim(\cV)+\dim(\cW).
 \end{align*}

 (ii). We argue by induction on $n$. This is trivial when $n=1$. Suppose that it holds for $n$. Let $\cW_1, \dots, \cW_n, \cW_{n+1}, \cW\in \cF$ with $\cW_1, \dots, \cW_n, \cW_{n+1}\subseteq \cW$. Then
 \begin{align*}
 &\dim(\cW)-\dim\big(\bigcap_{j=1}^{n+1}\cW_j\big)\\
 &=\dim(\cW)-\dim\big(\bigcap_{j=1}^n\cW_j\big)-\dim(\cW_{n+1})+\dim\big(\cW_{n+1}+\bigcap_{j=1}^n\cW_j\big)\\
 &\le \dim(\cW)-\dim\big(\bigcap_{j=1}^n\cW_j\big)-\dim(\cW_{n+1})+\dim(\cW)\\
 &\le \sum_{j=1}^{n+1}(\dim(\cW)-\dim(\cW_j)),
 \end{align*}
 where in the equality we apply part (i) and in the last inequality we apply the inductive hypothesis.
\end{proof}

\section{Sylvester matrix rank function for amenable extensions} \label{S-rank for amenable extension}

In this section we give the Ara-O'Meara-Perera construction of Sylvester matrix rank functions for amenable extensions using ultralimits in \cite{AOP}.
They did the construction for twisted crossed products in Example~\ref{E-crossed product}, but their method works for general case easily.

Let $R$ be a unital ring. Let $\rk$ be a Sylvester matrix rank function for $R$, and $\dim(\cdot|\cdot)$ the corresponding bivariant Sylvester module rank function for $R$. Let $S$ be a unital ring containing $R$ with $1_R=1_S$, and let $\cF$ be a finite approximation system.

Let $A\in M_{n, m}(S)$ and $\cW\in \hat{\cF}$. By the condition (iv) of Definition~\ref{D-amenable} there is some $\tilde{\cW}\in \hat{\cF}$ such that $\cW A_{i, j}\subseteq \tilde{\cW}$ for all $1\le i\le n$ and $1\le j\le m$. Then we have $\cW^nA\subseteq \tilde{\cW}^m$. Take an $R$-basis $w_1, \dots, w_l$ for $\cW^n$, and an $R$-basis $\tilde{w}_1, \dots, \tilde{w}_p$ for $\tilde{\cW}^m$. Then we have
\begin{eqnarray} \label{E-matrix}
\left[\begin{matrix} w_1A \\ \vdots \\ w_lA \end{matrix}\right]=B\left[\begin{matrix} \tilde{w}_1\\ \vdots\\ \tilde{w}_p \end{matrix}\right]
 \end{eqnarray}
for some $B\in M_{l, p}(R)$. Note that $\rk(B)$ does not depend on the choices of the bases $w_1, \dots, w_l$ and $\tilde{w}_1, \dots, \tilde{w}_p$.
In fact, by Lemma~\ref{L-matrix vs module} below $\rk(B)$ does not depend on the choice of $\tilde{\cW}$ either.
Thus we may define
$$\rk_{\cW}(A)=\rk(B).$$

\begin{lemma} \label{L-matrix vs module}
Let $A\in M_{n, m}(S)$ and $\cW, \tilde{\cW}\in \hat{\cF}$ such that $\cW A_{i, j}\subseteq \tilde{\cW}$ for all $1\le i\le n$ and $1\le j\le m$.
If we take an $R$-basis $w_1, \dots, w_l$ for $\cW^n$ and an $R$-basis $\tilde{w}_1, \dots, \tilde{w}_p$ for $\tilde{\cW}^m$, and define $B\in M_{l, p}(R)$ via \eqref{E-matrix}, then
$$ \rk(B)=\dim(\cW^nA|S^m).$$
\end{lemma}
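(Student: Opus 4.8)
The plan is to relate the matrix rank $\rk(B)$ defined via \eqref{E-matrix} to the bivariant Sylvester module rank function applied to the pair $\cW^n A \subseteq S^m$. First I would observe that the correspondence between $\rk$ and $\dim(\cdot|\cdot)$ gives $\rk(B) = \dim(R^l B \mid R^p)$, where $R^l B$ denotes the row space of $B$ inside $R^p$. So the task reduces to showing
$$\dim(R^l B \mid R^p) = \dim(\cW^n A \mid S^m).$$
The natural bridge is the free $R$-module isomorphism $\varphi: R^p \to \tilde{\cW}^m$ sending the standard basis to $\tilde w_1,\dots,\tilde w_p$; by construction of $B$ in \eqref{E-matrix}, $\varphi$ carries $R^l B$ onto $\cW^n A$ (the $i$-th row of $B$, viewed in $R^p$, maps to $w_i A$, and the $w_i A$ span $\cW^n A$ over $R$). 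Since $\dim(\cdot|\cdot)$ is an isomorphism invariant (Definition~\ref{D-bivariant}(i)), we get $\dim(R^l B \mid R^p) = \dim(\cW^n A \mid \tilde{\cW}^m)$.

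**Replacing $\tilde{\cW}^m$ by $S^m$.** The remaining point is that $\dim(\cW^n A \mid \tilde{\cW}^m) = \dim(\cW^n A \mid S^m)$, i.e. the ambient module can be enlarged from $\tilde{\cW}^m$ all the way to $S^m$ without changing the value. Here I would invoke Definition~\ref{D-bivariant}(v): since $\cW^n A$ is finitely generated (it is generated by the finitely many $w_i A$), we have $\dim(\cW^n A \mid S^m) = \inf_{\cM'} \dim(\cW^n A \mid \cM')$ over finitely generated $R$-submodules $\cM'$ of $S^m$ containing $\cW^n A$. So it suffices to show $\dim(\cW^n A \mid \cM') = \dim(\cW^n A \mid \tilde{\cW}^m)$ for all such $\cM'$; equivalently, by property (v) applied twice (enlarging both $\tilde{\cW}^m$ and $\cM'$ into a common finitely generated module containing both), it is enough to prove: if $\cW^n A \subseteq \cN_1 \subseteq \cN_2$ with all finitely generated, then $\dim(\cW^n A \mid \cN_1) = \dim(\cW^n A \mid \cN_2)$. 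This is exactly the kind of statement that follows from property (v) together with the fact that $\dim(\cW^n A\mid \cN)$ is monotone decreasing in $\cN$ (immediate from (v), or from Theorem~\ref{T-bivariant}(1)): given any finitely generated $\cN_2 \supseteq \tilde{\cW}^m$, by (v) again $\dim(\cW^n A\mid \cN_2)$ can only be $\le \dim(\cW^n A\mid \tilde{\cW}^m)$, and conversely one uses that $\tilde{\cW}^m$ itself appears as one of the $\cM_2'$ in the infimum over modules containing $\cW^n A$ inside $\cN_2$, so no further shrinking is possible. Assembling these gives $\dim(\cW^n A \mid \tilde{\cW}^m) = \dim(\cW^n A \mid S^m)$.

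**The main obstacle.** The genuinely delicate step is the second one — justifying that enlarging the ambient module from $\tilde{\cW}^m$ to all of $S^m$ leaves the rank unchanged, and in particular that the infimum in Definition~\ref{D-bivariant}(v) is attained already at $\tilde{\cW}^m$. The content being exploited is that $\tilde{\cW}^m$ is a finitely generated free $R$-module that \emph{contains} $\cW^n A$, combined with the monotonicity $\dim(\cW^n A\mid \cN_1)\ge \dim(\cW^n A\mid \cN_2)$ for $\cN_1\subseteq \cN_2$; one must check this monotonicity carefully (it is a consequence of Theorem~\ref{T-bivariant}(1): $\dim(\cN_1\mid \cN_2)=\dim(\cW^nA\mid\cN_2)+\dim(\cN_1/\cW^nA\mid \cN_2/\cW^nA)\ge \dim(\cW^nA\mid\cN_2)$, while $\dim(\cW^nA\mid\cN_1)=\dim(\cW^nA\mid\cN_2)$ when... ) — so in fact the cleaner route is: monotonicity gives $\dim(\cW^nA\mid S^m)\le \dim(\cW^nA\mid\tilde{\cW}^m)$ directly, and for the reverse inequality note that in the defining infimum $\dim(\cW^nA\mid S^m)=\inf_{\cM'}\dim(\cW^nA\mid\cM')$ the module $\tilde{\cW}^m$ is one admissible choice of $\cM'$, while for any other admissible $\cM'$, both $\cM'$ and $\tilde{\cW}^m$ sit inside the finitely generated module $\cM'+\tilde{\cW}^m$, and monotonicity forces $\dim(\cW^nA\mid\cM')\ge \dim(\cW^nA\mid\cM'+\tilde{\cW}^m)=\dim(\cW^nA\mid\tilde{\cW}^m)$ — the last equality because $\tilde{\cW}^m\subseteq \cM'+\tilde{\cW}^m$ with $\cM'+\tilde{\cW}^m$ finitely generated, and by property (v) restricted to submodules of $\cM'+\tilde{\cW}^m$ containing $\cW^nA$, of which $\tilde{\cW}^m$ is minimal in the relevant sense. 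Making this last clause fully rigorous — identifying why $\tilde{\cW}^m$ cannot be shrunk — is where the real work lies, and it ultimately rests on $\tilde{\cW}^m$ being free over $R$ so that $\dim(\tilde{\cW}^m)=p$ is an integer with no slack below it; I would spell that out using Definition~\ref{D-bivariant}(vi) and the fact that a proper inclusion of finitely generated submodules of a free module strictly drops this integer-valued invariant only when it should, ensuring the infimum is realized at $\tilde{\cW}^m$.
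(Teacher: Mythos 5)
Your first step is fine and matches the paper: the isomorphism $R^p\cong\tilde{\cW}^m$ carrying the standard basis to $\tilde w_1,\dots,\tilde w_p$ sends $R^lB$ onto $\cW^nA$, so isomorphism invariance gives $\rk(B)=\dim(R^lB|R^p)=\dim(\cW^nA|\tilde{\cW}^m)$. The gap is in the second step. You correctly reduce to showing that $\dim(\cW^nA|\cM')\ge\dim(\cW^nA|\tilde{\cW}^m)$ for every finitely generated $\cM'\supseteq\cW^nA$, and you correctly note that monotonicity in the ambient module only gives inequalities in the wrong direction, so that what you really need is $\dim(\cW^nA|\cM'+\tilde{\cW}^m)=\dim(\cW^nA|\tilde{\cW}^m)$, i.e.\ that enlarging the ambient module beyond $\tilde{\cW}^m$ does not decrease the value. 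But your justification of this --- that $\tilde{\cW}^m$ is ``minimal in the relevant sense'' because it is free with integer rank $p$ and ``a proper inclusion strictly drops this invariant only when it should'' --- is not an argument. For a general pair $\cN_1\subseteq\cN_2$ with $\cN_1$ free and finitely generated, $\dim(\cM|\cN_2)$ can be strictly smaller than $\dim(\cM|\cN_1)$; that is precisely why condition (v) of Definition~\ref{D-bivariant} is an infimum rather than an evaluation at any one module. Freeness of $\tilde{\cW}^m$ alone buys you nothing here.

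The missing ingredient is the structure of the finite approximation system. The equality $\dim(\cM|\cN_2)=\dim(\cM|\cN_1)$ for $\cM\subseteq\cN_1\subseteq\cN_2$ does hold when $\cN_1$ is a \emph{direct summand} of $\cN_2$, by the additivity condition (iii) of Definition~\ref{D-bivariant}: if $\cN_2=\cN_1\oplus\cN_1'$ then $\dim(\cM|\cN_2)=\dim(\cM\oplus\{0\}|\cN_1\oplus\cN_1')=\dim(\cM|\cN_1)+\dim(\{0\}|\cN_1')=\dim(\cM|\cN_1)$. The paper exploits this as follows: by condition (iv) of Definition~\ref{D-amenable} the modules $\cV^m$ with $\cV\in\hat{\cF}$ and $\cW^nA\subseteq\cV^m$ are cofinal among finitely generated $\cM'\supseteq\cW^nA$, so the infimum in (v) may be taken over these; and by conditions (ii) and (iii) of Definition~\ref{D-amenable}, both $\cV^m$ and $\tilde{\cW}^m$ are direct summands of $(\tilde{\cW}+\cV)^m$, whence $\dim(\cW^nA|\cV^m)=\dim(\cW^nA|(\tilde{\cW}+\cV)^m)=\dim(\cW^nA|\tilde{\cW}^m)$ for every such $\cV$. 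Your sketch never invokes the direct-summand property of $\cF$ (nor the cofinality of the $\cV^m$), and without it the key equality you need cannot be established.
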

\begin{proof} Clearly $\cW^nA$ is a finitely generated left $R$-submodule of $\tilde{\cW}^m$, and
$$\rk(B)=\dim(\cW^n A|\tilde{\cW}^m).$$
For any $\cV\in \hat{\cF}$ satisfying $\cW^n A\subseteq \cV^m$, by the conditions (ii) and (iii) of Definition~\ref{D-amenable} we know that both $\cV^m$ and $\tilde{\cW}^m$ are direct summands of $(\tilde{\cW}+\cV)^m$ as left $R$-modules, and hence
$$\dim(\cW^n A|\cV^m)=\dim(\cW^n A|(\tilde{\cW}+\cV)^m)=\dim(\cW^n A|\tilde{\cW}^m).$$
By the condition (iv) of Definition~\ref{D-amenable} every finitely generated left $R$-submodule of $S^m$ is contained in $\cV^m$ for some $\cV\in \hat{\cF}$. Therefore
\begin{align*}
\dim(\cW^n A|S^m)&=\inf_\cM\dim(\cW^n A|\cM)\\
&=\inf_{\cV\in \hat{\cF}, \cW^n A\subseteq \cV^m}\dim(\cW^nA|\cV^m)\\
&=\dim(\cW^n A|\tilde{\cW}^m)=\rk(B),
\end{align*}
where in the first line $\cM$ ranges over all finitely generated left $R$-submodules of $S^m$ containing $\cW^n A$.
\end{proof}

We record some basic properties of $\rk_\cW$ in the following obvious lemma which we leave for the reader to check.

\begin{lemma} \label{L-rank on finite}
Let $\cW\in \hat{\cF}$. The following are true.
\begin{enumerate}
\item $\rk_\cW(0)=0$ and $\rk_\cW(1_S)=\dim(\cW)$.
\item For any $A\in M_{n, m}(S)$ and $B\in M_{m, l}(S)$, taking $\tilde{\cW}\in \hat{\cF}$ with $\cW A_{i, j}\subseteq \tilde{\cW}$ for all $1\le i\le n$ and $1\le j\le m$, one has $\rk_\cW(AB)\le \min(\rk_\cW(A), \rk_{\tilde{\cW}}(B))$.
\item $\rk_\cW(\left[\begin{matrix} A & \\ & B \end{matrix}\right])=\rk_\cW(A)+\rk_\cW(B)$.
\item $\rk_\cW(\left[\begin{matrix} A & C\\ & B \end{matrix}\right])\ge \rk_\cW(A)+\rk_\cW(B)$.
\item For any $\cV\in \cF$ with $\cW\subseteq \cV$ and $A\in M_{n, m}(S)$, one has
$$\rk_\cW(A)\le \rk_\cV(A)\le \rk_\cW(A)+n(\dim(\cV)-\dim(\cW)).$$
\end{enumerate}
\end{lemma}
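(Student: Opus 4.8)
The cornerstone of the plan is the identity $\rk_\cW(A)=\dim(\cW^nA\,|\,S^m)$ supplied by Lemma~\ref{L-matrix vs module}, which converts each of (i)--(v) into an assertion about the bivariant Sylvester module rank function $\dim(\cdot\,|\,\cdot)$ that can be read off from Theorem~\ref{T-bivariant} together with Definition~\ref{D-bivariant}. Throughout I would use two monotonicity properties that follow immediately from those axioms: $\dim(\cM_1\,|\,\cM_2)$ is nondecreasing in $\cM_1$ (apply Theorem~\ref{T-bivariant}(1) to a chain $\cM_1\subseteq\cM_1'\subseteq\cM_2$) and, when $\cM_1$ is finitely generated, nonincreasing in the ambient module $\cM_2$ (immediate from Definition~\ref{D-bivariant}(5), and Definition~\ref{D-bivariant}(4) then disposes of the finiteness hypothesis); also $\dim(R^k)=k$, hence $\dim((\cW')^n)=n\dim(\cW')$ for any free module $\cW'$, by iterating Definition~\ref{D-bivariant}(2)--(3).

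Item (i) is read off the defining relation \eqref{E-matrix}: for $A=0$ one may take the associated matrix $B=0$, and for $A=1_S$ (so $n=m=1$) one may take $\tilde{\cW}=\cW$ with the same $R$-basis, so that $B=I_{\dim(\cW)}$; thus $\rk_\cW(0)=0$ and $\rk_\cW(1_S)=\dim(\cW)$. For (iii) I would fix a single $\tilde{\cW}\in\hat{\cF}$ absorbing $\cW A_{i,j}$ and $\cW B_{i,j}$ simultaneously (possible by Definition~\ref{D-amenable}(iv)); then $\cW^{n+k}\left[\begin{smallmatrix}A&\\&B\end{smallmatrix}\right]=\cW^nA\oplus\cW^kB$ inside $S^m\oplus S^l$, and the claim is exactly Definition~\ref{D-bivariant}(3). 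For (iv), with such a $\tilde{\cW}$ (now also absorbing $\cW C_{i,j}$) the matrix produced by \eqref{E-matrix} for $\left[\begin{smallmatrix}A&C\\&B\end{smallmatrix}\right]$ is block upper triangular with diagonal blocks the matrices for $A$ and $B$, so the inequality is axiom (iv) of Definition~\ref{D-matrix rank} applied to $\rk$; alternatively, one notes the inclusion $\{0\}\oplus\cW^kB\subseteq\cW^{n+k}\left[\begin{smallmatrix}A&C\\&B\end{smallmatrix}\right]$, applies Theorem~\ref{T-bivariant}(1) to this chain inside $S^{m+l}$, and bounds the quotient term below using the coordinate projection $S^{m+l}\to S^m$ and Theorem~\ref{T-bivariant}(3).

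For (ii), with $\tilde{\cW}$ as in the statement one has $\cW^nA\subseteq\tilde{\cW}^m$, hence $\cW^nAB\subseteq\tilde{\cW}^mB$, so monotonicity in the first variable gives $\rk_\cW(AB)=\dim(\cW^nAB\,|\,S^l)\le\dim(\tilde{\cW}^mB\,|\,S^l)=\rk_{\tilde{\cW}}(B)$; and since right multiplication by $B$ is a left $R$-module homomorphism $S^m\to S^l$ carrying $\cW^nA$ onto $\cW^nAB$, Theorem~\ref{T-bivariant}(3) combined with monotonicity in the ambient module gives $\rk_\cW(AB)\le\dim(\cW^nA\,|\,S^m)=\rk_\cW(A)$. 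In (v) the left inequality is once more monotonicity in the first variable, since $\cW\subseteq\cV$ forces $\cW^nA\subseteq\cV^nA$. For the right inequality, Definition~\ref{D-amenable}(iii) lets me write $\cV=\cW\oplus\cW'$ with $\cW'\in\cF$ and $\dim(\cW')=\dim(\cV)-\dim(\cW)$; then $\cV^nA=\cW^nA+(\cW')^nA$, so the submodularity inequality Theorem~\ref{T-bivariant}(2) gives $\rk_\cV(A)\le\rk_\cW(A)+\dim((\cW')^nA\,|\,S^m)$, and finally $\dim((\cW')^nA\,|\,S^m)\le\dim((\cW')^n\,|\,S^n)\le\dim((\cW')^n)=n\dim(\cW')$ by Theorem~\ref{T-bivariant}(3) (for right multiplication by $A$, viewed as $S^n\to S^m$) and monotonicity in the ambient module.

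The one step I expect to demand genuine care is the right-hand inequality of (v): it is the sole place where the combinatorial axiom Definition~\ref{D-amenable}(iii) --- complementability of a submodule inside an element of $\cF$ --- enters, and where several ingredients (submodularity of $\dim(\cdot\,|\,\cdot)$, both monotonicity properties, and the image estimate under right multiplication) must be chained. Item (iv) is the next most delicate, because of the passage to the quotient $S^{m+l}/(\{0\}\oplus\cW^kB)$ --- or, in the matrix version, because one must verify that a \emph{single} choice of $\tilde{\cW}$ really makes the defining matrix block upper triangular --- but once the inclusion $\{0\}\oplus\cW^kB\subseteq\cW^{n+k}\left[\begin{smallmatrix}A&C\\&B\end{smallmatrix}\right]$ is in hand it is routine. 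Everything else is a direct transcription of the module-rank axioms, which is presumably why the authors leave it to the reader.
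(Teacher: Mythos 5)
Your proof is correct. The paper gives no argument here --- it explicitly declares this an ``obvious lemma which we leave for the reader to check'' --- and your verification via the identity $\rk_\cW(A)=\dim(\cW^nA\,|\,S^m)$ from Lemma~\ref{L-matrix vs module}, together with the axioms in Definition~\ref{D-bivariant} and Theorem~\ref{T-bivariant}, is exactly the kind of routine check the authors intend (the matrix-level alternatives you sketch for (iii) and (iv), reducing to Definition~\ref{D-matrix rank} applied to $\rk$ on the block matrices produced by \eqref{E-matrix}, work equally well). All the individual steps --- the two monotonicity properties, the direct-sum additivity in (iii), the quotient/projection argument in (iv), and the chain of estimates for the right-hand inequality of (v) --- check out.
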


Recall that for a nonempty set $J$, a nonempty family $\omega$ of subsets of $J$ is called a {\it filter} if it is closed under taking finite intersections, $\emptyset\not\in J$,  and for any $X\in \omega$ and $X\subseteq Y\subseteq J$ one has $Y\in\omega$. An {\it ultrafilter} on $J$ is a maximal proper filter $\omega$, i.e. for any proper filter $\omega'$ on $J$ containing $\omega$, one has $\omega=\omega'$. Given any ultrafilter $\omega$ on $J$ and any map $f$ from $J$ to a compact Hausdorff space $Z$, there is a unique $z_0\in Z$ such that for every neighborhood $U$ of $z_0$ in $Z$, the set $f^{-1}(U)$ is in $\omega$. We shall write
$z_0$ as $\lim_{j\to \omega}f(j)$.

Now we assume that $S$ is a right amenable extension of $R$ with $\cF$.
We say that an ultrafilter $\omega$ on $\hat{\cF}$ is {\it non-principal} if for any finite subset $V$ of $S$ and any $\varepsilon>0$, the set of all $(V, \varepsilon)$-invariant $\cW\in \hat{\cF}$ is an element of $\omega$. By Zorn's lemma there are non-principal ultrafilters on $\hat{\cF}$. Fix a non-principal ultrafilter $\omega$ on $\hat{\cF}$.

The following lemma is an immediate consequence of Lemma~\ref{L-rank on finite}.(v).

\begin{lemma} \label{L-boundary}
Let $V$ be a finite subset of $S$. For each $\cW\in \hat{\cF}$ take a $\tilde{\cW}_V\in \hat{\cF}$ with smallest $\dim(\tilde{\cW}_V)$ such that $\cW+\cW V\subseteq \tilde{\cW}_V$. Then
$$\lim_{\cW\rightarrow \omega} \frac{\dim(\tilde{\cW}_V)}{\dim(\cW)}=1 \quad \mbox{ and } \quad \lim_{\cW\rightarrow \omega} \frac{\rk_{\tilde{\cW}_V}(A)-\rk_\cW(A)}{\dim(\cW)}=0 $$
for every $A\in M_{n, m}(S)$.
\end{lemma}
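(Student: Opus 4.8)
The plan is to deduce both limits directly from the defining property of a non-principal ultrafilter on $\hat{\cF}$ together with Lemma~\ref{L-rank on finite}.(v); no new idea is needed beyond carefully matching the $(V,\varepsilon)$-invariance condition to the minimality in the choice of $\tilde{\cW}_V$.

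First I would handle the ratio of dimensions. Fix $\varepsilon>0$. The key observation is that every $(V,\varepsilon)$-invariant $\cW\in\hat{\cF}$ satisfies $1\le \dim(\tilde{\cW}_V)/\dim(\cW)\le 1+\varepsilon$: by $(V,\varepsilon)$-invariance there is some $\hat{\cW}\in\cF$ with $\cW+\cW V\subseteq \hat{\cW}$ and $\dim(\hat{\cW})\le (1+\varepsilon)\dim(\cW)$, and since $\tilde{\cW}_V$ is chosen of smallest dimension among members of $\hat{\cF}$ containing $\cW+\cW V$, we get $\dim(\tilde{\cW}_V)\le\dim(\hat{\cW})\le (1+\varepsilon)\dim(\cW)$, while $\cW\subseteq \tilde{\cW}_V$ forces $\dim(\cW)\le\dim(\tilde{\cW}_V)$ by condition (iii) of Definition~\ref{D-amenable}. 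Because $\omega$ is non-principal, the set of $(V,\varepsilon)$-invariant $\cW\in\hat{\cF}$ lies in $\omega$, hence so does the set on which $\dim(\tilde{\cW}_V)/\dim(\cW)\in[1,1+\varepsilon]$. Applying this with $\varepsilon=1$ first shows the function $\cW\mapsto \dim(\tilde{\cW}_V)/\dim(\cW)$ agrees on a set in $\omega$ with a map into the compact interval $[1,2]$, so $\lim_{\cW\to\omega}\dim(\tilde{\cW}_V)/\dim(\cW)$ exists; letting $\varepsilon\to 0$ then pins this ultralimit to $1$.

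For the second limit I would apply Lemma~\ref{L-rank on finite}.(v) with $\cV=\tilde{\cW}_V$, which is legitimate since $\cW\subseteq\tilde{\cW}_V$: for every $A\in M_{n,m}(S)$,
$$0\le \rk_{\tilde{\cW}_V}(A)-\rk_\cW(A)\le n\,(\dim(\tilde{\cW}_V)-\dim(\cW)).$$
Dividing by $\dim(\cW)$ gives
$$0\le \frac{\rk_{\tilde{\cW}_V}(A)-\rk_\cW(A)}{\dim(\cW)}\le n\Big(\frac{\dim(\tilde{\cW}_V)}{\dim(\cW)}-1\Big),$$
and since the right-hand side tends to $0$ along $\omega$ by the first part, a squeeze yields $\lim_{\cW\to\omega}\frac{\rk_{\tilde{\cW}_V}(A)-\rk_\cW(A)}{\dim(\cW)}=0$. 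The only point requiring a moment of care is the existence of the ultralimits (one needs a map into a compact Hausdorff space), which is disposed of by the boundedness remark above; apart from that the argument is a direct substitution, so I do not anticipate any genuine obstacle.
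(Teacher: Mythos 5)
Your proof is correct and is exactly the argument the paper has in mind: the paper offers no written proof, simply asserting that the lemma is an immediate consequence of Lemma~\ref{L-rank on finite}.(v), and your derivation (minimality of $\tilde{\cW}_V$ against the $(V,\varepsilon)$-invariance witness for the first limit, then the two-sided bound of Lemma~\ref{L-rank on finite}.(v) plus a squeeze for the second) is the natural way to fill that in. The care you take with compactness to guarantee existence of the ultralimits is a reasonable, standard touch and raises no issues.
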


For each $A\in M_{n, m}(S)$, put
$$\rk_{\cF}(A)=\lim_{\cW\rightarrow \omega}\frac{\rk_\cW(A)}{\dim(\cW)}.$$
It follows from Lemmas~\ref{L-rank on finite} and \ref{L-boundary} that $\rk_{\cF}$ is a Sylvester matrix rank function for $S$.

If $R$ has a unique Sylvester matrix rank function $\rk$, then $\rk_{\cF}$ extends $\rk$. In general, in order for $\rk_{\cF}$ to extend $\rk$, we need to assume some extra conditions, which we shall discuss in Section~\ref{S-normal amenable extension}.

\section{Amenable normal extensions} \label{S-normal amenable extension}

In this section we define amenable normal extensions and discuss some basic examples.
Let $R$ be a unital ring with IBN, and let $S$ be a unital ring containing $R$ with $1_S=1_R$.

Recall that a subset $G$ of $S$ is {\it multiplicative} if $1_S\in G$ and $st\in G$ for all $s, t\in G$.
The set of all non-zero-divisors in $S\setminus \{0\}$ is multiplicative.

Denote by $N_S(R)$ the multiplicative set of non-zero-divisors $g$ in $S\setminus \{0\}$ which normalizes $R$, i.e. $gR=Rg$. For each $g\in N_S(R)$ and $a\in R$, there is a unique $\sigma_g(a)\in R$ satisfying $ga=\sigma_g(a)g$. The map $\sigma_g: a\mapsto \sigma_g(a)$ is an automorphism of $R$.

\begin{definition} \label{D-normal}
We say that a finite approximation system $\cF$ for $S$ is {\it normal} if
there is a multiplicative subset $\cU$ of $N_S(R)$ such that:
\begin{enumerate}
\item Each $\cW\in \cF$ has a basis consisting of elements in $\cU$.
\item For each $g\in \cU$, $Rg$ is in $\cF$.
\item For any $g\in \cU$ and $\cW\in \cF$ with $\cW\subseteq Sg$, one has $\{x\in S: xg\in \cW\}\in \cF$.
    \end{enumerate}
   The conditions (i) and (ii) imply that for any $g\in \cU$ and $\cW\in \cF$ one has $g\cW, \cW g\in \cF$. Thus the conditions (i) and (ii) imply (iii) when  $\cU$ is a group.

We call $S$  a {\it right amenable normal extension of $R$} if  it is right amenable with some normal finite approximation system $\cF$, and  given a Sylvester matrix rank function $\rk$ for $R$, call $S$
a {\it  right amenable normal extension of $R$ preserving $\rk$} if furthermore $\rk$ is $\sigma_g$-invariant for every $g\in \cU$.
\end{definition}

\begin{example} \label{E-group ring1}
Let $\Kb, \Gamma$, and $\cF$ be as in Example~\ref{E-group ring}. Then $\cF$ is normal with $\cU=\Gamma$, where we identify $\Gamma$ with its image under the natural embedding $\Gamma\hookrightarrow \Kb\Gamma$. Since $\Kb$ has a unique Sylvester matrix rank function $\rk$, when $\Gamma$ is amenable, $\Kb \Gamma$ is a right amenable normal extension of $\Kb$ preserving $\rk$.
\end{example}

\begin{example} \label{E-crossed product1}
Let $\Gamma, R, R*\Gamma$  and $\cF$ be as in Example~\ref{E-crossed product}. Then $\cF$ is normal with $\cU$ consisting of $b\bar{s}$ for $b$ a unit in $R$ and $s$ an element of $\Gamma$. If $\Gamma$ is amenable, then $R*\Gamma$ is a right amenable normal extension of $R$.
Since every Sylvester matrix rank function is invariant under inner automorphisms, when $\Gamma$ is amenable, if $\rk$ is a Sylvester matrix rank function for $R$ invariant under $\sigma_{\bar{s}}$ for every $s\in \Gamma$, then
$R*\Gamma$ is a right amenable normal extension of $R$ preserving $\rk$.
\end{example}

\begin{example} \label{E-k alg1}
Let $\Kb$ be a field and $S$ a unital $\Kb$-algebra without zero-divisors. Let $\cF$ be as in Example~\ref{E-k alg}. Then $\cF$ is normal with $\cU$ equal to the set of nonzero elements in $S$. Since $\Kb$ has a unique Sylvester matrix rank function $\rk$, if
$S$ is right amenable over $\Kb$ in the sense of Gromov and Elek as in Section~\ref{SS-amenable}, then $S$ is a right amenable normal extension of $\Kb$ preserving $\rk$.
\end{example}

\begin{example} \label{E-field extension1}
Let $\Kb, \Eb, R$  and $\cF$ be as in Example~\ref{E-field extension}.
Then $\cF$  is normal with $\cU=\{a\otimes_{\Kb} 1_R: a\in \Eb, a\neq 0\}$.
For any Sylvester matrix rank function $\rk$ of $R$, $\Eb\otimes_{\Kb} R$ is a right amenable normal extension of $R$ preserving $\rk$.
\end{example}

\begin{remark} \label{R-extension}
If $\rk$ is a Sylvester matrix rank function for $R$ and $S$ is a right amenable normal extension of $R$ preserving $\rk$, then clearly $\rk_{\cF}$ constructed in Section~\ref{S-rank for amenable extension} extends $\rk$.
\end{remark}

For any ring automorphism $\sigma$ of $R$ and  any left $R$-module $\cM$, we denote by $\cM^\sigma$ the left $R$-module $\{\bar{x}: x\in \cM\}$ with addition  $\bar{x}+\bar{y}=\overline{x+y}$ and scalar multiplication $\sigma(a)\bar{x}=\overline{ax}$. The following lemma gives us the implication of invariance of $\rk$ under an automorphism in terms of the corresponding $\dim(\cdot |\cdot)$.

\begin{lemma} \label{L-invariant rk}
Let $\rk$ be a Sylvester matrix rank function for $R$ and $\dim(\cdot|\cdot)$ the corresponding bivariant Sylvester module rank function for $R$.
Let $\sigma$ be a ring automorphism of $R$ preserving $\rk$. Then the following hold.
\begin{enumerate}
\item For any left $R$-modules $\cM_1\subseteq \cM_2$, one has
$$ \dim(\cM_1|\cM_2)=\dim(\cM_1^\sigma|\cM_2^\sigma).$$
\item Let $g\in S$ such that $\sigma(a)g=ga$ for all $a\in R$. For any left $S$-module $\cN$ and any $R$-submodules $\cM_1, \cM_2$ of $\cN$, one has
$$ \dim(\pi_{g\cM_1}(g\cM_2)|\cN/g\cM_1)\le \dim(\pi_{\cM_1}(\cM_2)|\cN/\cM_1),$$
where $\pi_{\cM'}$ denotes the quotient map $\cN\rightarrow \cN/\cM'$ for every $R$-submodule $\cM'$ of $\cN$. In particular,
$$\dim(g\cM_2|\cN)\le \dim(\cM_2|\cN).$$
\end{enumerate}
\end{lemma}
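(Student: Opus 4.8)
The plan is to prove (i) by transporting the axioms of a bivariant Sylvester module rank function through the auto-equivalence $\cM\mapsto\cM^\sigma$ of left $R$-modules, and then to prove (ii) from (i) together with Theorem~\ref{T-bivariant}(iii), via a ``multiplication by $g$'' map that becomes genuinely $R$-linear once its source is twisted by $\sigma$.

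For (i), I would first record that $\cM\mapsto\cM^\sigma$ is an exact additive auto-equivalence of the category of left $R$-modules, with quasi-inverse $\cM\mapsto\cM^{\sigma^{-1}}$, carrying submodules to submodules, quotients to quotients, and finitely generated (free, of a given rank) modules to finitely generated (free, of the same rank) modules, with $\{0\}^\sigma=\{0\}$, and that the map $R\to R^\sigma$, $x\mapsto\overline{\sigma^{-1}(x)}$, is an $R$-module isomorphism (it is additive, and $a\cdot\overline{\sigma^{-1}(x)}=\overline{\sigma^{-1}(ax)}$ since $\sigma^{-1}$ is a ring homomorphism). Each of the six conditions of Definition~\ref{D-bivariant} is preserved under precomposing with this functor, so $(\cM_1,\cM_2)\mapsto\dim(\cM_1^\sigma|\cM_2^\sigma)$ is again a bivariant Sylvester module rank function for $R$. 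By the bijection between bivariant module rank functions and Sylvester matrix rank functions, it suffices to identify the matrix rank function it induces. Using the $R$-module isomorphisms $R^k\to(R^k)^\sigma$, $v\mapsto\overline{\sigma^{-1}(v)}$, one checks that for $A\in M_{n,m}(R)$ the submodule $(R^nA)^\sigma$ of $(R^m)^\sigma$ corresponds to $R^n\sigma(A)$ in $R^m$, where $\sigma(A)$ denotes the entrywise image of $A$; hence the induced matrix rank function sends $A$ to $\dim(R^n\sigma(A)|R^m)=\rk(\sigma(A))=\rk(A)$, the last equality because $\sigma$ preserves $\rk$. So the two bivariant rank functions coincide, which is (i).

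For (ii), the key map is left multiplication by $g$. From $\sigma(a)g=ga$ one checks that it descends to a well-defined $R$-module homomorphism
\[
\Lambda\colon(\cN/\cM_1)^\sigma\longrightarrow\cN/g\cM_1,\qquad\overline{x+\cM_1}\longmapsto gx+g\cM_1,
\]
well defined because $x-x'\in\cM_1$ forces $g(x-x')\in g\cM_1$, and $R$-linear because $g(ax)=(ga)x=(\sigma(a)g)x=\sigma(a)(gx)$. One computes that $\Lambda$ maps the submodule $\pi_{\cM_1}(\cM_2)^\sigma=\{\overline{m+\cM_1}:m\in\cM_2\}$ onto $(g\cM_2+g\cM_1)/g\cM_1=\pi_{g\cM_1}(g\cM_2)$, and maps $(\cN/\cM_1)^\sigma$ onto $g\cN/g\cM_1$. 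Applying Theorem~\ref{T-bivariant}(iii) to $\Lambda$ and the inclusion $\pi_{\cM_1}(\cM_2)^\sigma\subseteq(\cN/\cM_1)^\sigma$ gives
\[
\dim\big(\pi_{g\cM_1}(g\cM_2)\,\big|\,g\cN/g\cM_1\big)\le\dim\big(\pi_{\cM_1}(\cM_2)^\sigma\,\big|\,(\cN/\cM_1)^\sigma\big)=\dim\big(\pi_{\cM_1}(\cM_2)\,\big|\,\cN/\cM_1\big),
\]
the last equality being (i). Since $g\cN/g\cM_1\subseteq\cN/g\cM_1$, enlarging the ambient module cannot increase the bivariant dimension: for a finitely generated submodule this is immediate from Definition~\ref{D-bivariant}(v), as a finitely generated submodule of the smaller ambient module is one of the larger, and the general case follows by taking the supremum in Definition~\ref{D-bivariant}(iv). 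Hence $\dim(\pi_{g\cM_1}(g\cM_2)|\cN/g\cM_1)\le\dim(\pi_{g\cM_1}(g\cM_2)|g\cN/g\cM_1)$, and chaining the inequalities yields the asserted bound; the ``in particular'' statement is the case $\cM_1=\{0\}$.

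The only non-formal point, and thus the main thing to get right, is the $\sigma$-bookkeeping: multiplication by $g$ is not $R$-linear on $\cN$ but becomes so after twisting the source by $\sigma$, and one must ensure the resulting map $\Lambda$ is correctly seen to land in $\cN/g\cM_1$ — this is exactly why (i) is needed to untwist the right-hand side, and why the elementary monotonicity of $\dim(\cdot|\cdot)$ in its second argument is invoked at the last step. Everything else is routine verification.
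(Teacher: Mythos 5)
Your proof is correct and follows essentially the same route as the paper's: for (i), pull back $\dim(\cdot|\cdot)$ along the twist functor $\cM\mapsto\cM^\sigma$, verify the axioms, and identify the induced matrix rank function as $A\mapsto\rk(\sigma(A))=\rk(A)$; for (ii), use multiplication by $g$ as an $R$-module homomorphism out of the $\sigma$-twist (your $\Lambda$ is the paper's induced map $\varphi'$ up to the identification $\cN^\sigma/\cM_1^\sigma\cong(\cN/\cM_1)^\sigma$), apply Theorem~\ref{T-bivariant}(iii), and combine with monotonicity of $\dim(\cdot|\cdot)$ in the ambient module and with part (i). The only cosmetic difference is that you chain the inequalities in the opposite order and spell out the monotonicity step that the paper leaves implicit.
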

\begin{proof} (i). For any left $R$-modules $\cM_1\subseteq \cM_2$, define
$$ \dim^\sigma(\cM_1|\cM_2)=\dim(\cM_1^\sigma|\cM_2^\sigma).$$
Then clearly $\dim^\sigma(\cdot|\cdot)$ satisfies the conditions in Definition~\ref{D-bivariant}, and thus is a bivariant Sylvester module rank function for $R$. Let $A\in M_{n, m}(R)$. Denote by $e_1, \dots, e_m$ the standard basis of $R^m$. Then $\overline{e_1}, \dots, \overline{e_m}$ is an $R$-basis of $(R^m)^\sigma$. Clearly $(R^nA)^\sigma$ is the $R$-submodule of $(R^m)^\sigma$ generated by $\sum_{j=1}^m\sigma(A_{i, j})\overline{e_j}$ for $i=1, \dots, n$. Therefore
$$\dim^\sigma(R^nA|R^m)=\dim((R^nA)^\sigma|(R^m)^\sigma)=\rk(\sigma(A))=\rk(A).$$
Consequently, $\dim^\sigma(\cdot|\cdot)$ is the bivariant Sylvester module rank function for $R$ corresponding to $\rk$, and hence $\dim^\sigma(\cdot|\cdot)=\dim(\cdot|\cdot)$.

(ii). Denote by $\varphi$ the $R$-module homomorphism $\cN^\sigma\rightarrow \cN$ sending $\bar{x}$ to $gx$. Then $\varphi(\cM_1^\sigma)=g\cM_1$ and $\varphi(\cM_2^\sigma)=g\cM_2$. Thus we get an induced $R$-module homomorphism $\varphi': \cN^\sigma/\cM_1^\sigma\rightarrow \cN/g\cM_1$ sending $\bar{x}+\cM_1^\sigma$ to $gx+g\cM_1$. Clearly $\varphi'((\cM_1^\sigma+\cM_2^\sigma)/\cM_1^\sigma)=(g\cM_1+g\cM_2)/g\cM_1$ and $\varphi'(\cN^\sigma/\cM_1^\sigma)= g\cN/g\cM_1$. Thus
\begin{align*}
\dim(\pi_{g\cM_1}(g\cM_2)|\cN/g\cM_1)&=\dim((g\cM_1+g\cM_2)/g\cM_1|\cN/g\cM_1)\\
&\le \dim((g\cM_1+g\cM_2)/g\cM_1|g\cN/g\cM_1)\\
&= \dim(\varphi'((\cM_1^\sigma+\cM_2^\sigma)/\cM_1^\sigma)|\varphi'(\cN^\sigma/\cM_1^\sigma))\\
&\le \dim((\cM_1^\sigma+\cM_2^\sigma)/\cM_1^\sigma|\cN^\sigma/\cM_1^\sigma)\\
&= \dim(((\cM_1+\cM_2)/\cM_1)^\sigma|(\cN/\cM_1)^\sigma)\\
&= \dim((\cM_1+\cM_2)/\cM_1|\cN/\cM_1)\\
&=\dim(\pi_{\cM_1}(\cM_2)|\cN/\cM_1),
\end{align*}
where in the 2nd inequality we apply Theorem~\ref{T-bivariant}.(iii) and in the 4th equality we apply part (i).
\end{proof}

\section{Linear infimum rule} \label{S-infimum rule}

In this section we prove Theorem~\ref{T-infimum for matrix}, showing that for a right amenable normal extension preserving $\rk$, the Sylvester matrix rank function $\rk_{\cF}$ constructed in Section~\ref{S-rank for amenable extension} is actually a limit and an infimum, so does not depend on the choice of the ultrafilter. For this we establish an infimum rule in Lemma~\ref{L-infimum rule}.

To motivate Lemma~\ref{L-infimum rule}, we recall what happens for amenable groups. Let $\Gamma$ be an amenable group. Denote by $\cF(\Gamma)$ the set of all finite subsets of $\Gamma$, and by $\hat{\cF}(\Gamma)$ the set of all nonempty finite subsets of $\Gamma$. For an $\Rb$-valued function $\psi$ defined on $\cF(\Gamma)$ or $\hat{\cF}(\Gamma)$, we say that {\it $\psi(F)$ converges to $L\in [-\infty, +\infty]$ when $F$ becomes more and more right invariant}, written as $\lim_F\psi(F)=L$, if for any neighborhood $U$ of $L$ in $[-\infty, +\infty]$ there are some $K\in \hat{\cF}(\Gamma)$ and $\delta>0$ such that for any $(K, \delta)$-invariant $F\in \hat{\cF}(\Gamma)$ one has $\psi(F)\in U$.

There are two well-known results yielding the convergence of $\psi(F)$. The first one is the so-called ``infimum rule'', showing that actually $\lim_F\psi(F)=\inf_{F\in \hat{\cF}(\Gamma)}\psi(F)$. Below is one of its forms
\cite[Definitions 2.2.10, 3.1.5, Remark 3.1.7, and Proposition 3.1.9]{Mou85} \cite[Appendix]{Danilenko} \cite[Lemma 3.3]{LT} (see also \cite{DFR} \cite[Section 4.7]{KL16}). The second one is the Ornstein-Weiss lemma \cite[Theorem 6.1]{LW} \cite[Theorem 4.38]{KL16}, which is based on the quasitiling machinery of Ornstein and Weiss for amenable groups \cite{OW}.

\begin{lemma} \label{L-classical infimum rule}
Let $\Gamma$ be an amenable group and $\varphi$ an $\Rb$-valued function on $\cF(\Gamma)$  satisfying the following conditions:
\begin{enumerate}
\item $\varphi(\emptyset)=0$,
\item $\varphi(sF)=\varphi(F)$ for every  $F\in \cF(\Gamma)$ and $s\in \Gamma$,
\item $\varphi(F_1\cup F_2)+\varphi(F_1\cap F_2)\le \varphi(F_1)+\varphi(F_2)$ for all $F_1, F_2\in \cF(\Gamma)$.
\end{enumerate}
Then
$$ \lim_F\frac{\varphi(F)}{|F|}=\inf_{F\in \hat{\cF}(\Gamma)}\frac{\varphi(F)}{|F|}.$$
\end{lemma}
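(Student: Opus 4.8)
The plan is to prove the two inequalities $\liminf_F \varphi(F)/|F|\ge c$ and $\limsup_F\varphi(F)/|F|\le c$ separately, where $c:=\inf_{F\in\hat{\cF}(\Gamma)}\varphi(F)/|F|$ (which is $<+\infty$ since $\varphi(\{e_\Gamma\})$ is one of the values, and could a priori be $-\infty$). The first inequality is immediate from the definition of the infimum, so the entire content is the reverse one, which I would phrase uniformly (so as to also cover $c=-\infty$): for every real $t>c$ there are $K\in\hat{\cF}(\Gamma)$ and $\delta>0$ so that every $(K,\delta)$-invariant $F\in\hat{\cF}(\Gamma)$ satisfies $\varphi(F)/|F|<t$.

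First I would record two elementary consequences of (i)--(iii). Iterating the submodularity (iii) over a disjoint union and using $\varphi(\emptyset)=0$ gives subadditivity on disjoint sets, and combined with (ii) the crude bound $\varphi(E)\le |E|\,\varphi(\{e_\Gamma\})$ for every finite $E$. Next, a Han-type (Shearer-type) inequality: if finite sets $A_1,\dots,A_m\subseteq X$ cover a finite set $X$ in such a way that every point of $X$ lies in exactly $r$ of them, then $\varphi(X)\le \frac1r\sum_{j=1}^m\varphi(A_j)$. I would prove this from submodularity and $\varphi(\emptyset)=0$ alone, with no monotonicity, by the standard uncrossing argument: repeatedly replace a properly intersecting pair $A_i,A_j$ by $A_i\cup A_j$ and $A_i\cap A_j$, which leaves all pointwise multiplicities (and $\sum_j|A_j|$) unchanged and does not increase $\sum_j\varphi(A_j)$ while strictly increasing $\sum_j|A_j|^2$; this terminates at a laminar family, for which one concludes by induction on $|X|$ and on $r$, using disjoint subadditivity on the maximal sets.

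Given $t>c$, fix $F_0\in\hat{\cF}(\Gamma)$ with $\varphi(F_0)/|F_0|$ small enough (below $t$, and below $(t+c)/2$ when $c>-\infty$), and put $n_0=|F_0|$. For a right-invariant $F$ I would use the $n_0$-regular cover of $F$ by the left translates $A_s:=sF_0\cap F$, indexed by $s\in FF_0^{-1}$: each $x\in F$ lies in $A_s$ precisely for $s\in xF_0^{-1}$, hence in exactly $n_0$ of them. Split the index set into the ``interior'' $F^\circ=\{s:sF_0\subseteq F\}$, where $A_s=sF_0$ and $\varphi(A_s)=\varphi(F_0)$ by (ii), and the ``boundary'' indices, where I bound $\varphi(A_s)\le n_0\max(\varphi(\{e_\Gamma\}),0)$ by the crude bound. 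A one-sided Følner count gives $|F\setminus F^\circ|\le n_0\,|FF_0\setminus F|$ and $|\{s:sF_0\cap F\neq\emptyset\}|\le|FF_0^{-1}|$, so if $F$ is $(F_0\cup F_0^{-1},\delta)$-invariant then $|F^\circ|\ge(1-n_0\delta)|F|$ and the number of boundary indices is at most $\delta(1+n_0)|F|$. Feeding this into the Han-type inequality yields $\varphi(F)\le \frac{|F^\circ|}{n_0}\varphi(F_0)+\delta(1+n_0)\max(\varphi(\{e_\Gamma\}),0)\,|F|$; bounding $|F^\circ|\le|F|$ when $\varphi(F_0)\ge0$ and $|F^\circ|\ge(1-n_0\delta)|F|$ when $\varphi(F_0)<0$ gives $\varphi(F)/|F|<t$ once $\delta$ is small enough (depending only on the fixed $F_0$, $t$, $c$), which is exactly the desired uniform estimate; the case $c=-\infty$ is handled the same way by choosing $\varphi(F_0)/|F_0|$ arbitrarily negative.

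I expect the main obstacle to be getting the Han-type inequality in precisely the needed generality — in particular for a possibly non-monotone submodular $\varphi$ — together with the careful one-sided Følner bookkeeping, which is what forces the use of left translates $sF_0$ (so that the defect $F\setminus F^\circ$ is controlled by $|FF_0\setminus F|$ rather than by a left-invariance defect). The passage from ``$F_0$ nearly optimal'' to ``$\varphi(F)/|F|$ nearly optimal'' is then pure counting; the point, compared with the Ornstein--Weiss lemma for a merely subadditive $\varphi$, is that strong subadditivity lets one get away with a cover of bounded uniform multiplicity in place of an actual quasitiling.
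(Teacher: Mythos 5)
Your argument is correct, but it is not the route the paper takes. The paper does not prove Lemma~\ref{L-classical infimum rule} directly: it cites the literature for it and then observes that its more general linear infimum rule, Lemma~\ref{L-infimum rule}, specialized to $R=\Kb$ and $S=\Kb\Gamma$, yields a new proof. That proof avoids Shearer's inequality altogether. Its key device is a minimality trick: among all $F_0\in \hat{\cF}(\Gamma)$ with $\varphi(F_0)/|F_0|<r-2\varepsilon$ one picks one of smallest cardinality and sets $C=\varphi(F_0)/|F_0|$; minimality forces $\varphi(E)\ge C|E|$ for every strictly smaller $E$, and an induction adding one translate $sF_0$ at a time (the overlap $EF_0\cap sF_0$ is either all of $sF_0$ or a proper subset of a translate of $F_0$, hence bounded below by $C$ times its cardinality) gives $\varphi(EF_0)\le C|EF_0|$ for all finite $E$; a F{\o}lner set $F$ is then split as $F^{\dag}F_0$ plus a small remainder. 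Your proof is instead the classical covering argument (essentially that of \cite{DFR}): a cover of $F$ by the traces $sF_0\cap F$, $s\in FF_0^{-1}$, of constant multiplicity $n_0$, fed into the exact-cover Shearer inequality for submodular $\varphi$ with $\varphi(\emptyset)=0$. Both proofs correctly exploit exactness of the multiplicity (rather than a cover with multiplicity at least $n_0$), which is what lets one dispense with monotonicity; your uncrossing proof of the Shearer step is sound modulo routine multiset bookkeeping at the laminar stage (the telescoping ``diminishing returns'' proof is a shortcut), and your F{\o}lner counting is right once you normalize $e_\Gamma\in F_0$, which condition (ii) permits. What the paper's minimality trick buys is that it survives the passage to the module setting of Lemma~\ref{L-infimum rule}, where the translates $g\cV$ of a subspace do not cover a given $\cW$ with constant multiplicity and no Shearer-type inequality is available; what your route buys is a self-contained proof of the group-theoretic statement that does not pass through the linear machinery.
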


Now let $R$ be a unital ring with IBN and let $S$ be a right amenable normal extension of $R$ with $\cF$ and $\cU$.
Recall that for any $\cW, \cV\in \cF$, $\cW\cV$ denotes the set of finite sums of elements of the form $wv$ for $w\in \cW$ and $v\in \cV$, and hence $\cW\cV\in \cF$.
Here is the linear version of the above definition of limit.

\begin{definition} \label{D-limit}
For $\cV\in \cF$ and $\delta>0$ we say $\cW\in \hat{\cF}$ is {\it $(\cV, \delta)$-invariant} if $\dim(\cW+\cW \cV)\le (1+\delta)\dim(\cW)$.
For an $\Rb$-valued function $\psi$ defined on $\cF$ or $\hat{\cF}$, we say {\it $\psi(\cW)$ converges to $L\in [-\infty, +\infty]$ when $\cW$ becomes more and more right invariant} if for any neighborhood $U$ of $L$ in $[-\infty, +\infty]$ there are some $\cV\in\cF$ and $\delta>0$ such that for any  $(\cV, \delta)$-invariant $\cW\in \hat{\cF}$
we have
$\psi(\cW)\in U$. In such case we write $\lim_{\cW} \psi(\cW)=L$.
\end{definition}

Now we give a linear infimum rule.

\begin{lemma} \label{L-infimum rule}
Let $R$ be a unital ring with IBN and let $S$ be a right amenable normal extension of $R$ with $\cF$ and $\cU$.
Let $\varphi$ be an $\Rb$-valued function on $\cF$  satisfying the following conditions:
\begin{enumerate}
\item $\varphi(\{0\})=0$,
\item $\varphi(g\cV)\le \varphi(\cV)$ for every  $\cV\in \cF$ and $g\in \cU$,
\item $\varphi(\cV+\cW)+\varphi(\cV\cap \cW)\le \varphi(\cV)+\varphi(\cW)$ for all $\cV, \cW\in \cF$.
\end{enumerate}
Then
$$ \lim_{\cW}\frac{\varphi(\cW)}{\dim(\cW)}=\inf_{\cV\in \hat{\cF}}\frac{\varphi(\cV)}{\dim(\cV)}.$$
\end{lemma}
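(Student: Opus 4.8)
The plan is to mimic the proof of the classical infimum rule (Lemma~\ref{L-classical infimum rule}), replacing finite subsets of $\Gamma$ by elements of $\cF$ and the right-translation invariance of $\varphi$ by the weaker hypothesis (ii), which only gives $\varphi(g\cV)\le \varphi(\cV)$ for $g\in \cU$. Let $c:=\inf_{\cV\in\hat\cF}\frac{\varphi(\cV)}{\dim(\cV)}$; by hypothesis (iii) and induction (cf.\ Lemma~\ref{L-additivity for standard}.(ii)) $\varphi$ is subadditive, so $\frac{\varphi(\cW)}{\dim(\cW)}$ is bounded below by $c$ and above by $\varphi(1_S\cdot R)=\varphi(R)$ on all of $\hat\cF$; in particular $c>-\infty$. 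It then suffices to show that for every $\epsilon>0$ there are $\cV_0\in\cF$ and $\delta>0$ such that every $(\cV_0,\delta)$-invariant $\cW\in\hat\cF$ satisfies $\frac{\varphi(\cW)}{\dim(\cW)}<c+\epsilon$.

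First I would fix $\cV_1\in\hat\cF$ with $\frac{\varphi(\cV_1)}{\dim(\cV_1)}<c+\epsilon/2$, say with a basis $g_1,\dots,g_d\in\cU$ (using normality, Definition~\ref{D-normal}.(i)), and set $d=\dim(\cV_1)$. The key geometric step is a covering/averaging argument: given a highly right-invariant $\cW$, I want to ``tile'' $\cW$ approximately by right translates $\cW\cap g_i^{-1}\cV_1$-type pieces. Concretely, for $g\in\cU$ consider the map $x\mapsto gx$; since $\cV_1=\bigoplus_i Rg_i$, one has $g_i(g_i^{-1}\cdot(\text{something}))$... the cleaner route is to average over the basis: for each $i$ look at $g_i\cW$, which lies in $\cF$ by the remark after Definition~\ref{D-normal}, and use that $\sum_{i=1}^d \mathbf 1_{g_i\cW}$ relates to $\cV_1\cW$. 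More precisely I would establish, for $\cW\in\hat\cF$, an inequality of the shape
\begin{align*}
\sum_{i=1}^d \dim(g_i\cW) \;\ge\; \dim(\cV_1\cW) \;=\; \dim\Big(\sum_{i=1}^d g_i\cW\Big),
\end{align*}
together with a subadditivity estimate $\varphi(\cV_1\cW)\le \sum_i \varphi(g_i\cW)\le \sum_i\varphi(\cW)=d\,\varphi(\cW)$ from (iii) and (ii), and in the other direction the ``one more than average'' trick: iterating the inequality $\varphi(\cW)\le \varphi(\cV_1\cW)-\varphi(\cV_1\cW\ominus\cW)+\dots$ obtained by decomposing $\cV_1\cW$ into $\cW$ plus a complement in $\cF$ (Definition~\ref{D-amenable}.(iii)) and comparing $\varphi$ on the complement with the infimum $c$. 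When $\cW$ is $(\cV_1,\delta)$-invariant, $\dim(\cV_1\cW)\le(1+\delta)\dim(\cW)$ (note $\cV_1\cW=\cW\cV_1$ need not hold, so I would instead take $\cV_0$ to be generated together with the necessary conjugates $\sigma_{g}(\cdot)$ so that right-invariance of $\cW$ under $\cV_0$ forces $\cV_1\cW$ small — this is where normality enters essentially), so the ``excess'' piece has dimension $\le\delta\dim(\cW)$, and $\varphi$ on it is at most (a constant)$\cdot\delta\dim(\cW)$, while the main piece contributes roughly $\frac{\varphi(\cV_1)}{\dim(\cV_1)}\dim(\cW)<(c+\epsilon/2)\dim(\cW)$. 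Combining gives $\varphi(\cW)<(c+\epsilon)\dim(\cW)$ for $\delta$ small.

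The main obstacle I anticipate is precisely the asymmetry between left and right: hypothesis (ii) controls $\varphi$ under \emph{left} multiplication by $\cU$, but invariance of $\cW$ is a \emph{right} F\o lner condition ($\dim(\cW+\cW\cV)$ small), so to run the tiling I must pass between $\cW\cV_1$ and $\cV_1\cW$. This is exactly what the normality hypothesis is designed for: for $g\in\cU$, $g\cW\in\cF$ and $\cW g\in\cF$, and $\sigma_g$ relates left and right multiplication ($ga=\sigma_g(a)g$), so a right translate $\cW g_i$ can be rewritten and the collection $\{\cW g_i\}_i$ or $\{g_i\cW\}_i$ can be made to cover $\cW$ efficiently when $\cW$ is right-invariant under a suitably chosen finite $\cV_0$ built from the $g_i$'s. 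I would set up the bookkeeping so that the ``boundary'' term is $\dim(\cW\cV_0)-\dim(\cW)\le\delta\dim(\cW)$, use Lemma~\ref{L-additivity for standard} to control dimensions of unions/intersections of translates inside a common $\tilde\cW\in\cF$, and use the subadditivity (iii) plus (ii) to bound $\varphi$ of each translate by $\varphi(\cW)$ and $\varphi$ of the overlaps/complements by $c$ times their dimension. The remaining steps — choosing the order of quantifiers $\epsilon\rightsquigarrow\cV_1\rightsquigarrow\cV_0,\delta$, and checking that $\lim_\cW$ in the sense of Definition~\ref{D-limit} indeed extracts the infimum and not merely $\le c+\epsilon$ (the reverse inequality $\liminf\ge c$ being immediate from the definition of $c$) — are routine.
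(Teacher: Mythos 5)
There is a genuine gap in the central tiling step. Your plan controls $\cV_1\cW$ (the good set multiplied on the \emph{left} of $\cW$) using the right F{\o}lner condition on $\cW$, and you acknowledge the mismatch but propose to repair it by adjoining ``conjugates $\sigma_g(\cdot)$'' to $\cV_0$. This does not work: $\sigma_g$ is only defined on $R$ (via $ga=\sigma_g(a)g$ for $a\in R$), not on elements of $S$ or on members of $\cF$, and nothing in Definitions~\ref{D-amenable} or \ref{D-normal} lets you convert $\dim(\cW+\cW\cV_0)\le(1+\delta)\dim(\cW)$ into any bound on $\dim(\cV_1\cW)$. Moreover, the estimate you write down, $\varphi(\cV_1\cW)\le\sum_i\varphi(g_i\cW)\le d\,\varphi(\cW)$, bounds $\varphi(\cW)$ from \emph{below}, and the subsequent display $\varphi(\cW)\le\varphi(\cV_1\cW)-\varphi(\cV_1\cW\ominus\cW)+\dots$ has the inequality reversed relative to what (iii) actually gives ($\varphi(A\oplus B)\le\varphi(A)+\varphi(B)$ yields $\varphi(\cW)\ge\varphi(\cV_1\cW)-\varphi(\text{complement})$); so as written the argument cannot produce the needed upper bound $\varphi(\cW)\le(c+\epsilon)\dim(\cW)$. (Also, your justification that $c>-\infty$ is circular; the correct formulation fixes an arbitrary real $r>c$ and works with that, which also covers $c=-\infty$.)

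The paper's proof keeps everything right-handed and never converts sides. Fix $r>L$ and choose $\cV\in\hat\cF$ with $\frac{\varphi(\cV)}{\dim(\cV)}=:C<r-2\varepsilon$ of \emph{minimal dimension} among such; minimality forces $\varphi(\cV')\ge C\dim(\cV')$ whenever $\dim(\cV')<\dim(\cV)$. One then proves by induction on $\dim(\cW')$ that $\varphi(\cW'\cV)\le C\dim(\cW'\cV)$ for \emph{every} $\cW'\in\cF$: write $\cW'=\cW_1'\oplus Rg$ with $g\in\cU$, use (ii) in the form $\varphi(g\cV)\le\varphi(\cV)$ (applied to the small set $\cV$, not to $\cW$), and bound $\varphi(\cW_1'\cV\cap g\cV)$ from below by $C\dim(\cdot)$ because that intersection has dimension $<\dim(\cV)$ unless it is all of $g\cV$. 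Finally, for a $(\cV,\delta)$-invariant $\cW$ one passes to the interior $\cW^\dag=\bigcap_j\{x: xv_j\in\cW\}$, so that $\cW^\dag\cV\subseteq\cW$ and $\dim(\cW)-\dim(\cW^\dag)\le\delta\dim(\cV)\dim(\cW)$ by Lemma~\ref{L-additivity for standard}; then $\varphi(\cW)\le\varphi(\cW^\dag\cV)+\varphi(\cW^\sharp)\le C\dim(\cW^\dag\cV)+\varphi(R)\dim(\cW^\sharp)\le r\dim(\cW)$. The two ingredients missing from your proposal are precisely the minimal-dimension choice of $\cV$ (which powers the induction) and the interior set $\cW^\dag$ with $\cW^\dag\cV\subseteq\cW$ (which is what right-invariance actually controls).
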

\begin{proof}
Put $L=\inf_{\cV\in \hat{\cF}}\frac{\varphi(\cV)}{\dim(\cV)}\in [-\infty, +\infty)$. Let $r\in \Rb$ with $r>L$. Take $\varepsilon>0$ with $r-2\varepsilon>L$.
The set of $\cV\in \hat{\cF}$ satisfying $\frac{\varphi(\cV)}{\dim(\cV)}<r-2\varepsilon$ is nonempty. Take a $\cV$ in this set with smallest $\dim(\cV)$, and put $C:=\frac{\varphi(\cV)}{\dim(\cV)}<r-2\varepsilon$. Then $\frac{\varphi(\cV')}{\dim(\cV')}\ge C$ for every $\cV'\in \hat{\cF}$ satisfying $\dim(\cV')<\dim(\cV)$. From the condition (i) we conclude that
$$\varphi(\cV')\ge C \dim(\cV')$$
for every  $\cV'\in \cF$ satisfying $\dim(\cV')<\dim(\cV)$.

We claim that $\varphi(\cW\cV)\le C\dim(\cW\cV)$ for every $\cW\in \cF$.
We prove it by induction on $\dim(\cW)$. The case $\dim(\cW)=0$ follows from the condition (i).
When $\dim(\cW)=1$, we have $\cW=Rg$ for some $g\in \cU$ and hence
$$\varphi(\cW \cV)=\varphi(g \cV)\le \varphi(\cV)=C\dim(\cV)=C\dim(\cW\cV),$$
where the inequality comes from the condition (ii).
Suppose that the claim holds when $\dim(\cW)=n$. Let $\cW\in \cF$ with $\dim(\cW)=n+1$. Write $\cW$ as $\cW_1\oplus \cW_2$ with $\cW_1, \cW_2\in \cF$ such that $\dim(\cW_1)=n$ and $\dim(\cW_2)=1$.
Then $\cW_2=Rg$ for some $g\in \cU$.
By induction hypothesis we have $\varphi(\cW_j\cV)\le C\dim(\cW_j\cV)$ for $j=1, 2$.
If $\cW_1\cV\cap \cW_2\cV=\cW_2\cV$, then $\cW\cV=\cW_1\cV+\cW_2\cV=\cW_1\cV$, and hence
\begin{align*}
\varphi(\cW\cV)=\varphi(\cW_1\cV)\le C\dim(\cW_1\cV_1)=C\dim(\cW\cV).
\end{align*}
Thus we may assume that $\cW_1\cV\cap \cW_2\cV\neq \cW_2\cV$. Note that $\cW_1\cV\cap \cW_2\cV\in \cF$.
Thus
$$\dim(\cW_1\cV\cap \cW_2\cV)<\dim(\cW_2\cV)=\dim(\cV).$$
Therefore $\varphi(\cW_1\cV\cap \cW_2\cV)\ge C\dim(\cW_1\cV\cap \cW_2\cV)$.
By the condition (iii)  we have
\begin{align*}
\varphi(\cW\cV)&=\varphi(\cW_1\cV+\cW_2\cV)\\
&\le \varphi(\cW_1\cV)+\varphi(\cW_2\cV)-\varphi(\cW_1\cV\cap \cW_2\cV)\\
&\le C\dim(\cW_1\cV)+C\dim(\cW_2\cV)-C\dim(\cW_1\cV\cap \cW_2\cV)\\
&=C\dim(\cW_1\cV+\cW_2\cV)=C\dim(\cW\cV),
\end{align*}
where the 2nd equality comes from Lemma~\ref{L-additivity for standard}.(i).
This proves our claim.

From the condition (ii) we have $\varphi(Rg)=\varphi(gR)\le \varphi(R)$ for every $g\in \cU$.
Then from the conditions (i) and (iii)
we have
$\varphi(\cW)\le \varphi(R)\dim(\cW)$ for all $\cW\in \cF$.

Take $\delta>0$ with $\delta \max(|C|, |\varphi(R)|)\dim(\cV)\le \varepsilon$.
Now let $\cW\in \hat{\cF}$ be $(\cV, \delta)$-invariant.
Write $\cV$ as $\bigoplus_{1\le j\le \dim(\cV)}Rv_j$ for some $v_1, \dots, v_{\dim(\cV)}\in \cU$.
For each $1\le j\le \dim(\cV)$,
set $\cW_j=\{w\in \cW: wv_j\in \cW\}=\{x\in S: xv_j\in \cW\cap \cW v_j\}$. Then $\cW_j\in \cF$, and $w\mapsto wv_j$ is a left $R$-module isomorphism from $\cW_j$ onto $\cW\cap \cW v_j$. Note that $\cW+\cW v_j\in \cF$ is contained in  $\cW+\cW\cV$, thus $\dim(\cW+\cW v_j)\le \dim(\cW+\cW\cV)$.
Therefore
\begin{align*}
\dim(\cW)-\dim(\cW_j)&=\dim(\cW)-\dim(\cW\cap \cW v_j)\\
&=\dim(\cW+\cW v_j)-\dim(\cW v_j)\\
&\le \dim(\cW+\cW\cV)-\dim(\cW)\\
&\le \delta \dim(\cW),
\end{align*}
where in the 2nd equality we apply Lemma~\ref{L-additivity for standard}.(i).
Set $\cW^\dag=\bigcap_{j=1}^{\dim(\cV)}\cW_j\in \cF$.
Then $\cW^\dag \cV\subseteq \cW$, and by Lemma~\ref{L-additivity for standard}.(ii) we have
$$\dim(\cW)-\dim(\cW^\dag)\le \sum_{j=1}^{\dim(\cV)}(\dim(\cW)-\dim(\cW_j))\le \delta \dim(\cV)\dim(\cW).$$
Since $\cW^\dag\cV, \cW\in \cF$ and $\cW^\dag \cV\subseteq \cW$, we have $\cW=\cW^\dag\cV\oplus \cW^\sharp$ for some $\cW^\sharp\in \cF$.
Then
$$\dim(\cW)\ge \dim(\cW^\dag \cV)\ge \dim(\cW^\dag v_1)=\dim(\cW^\dag)\ge \dim(\cW)-\delta \dim(\cV)\dim(\cW),$$
and hence
$$ |C\dim(\cW^\dag \cV)-C\dim(\cW)|\le |C|\delta\dim(\cV)\dim(\cW)\le \varepsilon \dim(\cW),$$
and
$$ |\varphi(R)\dim(\cW^\sharp)|=|\varphi(R)| \cdot |\dim(\cW)-\dim(\cW^\dag \cV)|\le |\varphi(R)| \delta \dim(\cV)\dim(\cW)\le \varepsilon \dim(\cW). $$
We conclude that
\begin{align*}
\varphi(\cW)&=\varphi(\cW^\dag\cV+\cW^\sharp)\\
&=\varphi(\cW^\dag\cV+\cW^\sharp)+\varphi(\cW^\dag\cV\cap \cW^\sharp)\\
&\le \varphi(\cW^\dag \cV)+\varphi(\cW^\sharp)\\
&\le C\dim(\cW^\dag \cV)+\varphi(R)\dim(\cW^\sharp)\\
&\le C\dim(\cW)+\varepsilon \dim(\cW)+\varepsilon\dim(\cW)\\
&\le r\dim(\cW),
\end{align*}
where the 1st inequality comes from the condition (iii).
\end{proof}

If we take $R$ to be a field $\Kb$ and $S$ to be the group ring $\Kb \Gamma$ of an amenable group $\Gamma$ in Example~\ref{E-group ring1}, then Lemma~\ref{L-infimum rule} yields a new proof of Lemma~\ref{L-classical infimum rule}.

Now let $\rk$ be a Sylvester matrix rank function for $R$ and $\dim(\cdot|\cdot)$ the corresponding bivariant Sylvester module rank function for $R$. We assume further that $S$ is a right amenable normal extension of $R$ preserving $\rk$.

\begin{lemma} \label{L-amenable extension relative}
Let $\cN_1\subseteq \cN_2$ be left $S$-modules such that $\cN_1$ is finitely generated. Let $\cM_1$ be a finitely generated $R$-submodule of $\cN_1$ generating $\cN_1$ as an $S$-module. Then
\begin{align} \label{E-amenable extension relative}
\lim_\cW\frac{\dim(\cW\cM_1|\cN_2)}{\dim(\cW)}=\inf_{\cV\in \hat{\cF}}\frac{\dim(\cV\cM_1|\cN_2)}{\dim(\cV)}.
\end{align}
This limit does not depend on the choice of $\cM_1$. We denote it by $\dim_{\cF}(\cN_1|\cN_2)$.
\end{lemma}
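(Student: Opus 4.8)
The plan is to verify that the function $\varphi(\cW) := \dim(\cW\cM_1 \mid \cN_2)$ on $\cF$ satisfies the three hypotheses of Lemma~\ref{L-infimum rule}, which immediately yields the stated limit formula; then to handle the independence on $\cM_1$ separately. For hypothesis (i), note $\{0\}\cM_1 = \{0\}$, so $\varphi(\{0\}) = \dim(\{0\}\mid\cN_2) = 0$ by property (ii) of Definition~\ref{D-bivariant}. For hypothesis (iii), observe that $(\cV+\cW)\cM_1 = \cV\cM_1 + \cW\cM_1$ and $(\cV\cap\cW)\cM_1 \subseteq \cV\cM_1 \cap \cW\cM_1$, so by monotonicity (property (iv) of Definition~\ref{D-bivariant}, i.e.\ $\dim(\cdot\mid\cN_2)$ is increasing in the first variable among finitely generated submodules, plus property (iii) of Theorem~\ref{T-bivariant}) together with Theorem~\ref{T-bivariant}.(ii) applied inside $\cN_2$ to the modules $\cV\cM_1, \cW\cM_1$, we get
\[
\varphi(\cV+\cW) + \varphi(\cV\cap\cW) \le \dim(\cV\cM_1+\cW\cM_1\mid\cN_2) + \dim(\cV\cM_1\cap\cW\cM_1\mid\cN_2) \le \varphi(\cV) + \varphi(\cW).
\]
All these modules are finitely generated since $\cV,\cW\in\cF$ are finitely generated as $R$-modules and $\cM_1$ is finitely generated.

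The crux is hypothesis (ii): $\varphi(g\cV) \le \varphi(\cV)$ for $g\in\cU$ and $\cV\in\cF$. Here $(g\cV)\cM_1 = g(\cV\cM_1)$ since left multiplication by $g$ on $S$ is additive and $\cN_2$ is an $S$-module, so this reads $\dim(g(\cV\cM_1)\mid\cN_2) \le \dim(\cV\cM_1\mid\cN_2)$. This is exactly the content of the ``in particular'' clause of Lemma~\ref{L-invariant rk}.(ii): since $g\in\cU\subseteq N_S(R)$ normalizes $R$ with associated automorphism $\sigma_g$ satisfying $\sigma_g(a)g = ga$ for all $a\in R$, and $\rk$ is preserved by $\sigma_g$ (this is where ``preserving $\rk$'' enters), we may take $\cM_2 := \cV\cM_1$ and $\cM_1$ (in the notation of that lemma) $:= \{0\}$, or more directly invoke the final displayed inequality of Lemma~\ref{L-invariant rk}.(ii) with $\cN := \cN_2$. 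This gives hypothesis (ii), and Lemma~\ref{L-infimum rule} now applies verbatim to produce \eqref{E-amenable extension relative}.

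For independence of the choice of $\cM_1$: suppose $\cM_1'$ is another finitely generated $R$-submodule of $\cN_1$ generating $\cN_1$ over $S$. Since $\cM_1'$ is finitely generated as an $R$-module and each generator lies in $\cN_1 = S\cM_1$, each generator lies in $V\cM_1$ for some finite subset $V\subseteq S$; enlarging, there is a finitely generated free $R$-submodule $\cW_0\in\hat\cF$ with $\cM_1' \subseteq \cW_0\cM_1$, and symmetrically $\cM_1\subseteq \cW_0'\cM_1'$ for some $\cW_0'\in\hat\cF$ (using condition (iv) of Definition~\ref{D-amenable} to place these inside elements of $\cF$). Then for $\cW\in\hat\cF$, $\cW\cM_1' \subseteq \cW\cW_0\cM_1$ and $\cW\cW_0\in\cF$ with $\dim(\cW\cW_0)$ controlled; as $\cW$ becomes more and more right invariant (applying right invariance with respect to a basis of $\cW_0$, exactly as in the proof of Lemma~\ref{L-infimum rule}) one has $\dim(\cW\cW_0)/\dim(\cW)\to 1$, and since $\varphi$ is subadditive and bounded by $\varphi(R)\dim(\cdot)$ on increments one gets $\dim(\cW\cM_1'\mid\cN_2) \le \dim(\cW\cW_0\cM_1\mid\cN_2)$ with the difference between $\dim(\cW\cW_0\cM_1\mid\cN_2)$ and $\dim(\cW\cM_1\mid\cN_2)$ being $o(\dim(\cW))$; combined with the symmetric inequality the two limits agree.

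I expect the main obstacle to be the bookkeeping in the independence argument — specifically, arguing cleanly that $\dim(\cW\cW_0\cM_1\mid\cN_2) - \dim(\cW\cM_1\mid\cN_2) = o(\dim\cW)$ along right-invariant $\cW$. This should follow by the same ``thickening by a basis of $\cW_0$'' estimate used in the proof of Lemma~\ref{L-infimum rule} (writing $\cW\cW_0$ as a sum of translates $\cW g$ and using subadditivity of $\varphi$ together with the bound $\varphi(\cW g) - \varphi(\cW) \le \varphi(\cW g \setminus \cW\,\text{-part})$, i.e.\ the fact that $\dim(\cX\mid\cN_2)\le \varphi(R)\dim(\cX)$ for $\cX\in\cF$), but writing it out carefully is the delicate part; everything else is a direct application of the already-established lemmas.
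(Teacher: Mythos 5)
Your proposal is correct and follows essentially the same route as the paper: verify the three hypotheses of Lemma~\ref{L-infimum rule} for $\varphi(\cW)=\dim(\cW\cM_1|\cN_2)$ using Lemma~\ref{L-invariant rk}.(ii) for hypothesis (ii) and Theorem~\ref{T-bivariant}.(ii) plus monotonicity for hypothesis (iii), then reduce independence of $\cM_1$ to the containment $\cM_1'\subseteq\cV'\cM_1$ for some $\cV'\in\hat\cF$. The only (immaterial) difference is in finishing the independence step: the paper avoids your $o(\dim\cW)$ difference estimate by observing that $\cW\cV'$ is itself increasingly right invariant with $\dim(\cW\cV')/\dim(\cW)\to 1$ and re-indexing the already-established limit $L_{\cV'\cM_1}=\lim_\cW \dim(\cW\cV'\cM_1|\cN_2)/\dim(\cW\cV')=L_{\cM_1}$, which rests on the same invariance computation you sketch.
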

\begin{proof} Consider the function $\varphi_{\cM_1}$ on $\cF$ defined by $\varphi_{\cM_1}(\cV)=\dim(\cV\cM_1|\cN_2)$. Let's check that it satisfies the conditions in Lemma~\ref{L-infimum rule}. Clearly $\varphi_{\cM_1}(\{0\})=\dim(\{0\}|\cN_2)=0$. For any $g\in \cU$ and $\cV\in \cF$, by Lemma~\ref{L-invariant rk}.(ii) we have
$$ \varphi_{\cM_1}(g\cV)=\dim(g\cV\cM_1|\cN_2)\le \dim(\cV\cM_1|\cN_2)=\varphi_{\cM_1}(\cV).$$
For any $\cV, \cW\in \cF$, by  Theorem~\ref{T-bivariant}.(ii) we have
\begin{align*}
\varphi_{\cM_1}(\cV+\cW)+\varphi_{\cM_1}(\cV\cap \cW)&\le \dim(\cV\cM_1+\cW\cM_1|\cN_2)+\dim(\cV\cM_1\cap \cW\cM_1|\cN_2)\\
&\le \dim(\cV\cM_1|\cN_2)+\dim(\cW\cM_1|\cN_2)\\
&=\varphi_{\cM_1}(\cV)+\varphi_{\cM_1}(\cW).
\end{align*}
Therefore from Lemma~\ref{L-infimum rule} we get \eqref{E-amenable extension relative}.
Denote $\lim_\cW\frac{\dim(\cW\cM_1|\cN_2)}{\dim(\cW)}$ by $L_{\cM_1}$.

Let $\cM_1'$ be another finitely generated $R$-submodule of $\cN_1$ generating $\cN_1$ as an $S$-module. We shall show $L_{\cM_1}=L_{\cM_1'}$. By symmetry it suffices to show $L_{\cM_1'}\le L_{\cM_1}$. Since $\cM_1$ generates $\cN_1$ as an $S$-module, we have $\cM_1'\subseteq \cV'\cM_1$ for some nonzero finitely generated left $R$-submodule $\cV'$ of $S$. We may assume that $\cV'\in \cF$.
For any $\delta>0$ and $\cV\in \cF$, when $\cW$ in $\hat{\cF}(V)$ is $(\cV'+\cV'\cV, \delta)$-invariant,
we have
$$\dim(\cW\cV'+\cW\cV'\cV)=\dim(\cW(\cV'+\cV'\cV))\le (1+\delta)\dim(\cW)\le (1+\delta)\dim(\cW\cV').$$
This shows that when $\cW\in \hat{\cF}$ becomes more and more right invariant, so does $\cW\cV'$. Also clearly $\lim_\cW\frac{\dim(\cW\cV')}{\dim(\cW)}=1$. Therefore
\begin{align*}
L_{\cM_1'}&\le L_{\cV'\cM_1}=\lim_\cW\frac{\dim(\cW\cV'\cM_1|\cN_2)}{\dim(\cW)}
=\lim_\cW\frac{\dim(\cW\cV'\cM_1|\cN_2)}{\dim(\cW\cV')}
=L_{\cM_1}.
\end{align*}
\end{proof}

When $R$ is a field $\Kb$, it has a unique Sylvester matrix  rank function $\rk$, and the corresponding bivariant Sylvester module rank function is $\dim(\cM_1|\cM_2)=\dim_\Kb(\cM_1)$ for all left $R$-modules $\cM_1\subseteq \cM_2$, where $\dim_{\Kb}(\cM_1)$ is the usual dimension for $\Kb$-vector spaces and we put $\dim_{\Kb}(\cM_1)=\infty$ whenever $\cM_1$ is infinite-dimensional. Taking $S$ to be a unital right amenable $\Kb$-algebra without zero-divisors in Example~\ref{E-k alg1}, we obtain the following consequence of Lemma~\ref{L-amenable extension relative}.

\begin{corollary} \label{C-Gromov}
Let $\Kb$ be a field and let $S$ be a unital right amenable $\Kb$-algebra without zero-divisors. Put $\cF$ to be the set of all finite-dimensional $\Kb$-linear subspaces of $S$. For any finitely generated left $S$-module $\cN$ and any finite-dimensional $\Kb$-linear subspace $\cM$ of $\cN$ generating $\cN$ as an $S$-module, the limit $\lim_\cW \frac{\dim_{\Kb}(\cW \cM)}{\dim_{\Kb}(\cW)}$ exists.
\end{corollary}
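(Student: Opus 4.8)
The plan is to deduce Corollary~\ref{C-Gromov} as a direct special case of Lemma~\ref{L-amenable extension relative}. First I would verify that the hypotheses of the setting are met: $\Kb$ is a unital ring with IBN, $S$ is a unital $\Kb$-algebra without zero-divisors which is right amenable over $\Kb$, and so by Example~\ref{E-k alg1} the collection $\cF$ of all finite-dimensional $\Kb$-linear subspaces of $S$ is a normal finite approximation system making $S$ a right amenable normal extension of $\Kb$. Since $\Kb$ is a field it has a unique Sylvester matrix rank function $\rk$, which is automatically $\sigma_g$-invariant for every $g\in\cU$ (there is nothing to check, as there is only one rank function), so $S$ is a right amenable normal extension of $\Kb$ preserving $\rk$, and $\dim_{\cF}$ from Lemma~\ref{L-amenable extension relative} is available.

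Next I would identify the bivariant Sylvester module rank function corresponding to $\rk$: as noted in the paragraph preceding the corollary, for $\Kb$ a field it is simply $\dim(\cM_1|\cM_2)=\dim_\Kb(\cM_1)$ (with the convention $\dim_\Kb=\infty$ for infinite-dimensional spaces), because this function manifestly satisfies the six axioms of Definition~\ref{D-bivariant} and recovers $\rk(A)=\dim_\Kb(\Kb^nA)=\rk(A)$ on matrices. In particular, for $\cW\in\hat\cF$ and $\cM\subseteq\cN$ finite-dimensional, $\cW\cM$ is a finite-dimensional $\Kb$-subspace of $\cN$, so $\dim(\cW\cM|\cN)=\dim_\Kb(\cW\cM)$.

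Finally I would apply Lemma~\ref{L-amenable extension relative} with $\cN_1=\cN_2=\cN$ (the given finitely generated left $S$-module), $\cM_1=\cM$ (the given finite-dimensional $\Kb$-subspace generating $\cN$ as an $S$-module). The lemma gives
$$\lim_\cW\frac{\dim(\cW\cM|\cN)}{\dim(\cW)}=\inf_{\cV\in\hat\cF}\frac{\dim(\cV\cM|\cN)}{\dim(\cV)},$$
and translating via $\dim(\cW\cM|\cN)=\dim_\Kb(\cW\cM)$ and $\dim(\cW)=\dim_\Kb(\cW)$ yields that $\lim_\cW\frac{\dim_\Kb(\cW\cM)}{\dim_\Kb(\cW)}$ exists (and equals the stated infimum). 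I do not anticipate a genuine obstacle here — the entire content was already packed into Lemma~\ref{L-amenable extension relative} and Lemma~\ref{L-infimum rule}; the only care needed is the bookkeeping to confirm that the abstract $\dim(\cdot|\cdot)$ specializes to $\dim_\Kb$ in the field case and that the "preserving $\rk$" hypothesis is vacuous for a field. This is also the natural place to insert Remark~\ref{R-Gromov} recording that this answers Gromov's question affirmatively in the domain case.
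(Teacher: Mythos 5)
Your proposal is correct and follows exactly the paper's route: specialize Example~\ref{E-k alg1} to make $S$ a right amenable normal extension of $\Kb$ preserving the unique rank function, identify the bivariant rank function with $\dim_\Kb$, and apply Lemma~\ref{L-amenable extension relative} with $\cN_1=\cN_2=\cN$ and $\cM_1=\cM$. The paper treats this as an immediate consequence in just this way, so no further comment is needed.
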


\begin{remark} \label{R-Gromov}
In \cite[page 348]{Gromov99} Gromov asked whether the limit $\lim_\cW \frac{\dim_{\Kb}(\cW \cM)}{\dim_{\Kb}(\cW)}$ exists for every unital right amenable $\Kb$-algebra $S$.
In \cite[page 477]{Elek03a} Elek constructed an example showing that in general the limit does not exist. In this example, $S$ is the unital $\Kb$-algebra generated by $x$ and $y$ subject to $x^2=0$ and $xy=0$. The left $S$-module $\cN$ is the $S$-submodule of $S$ generated by $x$ and $y$, and one can take $\cM=\Kb x+\Kb y$. Corollary~\ref{C-Gromov} answers Gromov's question affirmatively in the case $S$ is a domain.
\end{remark}

Theorem~\ref{T-main infimum for matrix} is part of the following result.

\begin{theorem} \label{T-infimum for matrix}
Let $A\in M_{n, m}(S)$. Then
$$ \lim_\cW\frac{\rk_\cW(A)}{\dim(\cW)}=\inf_{\cV\in \hat{\cF}}\frac{\rk_\cV(A)}{\dim(\cV)}=\dim_{\cF}(S^nA|S^m).$$
\end{theorem}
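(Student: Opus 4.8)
The plan is to deduce Theorem~\ref{T-infimum for matrix} by combining the bivariant description of $\rk_\cW$ from Lemma~\ref{L-matrix vs module} with the linear infimum rule established in Lemma~\ref{L-amenable extension relative}. The key observation is that $\rk_\cW(A)$ is, up to the module-theoretic bookkeeping, nothing other than $\dim(\cW^n A\mid S^m)$, so the whole statement should reduce to an instance of \eqref{E-amenable extension relative}.

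First I would identify the right $S$-module picture. Consider the left $S$-module $\cN_2 = S^m$ and the left $S$-submodule $\cN_1 = S^n A$ generated (as an $S$-module) by the rows of $A$, i.e. by the finitely generated $R$-submodule $\cM_1 := R^n A \subseteq S^m$; note $\cN_1$ is finitely generated over $S$ and $\cM_1$ generates it as an $S$-module. By Lemma~\ref{L-amenable extension relative}, $\dim_{\cF}(\cN_1\mid \cN_2)$ is well defined and equals both $\lim_\cW \frac{\dim(\cW\cM_1\mid S^m)}{\dim(\cW)}$ and $\inf_{\cV\in\hat\cF}\frac{\dim(\cV\cM_1\mid S^m)}{\dim(\cV)}$. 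So it remains only to check that $\dim(\cW\cM_1\mid S^m) = \rk_\cW(A)$ for every $\cW\in\hat\cF$. But $\cW\cM_1 = \cW(R^n A) = (\cW R^n)A = \cW^n A$ (using that $\cW$ is a left $R$-module and $R^n A \subseteq S^m$ has entries that get multiplied on the left), and Lemma~\ref{L-matrix vs module} says precisely that for a suitable $\tilde\cW\in\hat\cF$ with $\cW A_{i,j}\subseteq\tilde\cW$, choosing bases and forming $B$ via \eqref{E-matrix}, one has $\rk(B) = \dim(\cW^n A\mid S^m)$; and by definition $\rk_\cW(A) = \rk(B)$. Hence $\rk_\cW(A) = \dim(\cW^n A\mid S^m) = \dim(\cW\cM_1\mid S^m)$ for all $\cW\in\hat\cF$, and the three quantities in the theorem coincide term by term.

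The only point requiring a little care is the identification $\cW\cM_1 = \cW^n A$ together with the fact that $\cW\cM_1$ is the value of the module being measured in Lemma~\ref{L-amenable extension relative}: one must make sure the $R$-submodule $\cW(R^nA)$ of $S^m$ is exactly $\{\sum_i w_i (e_i A) : w_i\in\cW\}$ where $e_i$ are the standard basis vectors, which is $\{(w_1,\dots,w_n)A : w_j\in\cW\} = \cW^n A$, a finitely generated left $R$-submodule of $S^m$ since $\cW$ is finitely generated free over $R$. I expect the main (minor) obstacle to be purely notational: matching the convention ``$A$ acts on the right of row vectors'' in the construction of $\rk_\cW$ with the ``$\cV\cM_1$'' notation of Lemma~\ref{L-amenable extension relative}, and confirming that the independence of $\rk_\cW(A)$ from the auxiliary choice of $\tilde\cW$ (already in Lemma~\ref{L-matrix vs module}) lines up with the independence of $\dim_{\cF}$ from the choice of $\cM_1$. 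Once these identifications are in place, no further computation is needed: Theorem~\ref{T-infimum for matrix}, and hence Theorem~\ref{T-main infimum for matrix}, follows immediately from Lemma~\ref{L-amenable extension relative} applied to $(\cN_1,\cN_2)=(S^nA,\,S^m)$ with $\cM_1 = R^n A$.
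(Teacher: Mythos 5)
Your proposal is correct and follows essentially the same route as the paper: the paper's proof likewise sets $\cM_1$ to be the left $R$-submodule of $S^m$ generated by the rows of $A$, uses Lemma~\ref{L-matrix vs module} to identify $\rk_\cW(A)=\dim(\cW^nA|S^m)=\dim(\cW\cM_1|S^m)$, and then applies Lemma~\ref{L-amenable extension relative} with $(\cN_1,\cN_2)=(S^nA,S^m)$. The only point you flag as delicate, $\cW\cM_1=\cW^nA$, is indeed fine here because the standing normality hypothesis gives $\cW R=\cW$ (each $\cW$ has a basis in $\cU\subseteq N_S(R)$), so the two arguments coincide.
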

\begin{proof} Denote by $\cM_1$ the left $R$-submodule of $S^m$ generated by the rows of $A$.
For each $\cW\in \hat{\cF}$, clearly $\cW^nA=\cW\cM_1$, and hence by Lemma~\ref{L-matrix vs module} we have
$$\rk_\cW(A)=\dim(\cW^nA|S^m)=\dim(\cW\cM_1|S^m).$$
Put $\cN_1=S^nA$ and $\cN_2=S^m$. Now the theorem follows from Lemma~\ref{L-amenable extension relative}.
\end{proof}

\section{Continuity of extension} \label{S-continuity}

In this section we prove Theorem~\ref{T-continuity}, establishing the continuity of the map $\rk\mapsto \rk_{\cF}$. For this purpose we need to assume some linear quasitiling property. To motivate this property, we recall first the Ornstein-Weiss quasitiling theorem for amenable groups \cite{OW} \cite[Theorem 4.36]{KL16}.

\begin{theorem} \label{T-OW quasitiling}
Let $\Gamma$ be an amenable group. Let $0<\varepsilon<1$ and $K\in \hat{\cF}(\Gamma)$. Then there are some $n\in \Nb$, $(K, \varepsilon)$-invariant  $F_1, \dots, F_n\in \hat{\cF}(\Gamma)$, $\delta>0$ and $K'\in \hat{\cF}(\Gamma)$ such that every $(K', \delta)$-invariant  $F\in \hat{\cF}(\Gamma)$ can be $\varepsilon$-quasitiled by $F_1, \dots, F_n$ in the sense that there are $C_1, \dots, C_n\subseteq F$ and $F_{j, c}\subseteq G$ for each $1\le j\le n$ and $c\in C_j$ satisfying the following conditions:
\begin{enumerate}
\item For each $1\le j\le n$ and $c\in C_j$, one has $F_{j, c}\subseteq F_j$ and $|F_{j, c}|\ge (1-\varepsilon)|F_j|$.
\item The sets $cF_{j, c}$ for $1\le j\le n$ and $c\in C_j$ are pairwise disjoint.
\item $\bigcup_{1\le j\le n}\bigcup_{c\in C_j}c F_j\subseteq F$ and $\big|\bigcup_{1\le j\le n}\bigcup_{c\in C_j}c F_j\big|\ge (1-\varepsilon)|F|$.
\end{enumerate}
\end{theorem}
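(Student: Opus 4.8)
This is the classical Ornstein--Weiss quasitiling theorem \cite{OW}; the plan is to reproduce its proof, in the streamlined form of \cite[Section 4.6]{KL16}.

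I would first set up the tiles by a multi-scale F{\o}lner construction. Fix $0<\varepsilon<1$ and $K\in\hat{\cF}(\Gamma)$, and after enlarging $K$ assume $e_\Gamma\in K=K^{-1}$. Choose $n\in\Nb$ with $(1-\varepsilon/2)^n<\varepsilon$. Then build $F_1,\dots,F_n\in\hat{\cF}(\Gamma)$ together with auxiliary reals $\delta_1>\dots>\delta_n>0$ recursively: given $F_1,\dots,F_{i-1}$, use amenability of $\Gamma$ to choose a $(K_i,\delta_i)$-invariant $F_i$, where $K_i:=K\cup\bigcup_{l<i}(F_l\cup F_l^{-1})$ and $\delta_i$ is taken small enough --- depending only on $\varepsilon$, $n$, and $|F_1|,\dots,|F_{i-1}|$ --- to kill the error terms below; since $\delta_i<\varepsilon$, each $F_i$ is $(K,\varepsilon)$-invariant, as required. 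Finally put $K':=K\cup\bigcup_{i\le n}(F_i\cup F_i^{-1})$ and keep a parameter $\delta>0$ to be fixed at the very end.

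The engine is the elementary greedy $\varepsilon$-packing lemma (\cite[Lemma 4.37]{KL16}): if $H\in\hat{\cF}(\Gamma)$ and $A$ is a finite set that is $(H^{-1},\delta')$-invariant with $\delta'|H|$ small, then any maximal $\varepsilon$-disjoint family of left translates $cH\subseteq A$ --- i.e. each $cH$ retaining a piece $\widehat{cH}$ with $|\widehat{cH}|\ge(1-\varepsilon)|H|$, the pieces pairwise disjoint, and no further translate addable --- has union $E=\bigcup_c cH$ with $|E|\ge(\varepsilon/2)|A|$; the proof is the standard double count, since maximality forces $|gH\cap E|>\varepsilon|H|$ for every $g$ with $gH\subseteq A$ (there are at least $|A|/2$ of them), against the trivial bound $\sum_g|gH\cap E|\le|E|\,|H|$. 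I would then run this lemma at the scales $F_n,F_{n-1},\dots,F_1$ in turn: set $U_n=F$, pack $F_n$-translates inside $U_n$ to get centres $C_n$ and covered set $E_n\subseteq U_n$, set $U_{n-1}=U_n\setminus E_n$, pack $F_{n-1}$-translates inside $U_{n-1}$, and so on. The point of the multi-scale choice above is that $U_i=F\setminus(E_{i+1}\cup\dots\cup E_n)$ is $F$ with a union of translates of the \emph{larger} tiles $F_{i+1},\dots,F_n$ removed; since each such $F_l$ ($l>i$) is $(F_i^{-1},\delta_l)$-invariant and $F$ is $(K',\delta)$-invariant with $F_i^{-1}\subseteq K'$, a short computation bounds $|U_iF_i^{-1}\setminus U_i|$ by $\big(\delta+\tfrac{2}{1-\varepsilon}\sum_{l>i}\delta_l\big)|F|$, which is $\le|U_i|/(2|F_i|)$ as long as $|U_i|\ge\varepsilon|F|$, so the packing lemma applies with $A=U_i$. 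Hence $|U_{i-1}|=|U_i|-|E_i|\le(1-\varepsilon/2)|U_i|$ at each stage, and after $n$ stages $|F\setminus(E_1\cup\dots\cup E_n)|\le(1-\varepsilon/2)^n|F|<\varepsilon|F|$ (if some $U_i$ already has size $<\varepsilon|F|$ one stops early). To read off the quasitiling, put $F_{j,c}:=c^{-1}\widehat{cF_j}\subseteq F_j$ for $c\in C_j$, so $|F_{j,c}|\ge(1-\varepsilon)|F_j|$, giving (i); the $cF_{j,c}=\widehat{cF_j}$ are pairwise disjoint within each scale by $\varepsilon$-disjointness and across scales because scale-$j$ translates lie in $U_j$, which is disjoint from all higher scales, giving (ii); and $\bigcup_{j,c}cF_j\subseteq F$ equals $E_1\cup\dots\cup E_n$, of size $>(1-\varepsilon)|F|$, giving (iii), while the $F_j$ are $(K,\varepsilon)$-invariant by construction.

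The main obstacle --- and really the only substantive point --- is the parameter bookkeeping: $n$ must be chosen first, then tiles $F_1,\dots,F_n$ of extremely rapidly increasing invariance relative to all smaller tiles, and finally $F$ more invariant still, so that the accumulated boundary errors $|F_i|\delta$ and $\sum_l\delta_l|F_i|$ stay negligible compared to $\varepsilon|U_i|$ at every one of the $n$ stages. Extracting the clean geometric decay $|U_{i-1}|\le(1-\varepsilon/2)|U_i|$ from this hierarchy is the heart of the Ornstein--Weiss argument; everything else is the one-line greedy-packing double count.
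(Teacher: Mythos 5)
The paper offers no proof of this statement at all --- it is quoted as a known result with citations to \cite{OW} and \cite[Theorem 4.36]{KL16} --- and your sketch correctly reproduces the standard Ornstein--Weiss argument in precisely the streamlined form of the cited reference: a greedy $\varepsilon$-disjoint packing lemma proved by a double count, applied at $n$ successively smaller scales $F_n,\dots,F_1$, with the tiles chosen so that each $F_i$ is extremely invariant relative to all smaller tiles and with $n$ fixed in advance by $(1-\varepsilon/2)^n<\varepsilon$. The only quibbles are cosmetic --- the $F_i$ versus $F_i^{-1}$ convention in the invariance hypothesis of the packing lemma (harmless, since your $K_i$ and $K'$ contain both $F_l$ and $F_l^{-1}$) and the unverified ``short computation'' bounding $|U_iF_i\setminus U_i|$ --- neither of which affects the structure of the argument.
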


Theorem~\ref{T-OW quasitiling} says that one can choose the shapes $F_1, \dots, F_n$ to be as right invariant as one wants (i.e. $(K, \varepsilon)$-invariant), so that any sufficiently right invariant $F$ (i.e. $(K', \delta)$-invariant) is almost a disjoint union of  translates of $F_1, \dots, F_n$ (i.e. $cF_j$). When $\Gamma=\Zb$, one can take $n=1$ and $F_1=\{0, 1, \dots, N-1\}$ for sufficiently large $N$, and take $C_1$ to be the set of $c\in N\Zb$  satisfying $c+F_1\subseteq F$.  In general, if $\Gamma$ is {\it monotileable} in the sense that there is a right F{\o}lner sequence $\{K_m\}_{m\in \Nb}$ of $\Gamma$ such that for each $m\in \Nb$ one can write $\Gamma$ as a disjoint union of sets of the form $cF_m$ with $c\in \Gamma$, then one can always take $n=1$ and $F_1=K_m$ for sufficiently large $m$. Every amenable residually finite group is monotileable \cite{Weiss}. It is an open question whether every amenable group is   monotileable  or not.

Let $R$ be a unital ring with IBN, and let $S$ be a right amenable normal extension of $R$ with $\cF$ and $\cU$. Here is our linear version of a weak form of the quasitiling in Theorem~\ref{T-OW quasitiling}.

\begin{definition} \label{D-weak quasitiling}
We say that $(\cF, \cU)$ has the {\it weak quasitiling property} if for any $0<\varepsilon<1$ and $\cV\in \hat{\cF}$ there are some $n\in \Nb$ and $(\cV, \varepsilon)$-invariant $\cW_1, \dots, \cW_n\in \hat{\cF}$ such that for any $\delta>0$ and $\cV'\in \hat{\cF}$, there is a $(\cV', \delta)$-invariant $\cW\in \hat{\cF}$ which can be $\varepsilon$-quasitiled by $\cW_1, \dots, \cW_n$ in the sense that there are $C_1, \dots, C_n\subseteq \cU\cap \cW$ and $\cW_{j, c}\in \cF$ for each $1\le j\le n$ and $c\in C_j$ satisfying the following conditions:
\begin{enumerate}
\item For each $1\le j\le n$ and $c\in C_j$, one has $\cW_{j, c}\subseteq \cW_j$ and $\dim(\cW_{j, c})\ge (1-\varepsilon)\dim(\cW_j)$.
\item The left $R$-modules $c\cW_{j, c}$ for $1\le j\le n$ and $c\in C_j$ are linearly independent.
\item $\sum_{1\le j\le n}\sum_{c\in C_j}c\cW_j\subseteq \cW$ and
$\dim(\sum_{1\le j\le n}\sum_{c\in C_j}c\cW_j)\ge (1-\varepsilon)\dim(\cW)$.
\end{enumerate}
\end{definition}

\begin{example} \label{E-crossed product2}
Let $\Gamma, R, R*\Gamma, \cF$ and $\cU$ be as in Example~\ref{E-crossed product1}. If $\Gamma$ is amenable, then from Theorem~\ref{T-OW quasitiling} we know that $(\cF, \cU)$ has the weak quasitiling property. In fact, Theorem~\ref{T-OW quasitiling} implies that $(\cF, \cU)$  satisfies some property stronger than the weak quasitiling property, namely for any $0<\varepsilon<1$ and $\cV\in \hat{\cF}$ there are some $n\in \Nb$, $(\cV, \varepsilon)$-invariant $\cW_1, \dots, \cW_n\in \hat{\cF}$, $\delta>0$ and $\cV'\in \hat{\cF}$ such that every $(\cV', \delta)$-invariant $\cW\in \hat{\cF}$ can be $\varepsilon$-quasitiled by $\cW_1, \dots, \cW_n$.
\end{example}

\begin{lemma} \label{L-field extension amenable}
Let $\Kb$ be a field and $\Eb$ a field containing $\Kb$. Denote by $\hat{\cF}$ the set of nonzero finite-dimensional $\Kb$-linear subspaces of $\Eb$.
For any $\cV\in \hat{\cF}$ and $\varepsilon>0$, there is some $(\cV, \varepsilon)$-invariant $\cW_1\in \hat{\cF}$ such that for any $\cV'\in \hat{\cF}$ and $\delta>0$ there are some
$(\cV', \delta)$-invariant $\cW\in \hat{\cF}$ and a finite set $C_1$ of nonzero elements in $\cW$ with $\cW=\bigoplus_{c\in C_1}c\cW_1$.
\end{lemma}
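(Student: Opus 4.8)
The plan is to realize $\cW_1$ as a ``monomial box'' inside the subfield $\Kb(\cV)$ generated by $\cV$, and then, for a given $\cV'$, to realize a much larger monomial box inside $\Kb(\cV\cup\cV')$ as an honest direct sum of translates of $\cW_1$; invariance in both steps will be read off from the fact that the number of monomials of bounded degree grows only polynomially.

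First I would fix a transcendence basis $y_1,\dots,y_d$ of $L:=\Kb(\cV)$ over $\Kb$, so that $L$ is a finite extension of the rational function field $F:=\Kb(y_1,\dots,y_d)$, together with an $F$-basis $1=\eta_1,\dots,\eta_m$ of $L$. Writing the elements of a $\Kb$-basis of $\cV$, and the structure constants $\eta_i\eta_j=\sum_l a_{ij}^l\eta_l$, in terms of this $F$-basis produces a single nonzero $g\in\Kb[y_1,\dots,y_d]$ clearing all the resulting denominators and a degree bound, all intrinsic to $\cV$. I would then let $\cW_1$ be the $\Kb$-span of the elements $\eta_i\,y_1^{a_1}\cdots y_d^{a_d}/g$ with $1\le i\le m$ and $0\le a_k<N$, for $N$ large; since $\{\eta_i\}$ is $F$-independent and distinct monomials in the $y_k$ are $\Kb$-independent, $\dim_\Kb\cW_1=mN^d$, and if $N>\deg g$ then $1\in\cW_1$. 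A routine common-denominator computation puts $\cW_1+\cW_1\cV$ inside the analogous span with $N$ replaced by $N+e_0$ (and $g$ by $g^3$), where $e_0$ is independent of $N$, so $\dim_\Kb(\cW_1+\cW_1\cV)\le (1+e_0/N)^d\dim_\Kb\cW_1$; choosing $N$ large makes $\cW_1$ $(\cV,\varepsilon)$-invariant.

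Next, given $\cV'\in\hat{\cF}$ and $\delta>0$, I would extend $y_1,\dots,y_d$ to a transcendence basis $y_1,\dots,y_d,z_1,\dots,z_p$ of $\Lb:=\Kb(\cV\cup\cV')$. Then $\Lb$ is finite over $F':=F(z_1,\dots,z_p)$; since the $z_j$ are algebraically independent over $L$, the fields $L$ and $F'$ are linearly disjoint over $F$, so $\eta_1,\dots,\eta_m$ remains an $F'$-basis of $L(z_1,\dots,z_p)$, and fixing an $L(z_1,\dots,z_p)$-basis $1=\psi_1,\dots,\psi_{m'}$ of $\Lb$ gives an $F'$-basis $\{\psi_l\eta_i\}$ of $\Lb$. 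I would take $\cW$ to be the $\Kb$-span of the $\psi_l\eta_i\,y^{\underline a}z^{\underline b}/g$ with $0\le a_k<NL_0$ and $0\le b_j<L_0'$, for $L_0,L_0'$ large, and $C_1:=\{\psi_l\,y^{N\underline j}z^{\underline b'}:1\le l\le m',\ 0\le j_k<L_0,\ 0\le b'_j<L_0'\}$. Because $N\underline j+\underline a$ sweeps $\{0,\dots,NL_0-1\}^d$ bijectively as $(\underline j,\underline a)$ ranges over $\{0,\dots,L_0-1\}^d\times\{0,\dots,N-1\}^d$, each translate $c\cW_1$ with $c\in C_1$ is the $\Kb$-span of a distinct slice of the $\Kb$-independent family $\{\psi_l\eta_i y^{\underline\gamma}z^{\underline b}/g\}$; hence $\cW=\bigoplus_{c\in C_1}c\cW_1$, with $C_1$ a finite set of nonzero elements of $\cW$ (membership using $N>\deg g$ and $\psi_1=1$). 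Finally, the same estimate carried out over $F'$, with a nonzero $D$ and a constant $E_0$ intrinsic to $\cV'$, gives $\dim_\Kb(\cW+\cW\cV')\le(1+E_0/(NL_0))^d(1+E_0/L_0')^p\dim_\Kb\cW$, which is $\le(1+\delta)\dim_\Kb\cW$ once $L_0,L_0'$ are large, so $\cW$ is $(\cV',\delta)$-invariant.

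The hard part — and the reason one cannot simply take $\cW_1$ to be a subfield — is the two-layer nature of the statement: $\cW_1$ is pinned down by $\cV$ alone, hence must be built from the internal structure of $\Kb(\cV)$ (a transcendence basis and an $F$-vector-space basis of $\Kb(\cV)$), whereas the demand that $\cW$ be an \emph{exact} direct sum $\bigoplus_c c\cW_1$ forces a rigid monomial bookkeeping that has to survive the base change from $\Kb(\cV)$ to $\Kb(\cV\cup\cV')$. This is precisely where linear disjointness of $\Kb(\cV)$ from the new transcendental directions $\Kb(z_1,\dots,z_p)$ is used: it is what makes the fixed bases $\eta_1,\dots,\eta_m$ and then $\psi_1,\dots,\psi_{m'}$ remain bases after adjoining the $z_j$, so that the monomial count multiplies cleanly and no error terms appear in the tiling. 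The degenerate cases $d=0$ (that is, $\Kb(\cV)/\Kb$ finite) and $m=1$ are subsumed by the same formulas with empty monomial tuples or a single $\eta$.
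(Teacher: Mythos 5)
Your proof is correct and follows essentially the same strategy as the paper's: take $\cW_1$ to be a ``monomial box'' $\sum_i \eta_i\cdot(\text{polynomials of bounded multidegree})$ built from a transcendence basis and a finite algebraic part with a common denominator, verify invariance by the polynomial growth of the monomial count, and tile a larger box by translating degrees in multiples of $N$. The only differences are organizational (you build transcendence bases locally for $\Kb(\cV)$ and extend them, and fold the denominator $1/g$ into $\cW_1$, whereas the paper fixes one maximal algebraically independent subset of $\Eb$ and treats the algebraic case separately), and your handling of the degenerate cases is sound.
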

\begin{proof} If $\Eb$ is algebraic over $\Kb$, then we may take $\cW_1$ to be the subfield of $\Eb$ generated by $\Kb\cup \cV$, $\cW$ to be the subfield of $\Eb$ generated by $\Kb\cup \cV\cup \cV'$, and $C_1$ to be a $\cW_1$-basis of $\cW$. Thus we may assume that $\Eb$ is not algebraic over $\Kb$.

We may assume that $1_{\Kb}\in \cV$.

Let $X$ be a maximal set of elements in $\Eb$ being algebraically independent over $\Kb$. Then $X$ is nonempty. Denote by $\Eb_1$ the subfield of $\Eb$ generated by $X$ and $\Kb$. Then $\Eb$ is algebraic over $\Eb_1$. We may identify $\Eb_1$ with the field of rational functions with coefficients in $\Kb$ and indeterminants in $X$. Denote by $P$ the set of all polynomials with coefficients in $\Kb$ and indeterminants in $X$.

Take a finite field extension $\Fb$ of $\Eb_1$ contained in $\Eb$ such that $\cV\subseteq \Fb$. Take a $\Kb$-basis $A$ for $\cV$, and take an $\Eb_1$-basis $B$ for $\Fb$ with $1_{\Eb}\in B$. Then we can find some nonzero $h\in P$ such that
for any $a\in A$ and $b\in B$ one has $hba=\sum_{b'\in B}f_{a, b, b'}b'$ for some $f_{a, b, b'}\in P$.
Let $Y$ be a finite subset of $X$ containing all the indeterminants appearing in $f_{a, b, b'}$ for all $a\in A$ and $b, b'\in B$.
Denote by $m$ the maximum of the degree of each $y\in Y$ in $f_{a, b, b'}$ for $a$ ranging over elements of $A$ and $b, b'$ ranging over elements of $B$.
For each $n\in \Nb$, denote by $Q_{Y, n}$ the set of polynomials with coefficients in $\Kb$ and indeterminants in $Y$ and degree at most $n-1$ in each $y\in Y$.
Then $\dim_{\Kb}Q_{Y, n}=(1+n)^{|Y|}$.
Take $n\in \Nb$ large enough such that
$\dim_{\Kb}(Q_{Y, n+m})\le (1+\varepsilon)\dim_{\Kb}(Q_{Y, n-1})$.
Note that $f_{a, b, b'}Q_{Y, n-1}\subseteq Q_{Y, n+m}$ for all $a\in A$ and $b, b'\in B$.
Set $\cW_1=\sum_{b\in B}bQ_{Y, n-1}\in \hat{\cF}$. Then
$$ h\cW_1\cV= h\sum_{b\in B}bQ_{Y, n-1}\sum_{a\in A}\Kb a\subseteq\sum_{a\in A}\sum_{b, b'\in B}f_{a, b, b'}b'Q_{Y, n-1}\subseteq \sum_{b'\in B}b'Q_{Y, n+m},$$
and hence
\begin{align*}
\dim_{\Kb}(\cW_1\cV)&=\dim_{\Kb}(h\cW_1\cV)\\
&\le \dim_{\Kb}(\sum_{b\in B}bQ_{Y, n+m})\\
&=|B|\dim_{\Kb}(Q_{Y, n+m})\\
&\le (1+\varepsilon)|B|\dim_{\Kb}(Q_{Y, n-1})\\
&=(1+\varepsilon)\dim_{\Kb}(\cW_1).
\end{align*}
Thus $\cW_1$ is $(\cV, \varepsilon)$-invariant.

Let $\cV'\in \hat{\cF}$  containing $1_{\Kb}$ and $\delta>0$. Then we have $\Fb', A', B', h', Y', m', n', Q_{Y', n'-1}, Q_{Y', n'+m'}$ and $\cW'_1$ as above for $\cV'$ and $\delta$. We may choose $\Fb'\supseteq \Fb$. Take an $\Fb$-basis $Z$ for $\Fb'$ containing $1_{\Eb}$.  We may choose $B'$ to the set of $bz$ for all $b\in B$ and $z\in Z$. We may also choose $Y'\supseteq Y$ and $n'$ to be a multiple of $n$.
Denote by $C_1$ the set of $gz$ for all $z\in Z$ and monomials $g$ in $Y'$ such that in $g$ each $y\in Y$ has degree $\ell_yn$ for some integer $0\le \ell_y<n'/n$ and each $y'\in Y'\setminus Y$ has degree at most $n'-1$. Then
\begin{align*}
\cW'_1=\bigoplus_{b\in B, z\in Z}bzQ_{Y', n'-1}=\bigoplus_{b\in B}b\bigoplus_{z\in Z}zQ_{Y', n'-1}=\bigoplus_{b\in B}b\bigoplus_{c\in C_1}cQ_{Y, n-1}=\bigoplus_{c\in C_1}c\cW_1.
\end{align*}
Since $1_{\Kb}\in B$, we have $C_1\subseteq \cW'_1$.
Putting $\cW=\cW'_1$ finishes the proof.
\end{proof}

\begin{example} \label{E-field extension2}
Let $\Kb, \Eb, R, \cF$ and $\cU$ be as in Example~\ref{E-field extension1}.
It follows from Lemma~\ref{L-field extension amenable} that $(\cF, \cU)$ has the weak quasitiling property.
\end{example}

\begin{question} \label{Q-quasitiling}
Does every right amenable normal extension have the weak quasitiling property?
\end{question}

For any $\cV\in \hat{\cF}$, we say $\cW_1, \cW_2\in \cF$ are {\it $\cV$-separated} if $\cW_1\cV\cap \cW_2\cV=\{0\}$.

\begin{lemma} \label{L-continuous limit}
Let $C>0$ and $\cV\in \hat{\cF}$. Denote by $\Phi_{C, \cV}$ the set of functions $\varphi: \cF\rightarrow \Rb$ satisfying the conditions (i)-(iii) in Lemma~\ref{L-infimum rule} and
\begin{enumerate}
\item[(iv)] For any $\cV$-separated $\cW_1,\cW_2\in \cF$, one has $\varphi(\cW_1+\cW_2)=\varphi(\cW_1)+\varphi(\cW_2)$,
\item[(v)] $\varphi(g\cW)=\varphi(\cW)$ for all $\cW\in \cF$ and $g\in \cU$,
\item[(vi)] $\varphi(R)\le C$,
\item[(vii)] $\varphi(\cW_1)\le \varphi(\cW_2)$ for all $\cW_1, \cW_2\in \cF$ with $\cW_1\subseteq \cW_2$.
\end{enumerate}
Equip $\Phi_{C, \cV}$ with the pointwise convergence topology. Assume that $(\cF, \cU)$ has the weak quasitiling property.
Then the function $\varphi\mapsto \lim_\cW\frac{\varphi(\cW)}{\dim(\cW)}$ on $\Phi_{C, \cV}$ is continuous.
\end{lemma}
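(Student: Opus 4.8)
The plan is to fix $\varepsilon>0$ and a base function $\varphi_0\in \Phi_{C,\cV}$, and exhibit a pointwise-open neighborhood of $\varphi_0$ on which $\lim_\cW \frac{\varphi(\cW)}{\dim(\cW)}$ differs from $\lim_\cW\frac{\varphi_0(\cW)}{\dim(\cW)}$ by at most a fixed multiple of $\varepsilon$. The key idea is that the weak quasitiling property lets us compute the limit, \emph{uniformly over all of $\Phi_{C,\cV}$}, from the values of $\varphi$ on finitely many fixed modules $\cW_1,\dots,\cW_n$ (the tiles). So continuity reduces to the trivial fact that $\varphi\mapsto (\varphi(\cW_1),\dots,\varphi(\cW_n))$ is continuous for the pointwise topology.

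First I would use the weak quasitiling property applied to $\varepsilon$ and to the given $\cV$: it produces $n\in\Nb$ and $(\cV,\varepsilon)$-invariant $\cW_1,\dots,\cW_n\in\hat\cF$ such that for all $\delta>0$ and $\cV'\in\hat\cF$ there is a $(\cV',\delta)$-invariant $\cW\in\hat\cF$ that is $\varepsilon$-quasitiled by $\cW_1,\dots,\cW_n$, with tiles $c\cW_{j,c}$ ($c\in C_j\subseteq\cU\cap\cW$) that are linearly independent, each $c\cW_{j,c}$ filling at least a $(1-\varepsilon)$-fraction of $c\cW_j$, and the total $\sum_{j,c}c\cW_j$ filling at least a $(1-\varepsilon)$-fraction of $\cW$. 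For such a $\cW$, set $\cW^\circ=\sum_{1\le j\le n}\sum_{c\in C_j}c\cW_{j,c}$. By linear independence of the pieces and property (iv) (applied repeatedly, after checking the pieces are pairwise $\cV$-separated — here one uses that distinct tiles $c\cW_{j,c}$ already lie in linearly independent submodules and, by choosing $\cV'\supseteq$ a suitable product, that their $\cV$-expansions stay disjoint) together with (v), one gets
\begin{align*}
\varphi(\cW^\circ)=\sum_{1\le j\le n}\sum_{c\in C_j}\varphi(c\cW_{j,c})=\sum_{1\le j\le n}\sum_{c\in C_j}\varphi(\cW_{j,c}).
\end{align*}
Next, using (vii), $0\le\varphi(\cW_j)-\varphi(\cW_{j,c})\le \varphi(\cW_j)\le C\dim(\cW_j)$ is too crude; instead bound $\varphi(\cW_j)-\varphi(\cW_{j,c})$ via the splitting $\cW_j=\cW_{j,c}\oplus\cW_{j,c}'$ with $\dim(\cW_{j,c}')\le\varepsilon\dim(\cW_j)$ and (iii)+(vi)+(vii) to get $\varphi(\cW_j)-\varphi(\cW_{j,c})\le C\varepsilon\dim(\cW_j)$. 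Similarly $\varphi(\cW)-\varphi(\cW^\circ)\le C\varepsilon\dim(\cW)$ from (iii),(vi),(vii) applied to $\cW=\cW^\circ\oplus(\text{complement})$. Combining, and writing $\dim(\cW^\circ)=\sum_{j,c}\dim(\cW_{j,c})$, one sees that $\frac{\varphi(\cW)}{\dim(\cW)}$ is within $O(C\varepsilon)$ of a convex-combination-type average of the numbers $\frac{\varphi(\cW_j)}{\dim(\cW_j)}$, $1\le j\le n$, with weights $p_j=\frac{\dim(C_j)\dim(\cW_j)}{\dim(\cW)}$ independent of $\varphi$ (they depend only on $\cW$ and the tiling). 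Since by Lemma~\ref{L-infimum rule} the quantity $\ell(\varphi):=\lim_\cW\frac{\varphi(\cW)}{\dim(\cW)}$ exists and equals the infimum, we may on the one hand take $\cW$ as above arbitrarily right-invariant so that $\frac{\varphi(\cW)}{\dim(\cW)}\to\ell(\varphi)$; on the other hand each $\frac{\varphi(\cW_j)}{\dim(\cW_j)}\ge\ell(\varphi)$, and $(\cV,\varepsilon)$-invariance of the $\cW_j$ forces $\frac{\varphi(\cW_j)}{\dim(\cW_j)}$ close to $\ell(\varphi)$ when $\varepsilon$ is small — more precisely one runs the infimum-rule argument to conclude $\frac{\varphi(\cW_j)}{\dim(\cW_j)}\le\ell(\varphi)+\psi(\varepsilon)$ for some modulus $\psi$ with $\psi(\varepsilon)\to0$, uniformly in $\varphi\in\Phi_{C,\cV}$ because the bounds only involve the constant $C$ and $\dim(\cV)$. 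Hence $|\ell(\varphi)-\frac1n\sum_{j}\text{(something)}|$ and in fact $\ell(\varphi)$ itself is pinned, up to $O_C(\varepsilon)$, by the finite data $\big(\varphi(\cW_1),\dots,\varphi(\cW_n)\big)$, via the explicit formula above with the $\varphi$-independent weights.

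To finish: given $\varphi_0$ and a target precision $\eta>0$, choose $\varepsilon$ small enough (depending on $C$, $\dim(\cV)$, $\eta$) that the quasitiling estimates give $|\ell(\varphi)-\Theta(\varphi(\cW_1),\dots,\varphi(\cW_n))|<\eta/3$ for \emph{every} $\varphi\in\Phi_{C,\cV}$, where $\Theta$ is a fixed continuous (indeed affine) function of the $n$ coordinates. Then $\{\varphi:\ |\varphi(\cW_j)-\varphi_0(\cW_j)|<\rho \text{ for } j=1,\dots,n\}$ is a pointwise-open neighborhood of $\varphi_0$ in $\Phi_{C,\cV}$, and for $\rho$ small it forces $|\Theta(\varphi(\cW_\bullet))-\Theta(\varphi_0(\cW_\bullet))|<\eta/3$, whence $|\ell(\varphi)-\ell(\varphi_0)|<\eta$. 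This proves continuity.

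\textbf{Main obstacle.} The delicate point is making the quasitiling estimate \emph{uniform in $\varphi$}: one must check that every error term in passing from $\varphi(\cW)$ to $\sum_{j,c}\varphi(\cW_{j,c})$ and then to the weighted average of the $\frac{\varphi(\cW_j)}{\dim(\cW_j)}$ is controlled purely by $C$, $\dim(\cV)$, and $\varepsilon$ — never by $\varphi$-dependent quantities — and that the infimum-rule-type bound $\frac{\varphi(\cW_j)}{\dim(\cW_j)}\le\ell(\varphi)+\psi(\varepsilon)$ has a modulus $\psi$ independent of $\varphi\in\Phi_{C,\cV}$. This is where properties (vi) (the uniform bound $\varphi(R)\le C$) and (vii) (monotonicity) do the essential work, converting ``small codimension'' of a submodule into a uniformly small change in $\varphi$; one should also double-check that the pieces produced by the quasitiling can be arranged to be pairwise $\cV$-separated (not merely linearly independent) by enlarging $\cV'$ appropriately, so that additivity (iv) genuinely applies to the decomposition of $\cW^\circ$.
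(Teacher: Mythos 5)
Your plan has a genuine gap at its central step, namely the claim that $\ell(\varphi)$ is pinned down, uniformly over $\varphi\in\Phi_{C,\cV}$, by a \emph{fixed} affine functional $\Theta$ of the finitely many values $\varphi(\cW_1),\dots,\varphi(\cW_n)$. To get a two-sided estimate $|\ell(\varphi)-\Theta(\varphi(\cW_\bullet))|<\eta/3$ for \emph{every} $\varphi$ you need either (a) a single quasitiled $\cW$ (fixing the weights $p_j$) with $\frac{\varphi(\cW)}{\dim(\cW)}\le \ell(\varphi)+\eta$ for all $\varphi$ simultaneously, or (b) your asserted bound $\frac{\varphi(\cW_j)}{\dim(\cW_j)}\le \ell(\varphi)+\psi(\varepsilon)$ with a modulus $\psi$ independent of $\varphi$. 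Neither is justified. In the proof of the infimum rule (Lemma~\ref{L-infimum rule}), the degree of invariance needed for $\frac{\varphi(\cW)}{\dim(\cW)}$ to approach the infimum is $(\cV_0,\delta)$-invariance where $\cV_0$ is a near-minimizer of $\frac{\varphi(\cV_0)}{\dim(\cV_0)}$ and $\delta$ depends on $\dim(\cV_0)$; both depend on $\varphi$ and have nothing to do with the fixed $\cV$ of the lemma. So $(\cV,\varepsilon)$-invariance of the tiles $\cW_j$ gives you only the one-sided, trivial inequality $\frac{\varphi(\cW_j)}{\dim(\cW_j)}\ge\ell(\varphi)$, not closeness. (If you insist on reducing to finite data, the functional that does work is $\min_j\frac{\varphi(\cW_j)}{\dim(\cW_j)}$ together with a $\varphi$-dependent choice of $\cW$; but that is no longer the affine $\Theta$ with $\varphi$-independent weights you describe, and it is essentially a reorganization of the paper's argument.) The paper avoids the uniformity problem entirely by splitting into two semicontinuity statements: upper semicontinuity uses one test module $\cW^\flat$ chosen for the base point $\varphi_0$ and nothing else; lower semicontinuity chooses, for each nearby $\varphi'$, a $\cW$ sufficiently invariant \emph{for $\varphi'$} that is quasitiled by the fixed $\cW_j$, and then only needs $\varphi'(\cW_j)>\varphi_0(\cW_j)-\varepsilon$ together with $\varphi_0(\cW_j)\ge f(\varphi_0)\dim(\cW_j)$. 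No two-sided approximation of $\ell(\varphi)$ by the tile data is ever required.

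A secondary but real problem is your treatment of additivity. Condition (iv) applies to $\cV$-separated submodules, and the quasitiling only gives linear independence of the $c\cW_{j,c}$; enlarging $\cV'$ cannot help, since $\cV'$ controls the invariance of $\cW$, not the mutual position of the tiles, and the products $c\cW_{j,c}\cV$ may overlap no matter how invariant $\cW$ is. The correct fix (used in the paper) is to shrink each tile: replace $\cW_{j,c}$ by $\cW_{j,c}'=\bigcap_i\{w\in\cW_j: wv_i\in\cW_{j,c}\cap\cW_j v_i\}$, so that $c\cW_{j,c}'\cV\subseteq c\cW_{j,c}$ and linear independence of the $c\cW_{j,c}$ yields $\cV$-separation of the $c\cW_{j,c}'$; the loss of dimension is controlled by $\delta\dim(\cV)\dim(\cW_j)$ via Lemma~\ref{L-additivity for standard}, and then (iii), (vi), (vii) convert small codimension into a small change of $\varphi'$, exactly as you intend. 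With that fix your lower bound on $\varphi'(\cW)$ becomes the paper's, but the upper-bound half of your sandwich still rests on the unproved uniformity above, so the argument as proposed does not close.
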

\begin{proof} From the conditions (i) and (vii) we know that every $\varphi\in \Phi_{C, \cV}$ is $\Rb_{\ge 0}$-valued. For each $\varphi\in \Phi_{C, \cV}$, put $f(\varphi)=\lim_\cW\frac{\varphi(\cW)}{\dim(\cW)}=\inf_{\cW\in \hat{\cF}}\frac{\varphi(\cW)}{\dim(\cW)}\in \Rb_{\ge 0}$.

Let $\varphi\in \Phi_{C, \cV}$ and $\varepsilon>0$. Take a $\cW^\flat\in \hat{\cF}$ with $\frac{\varphi(\cW^\flat)}{\dim(\cW^\flat)}<f(\varphi)+\varepsilon$. Denote by $U_1$ the set of all $\varphi'\in \Phi_{C, \cV}$ satisfying $\varphi'(\cW^\flat)<\varphi(\cW^\flat)+\varepsilon$. This is an open neighborhood  of $\varphi$ in $\Phi_{C, \cV}$. For each $\varphi'\in U_1$, we have
$$ f(\varphi')\le \frac{\varphi'(\cW^\flat)}{\dim(\cW^\flat)}\le \frac{\varphi(\cW^\flat)}{\dim(\cW^\flat)}+\varepsilon\le f(\varphi)+2\varepsilon.$$

If $f(\varphi)=0$, then $f(\varphi')\ge f(\varphi)$ for all $\varphi'\in U_1$. Thus we may assume $f(\varphi)>0$ and $\varepsilon<f(\varphi)/4$. Take $\delta>0$ such that $2C\delta \dim(\cV)<\varepsilon$. Also take $0<\theta<\delta$ such that $(f(\varphi)-2\varepsilon)(1-\theta)>f(\varphi)-3\varepsilon$.
By assumption we can find some $n\in \Nb$ and $(\cV, \delta)$-invariant $\cW_1, \dots, \cW_n\in \hat{\cF}$ such that for any $\cV_1\in \hat{\cF}$ and $\delta_1>0$ there is some $(\cV_1, \delta_1)$-invariant $\cW\in \hat{\cF}$ which can be $\theta$-quasitiled by $\cW_1, \dots, \cW_n$. Denote by $U_2$ the set of all $\varphi'\in \Phi_{C, \cV}$ satisfying $\varphi'(\cW_j)>\varphi(\cW_j)-\varepsilon$ for all $1\le j\le n$. This is an open neighborhood  of $\varphi$ in $\Phi_{C, \cV}$. Let $\varphi'\in U_2$.
There are some $\cV_1\in \hat{\cF}$ and $\delta_1>0$ such that for any $(\cV_1, \delta_1)$-invariant $\cW\in \hat{\cF}$ one has
$\frac{\varphi'(\cW)}{\dim(\cW)}\le f(\varphi')+\varepsilon$. Take a $(\cV_1, \delta_1)$-invariant $\cW\in \hat{\cF}$ which can be $\theta$-quasitiled by $\cW_1, \dots, \cW_n$. Then we have $C_1, \dots, C_n$ and $\cW_{j, c}$ for $1\le j\le n$ and $c\in C_j$ as in Definition~\ref{D-weak quasitiling}.
Take an $R$-basis $v_1, \dots, v_{\dim(\cV)}$ of $\cV$ in $\cU$. For each $1\le j\le n$, $c\in C_j$, and $1\le i\le \dim(\cV)$, denote by $\cW_{j, c, i}$ the set of $w\in \cW_{j}$ satisfying $wv_i\in \cW_{j, c}\cap \cW_{j} v_i$. Then $\cW_{j, c, i}\in \cF$ and $\dim(\cW_{j, c, i})=\dim(\cW_{j, c}\cap \cW_{j} v_i)$. Note that
\begin{align*}
\dim(\cW_{j} v_i)-\dim(\cW_{j, c}\cap \cW_{j} v_i)&=\dim(\cW_{j, c}+\cW_j v_i)-\dim(\cW_{j, c})\\
&\le \dim(\cW_j+\cW_j\cV)-\dim(\cW_{j, c})\\
&\le (\delta+\theta) \dim(\cW_j)\\
&\le 2\delta \dim(\cW_j),
\end{align*}
where in the equality we apply Lemma~\ref{L-additivity for standard}.(i).
Put $\cW_{j, c}'=\bigcap_{1\le i\le \dim(\cV)}\cW_{j, c, i}\in \cF$. Then
\begin{align*}
 \dim(\cW_{j})-\dim(\cW_{j, c}')&\le \sum_{i=1}^{\dim(\cV)}(\dim(\cW_{j})-\dim(\cW_{j,c, i}))\\
 &=\sum_{i=1}^{\dim(\cV)}(\dim(\cW_{j} v_i)-\dim(\cW_{j, c}\cap \cW_{j} v_i))\\
 &\le 2\delta \dim(\cV)\dim(\cW_{j}),
 \end{align*}
 where in the first inequality we apply Lemma~\ref{L-additivity for standard}.(ii).
 We have $\cW_j=\cW_{j, c}'\oplus \cW_{j, c}''$ for some $\cW_{j, c}''\in \cF$.
 Then
\begin{align*}
\varphi'(\cW_j)&\le \varphi'(\cW_{j, c}')+\varphi'(\cW_{j, c}'')\\
&\le \varphi'(\cW_{j, c}')+\dim(\cW_{j, c}'')\varphi'(R)\\
&= \varphi'(\cW_{j, c}')+(\dim(\cW_{j})-\dim(\cW_{j, c}'))\varphi'(R)\\
&\le \varphi'(\cW_{j, c}')+2C\delta \dim(\cV)\dim(\cW_{j})\\
&\le  \varphi'(\cW_{j, c}')+\varepsilon\dim(\cW_{j}).
\end{align*}
 Note that $c\cW_{j, c}'\cV\subseteq c\cW_{j, c}$. Thus
 \begin{align*}
 \varphi'(\cW)&\ge  \varphi'(\sum_{1\le j\le n}\sum_{c\in C_j}c\cW_{j, c}')\\
 &=\sum_{1\le j\le n}\sum_{c\in C_j}\varphi'(c\cW_{j, c}')\\
 &=\sum_{1\le j\le n}\sum_{c\in C_j}\varphi'(\cW_{j, c}')\\
 &\ge \sum_{1\le j\le n}\sum_{c\in C_j}(\varphi'(\cW_{j})-\varepsilon\dim(\cW_{j}))\\
  &\ge \sum_{1\le j\le n}\sum_{c\in C_j}(\varphi(\cW_{j})-\varepsilon \dim(\cW_j)-\varepsilon\dim(\cW_{j}))\\
  &\ge \sum_{1\le j\le n}\sum_{c\in C_j}(f(\varphi)\dim(\cW_{j})-2\varepsilon\dim(\cW_j))\\
  &=(f(\varphi)-2\varepsilon)\sum_{1\le j\le n}\sum_{c\in C_j}\dim(c\cW_{j})\\
  &\ge (f(\varphi)-2\varepsilon)\dim(\sum_{1\le j\le n}\sum_{c\in C_j}c\cW_j)\\
  &\ge (f(\varphi)-2\varepsilon)(1-\theta)\dim(\cW)\\
  &\ge (f(\varphi)-3\varepsilon)\dim(\cW),
 \end{align*}
 where in the first equality we apply the condition (iv) and in the 5th inequality we apply Lemma~\ref{L-additivity for standard}.(i).
Therefore
$$f(\varphi')\ge \frac{\varphi'(\cW)}{\dim(\cW)}-\varepsilon\ge f(\varphi)-4\varepsilon.$$
It follows that $f$ is continuous on $\Phi_{C, \cV}$.
\end{proof}

Denote by $\Pb_{\cU}(R)$ the set of $\rk\in \Pb(R)$ which is $\sigma_g$-invariant for all $g\in \cU$. This is a compact convex subset of $\Pb(R)$.
A map $f: X\rightarrow Y$ between convex sets is called {\it affine} if $f(tx_1+(1-t)x_2)=tf(x_1)+(1-t)f(x_2)$ for all $0\le t\le 1$ and $x_1, x_2\in X$.
The following is a restatement of Theorem~\ref{T-main continuity}.

\begin{theorem} \label{T-continuity}
Assume that $(\cF, \cU)$ has the weak quasitiling property. The map $\Pb_{\cU}(R)\rightarrow \Pb(S)$ sending $\rk$ to $\rk_{\cF}$ is affine and continuous.
\end{theorem}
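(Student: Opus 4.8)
The plan is to argue pointwise on matrices: since the topologies on $\Pb(R)$ and $\Pb(S)$ are those of pointwise convergence, it suffices to show that for each fixed $A\in M_{n,m}(S)$ the map $\rk\mapsto\rk_{\cF}(A)$ on $\Pb_{\cU}(R)$ is affine and continuous. Fix such an $A$. Using Definition~\ref{D-amenable}.(iv), I would choose once and for all some $\cV\in\hat{\cF}$ containing $1_S$ and all the entries $A_{k,j}$; note $\cV$ depends only on $A$, not on $\rk$. For $\rk\in\Pb_{\cU}(R)$ set $\varphi_{\rk,A}\colon\cF\to\Rb$, $\varphi_{\rk,A}(\cW)=\rk_{\cW}(A)$ for $\cW\in\hat{\cF}$ and $\varphi_{\rk,A}(\{0\})=0$; by Lemma~\ref{L-matrix vs module} this is $\cW\mapsto\dim(\cW^nA\mid S^m)$, which is the function $\varphi_{\cM_1}$ of Lemma~\ref{L-amenable extension relative} with $\cM_1$ the $R$-span of the rows of $A$ and $\cN_2=S^m$. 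The heart of the proof is to verify $\varphi_{\rk,A}\in\Phi_{n,\cV}$ in the sense of Lemma~\ref{L-continuous limit}, uniformly in $\rk$.

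Conditions (i)--(iii) of Lemma~\ref{L-continuous limit} are precisely what is checked inside the proof of Lemma~\ref{L-amenable extension relative}; condition (vi) holds with $C=n$ since $R^nA$ is generated by $n$ elements, so $\dim(R^nA\mid S^m)\le n$; condition (vii) is monotonicity of $\dim(\cdot\mid S^m)$ in the first variable. For (v) I would use $(g\cW)^nA=g(\cW^nA)$: picking $\tilde{\cW}\in\hat{\cF}$ with $\cW A_{k,j}\subseteq\tilde{\cW}$, one has $g\tilde{\cW}\in\cF$ and $(g\cW)A_{k,j}\subseteq g\tilde{\cW}$, so by Lemma~\ref{L-matrix vs module} applied to both $\tilde{\cW}$ and $g\tilde{\cW}$, together with the $R$-module isomorphism $(\tilde{\cW}^m)^{\sigma_g}\cong(g\tilde{\cW})^m$, $\bar{x}\mapsto gx$, and Lemma~\ref{L-invariant rk}.(i), we get $\varphi_{\rk,A}(g\cW)=\dim(g(\cW^nA)\mid(g\tilde{\cW})^m)=\dim(\cW^nA\mid\tilde{\cW}^m)=\varphi_{\rk,A}(\cW)$; in particular (ii) holds too. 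The delicate condition is (iv). If $\cW_1,\cW_2\in\cF$ are $\cV$-separated then $1_S\in\cV$ forces $\cW_1\cap\cW_2=\{0\}$, hence $(\cW_1+\cW_2)^nA=\cW_1^nA+\cW_2^nA$; and since the entries of $A$ lie in $\cV$ we have $\cW_i^nA\subseteq(\cW_i\cV)^m$, while $(\cW_1\cV)^m\cap(\cW_2\cV)^m=(\cW_1\cV\cap\cW_2\cV)^m=\{0\}$, so $(\cW_1\cV)^m\oplus(\cW_2\cV)^m=((\cW_1+\cW_2)\cV)^m$ and correspondingly $\cW_1^nA\oplus\cW_2^nA=(\cW_1+\cW_2)^nA$. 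Then Definition~\ref{D-bivariant}.(iii) applied inside this direct sum, followed by three applications of Lemma~\ref{L-matrix vs module} (with $\cW_1\cV$, $\cW_2\cV$, and $(\cW_1+\cW_2)\cV$ as the ``$\tilde{\cW}$''), gives $\varphi_{\rk,A}(\cW_1+\cW_2)=\varphi_{\rk,A}(\cW_1)+\varphi_{\rk,A}(\cW_2)$. Thus $\varphi_{\rk,A}\in\Phi_{n,\cV}$, with $n$ and $\cV$ independent of $\rk$.

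To finish I would observe that for each fixed $\cW\in\hat{\cF}$ the value $\rk_{\cW}(A)$ equals $\rk(B)$ for a matrix $B$ over $R$ read off from \eqref{E-matrix} that depends only on $\cW$, $A$ and the chosen bases, not on $\rk$. Hence $\rk\mapsto\varphi_{\rk,A}(\cW)=\rk(B)$ is affine and continuous (it is evaluation at the fixed matrix $B$), and trivially so at $\{0\}$; therefore $\rk\mapsto\varphi_{\rk,A}$ is an affine continuous map $\Pb_{\cU}(R)\to\Phi_{n,\cV}$ for the pointwise topology. Composing with the map $\varphi\mapsto\lim_{\cW}\frac{\varphi(\cW)}{\dim(\cW)}$, which is affine (the limit exists and equals the infimum by Lemma~\ref{L-infimum rule}, and $\Phi_{n,\cV}$ is convex) and, under the weak quasitiling hypothesis, continuous by Lemma~\ref{L-continuous limit}, and using Theorem~\ref{T-infimum for matrix} to identify this limit with $\rk_{\cF}(A)$, yields that $\rk\mapsto\rk_{\cF}(A)$ is affine and continuous; as this holds for every $A$, the theorem follows. (Affineness alone needs only Theorem~\ref{T-infimum for matrix}, not weak quasitiling.) The main obstacle is the bookkeeping behind conditions (iv) and (v)---making $\varphi_{\rk,A}$ additive on $\cV$-separated pairs and invariant under left multiplication by $\cU$, via Lemmas~\ref{L-matrix vs module} and \ref{L-invariant rk}---and ensuring the auxiliary data $(n,\cV)$ defining $\Phi_{n,\cV}$ can be chosen once, uniformly in $\rk$.
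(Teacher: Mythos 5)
Your proposal is correct and follows essentially the same route as the paper: reduce to pointwise continuity in $A$, verify that $\cW\mapsto\rk_\cW(A)$ lies in $\Phi_{n,\cV}$ for a $\cV$ and $C=n$ chosen independently of $\rk$, and invoke Lemma~\ref{L-continuous limit}. The only (immaterial) difference is that you check conditions (iv) and (v) via module-level direct sums and Lemma~\ref{L-invariant rk}.(i), where the paper writes out the block-diagonal matrix $\left[\begin{smallmatrix} B_1 & \\ & B_2\end{smallmatrix}\right]$ and the identity $gB_1=\sigma_g(B_1)g$ explicitly.
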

\begin{proof} Clearly this map is affine. We just need to show that it is continuous.

Let $A\in M_{n, m}(S)$. Take $\cV\in \hat{\cF}$ with $A_{i, j}\in \cV$ for all $1\le i\le n$ and $1\le j\le m$. Put $C=n$. We have $\Phi_{C, \cV}$ in Lemma~\ref{L-continuous limit}. Now it suffices to show that for each $\rk\in \Pb_{\cU}(R)$, the function $\varphi: \cF\rightarrow \Rb_{\ge 0}$ sending $\cW\in \hat{\cF}$ to $\rk_\cW(A)$ and $\{0\}$ to $0$ lies in $\Phi_{C, \cV}$.

Denote by $\cM_1$ the left $R$-submodule of $S^m$ generated by the rows of $A$. Put $\cN_2=S^m$. It was shown in the proof of Lemma~\ref{L-amenable extension relative} that the function $\cW\mapsto \dim(\cW \cM_1|\cN_2)$ on $\cF$ satisfies the conditions (i)-(iii) in Lemma~\ref{L-infimum rule}. Note that $\cW^n A=\cW \cM_1$ for every $\cW\in \cF$. By Lemma~\ref{L-matrix vs module} we have $\varphi(\cW)=\rk_\cW(A)=\dim(\cW \cM_1|\cN_2)$ for every $\cW\in \hat{\cF}$. Thus $\varphi(\cW)=\dim(\cW \cM_1|\cN_2)$ for every $\cW\in \cF$. Therefore $\varphi$ satisfies the conditions (i)-(iii) in Lemma~\ref{L-infimum rule}.

Let $\cW_1, \cW_2\in \cF$ be $\cV$-separated. If $\cW_1$ or $\cW_2$ is equal to $\{0\}$, say $\cW_1=\{0\}$, then
$$ \varphi(\cW_1+\cW_2)=\varphi(\cW_2)=\varphi(\cW_1)+\varphi(\cW_2).$$
Thus we may assume that $\cW_1, \cW_2\in \hat{\cF}$.
Since $\cV\neq \{0\}$, we have $\cW_1\cap \cW_2=\{0\}$. Take an $R$-basis $w_1, \dots, w_l$ for $\cW_1^n$, an $R$-basis $w_{l+1}, \dots, w_{l+r}$ for $\cW_2^n$, an $R$-basis $\tilde{w}_1, \dots, \tilde{w}_p$ for $(\cW_1\cV)^m$, and an $R$-basis $\tilde{w}_{p+1}, \dots, \tilde{w}_{p+q}$ for $(\cW_2\cV)^m$. Then $w_1, \dots, w_l, w_{l+1}, \dots, w_{l+r}$ is an $R$-basis for $(\cW_1+\cW_2)^n$, and
$\tilde{w}_1, \dots, \tilde{w}_p, \tilde{w}_{p+1}, \dots, \tilde{w}_{p+q}$ is an $R$-basis for $(\cW_1\cV+ \cW_2\cV)^m=((\cW_1+\cW_2)\cV)^m$.
We have
\begin{align*}
\left[\begin{matrix} w_1A \\ \vdots \\ w_lA \end{matrix}\right]=B_1\left[\begin{matrix} \tilde{w}_1\\ \vdots\\ \tilde{w}_p \end{matrix}\right] \mbox{ and } \left[\begin{matrix} w_{l+1}A \\ \vdots \\ w_{l+r}A \end{matrix}\right]=B_2\left[\begin{matrix} \tilde{w}_{p+1}\\ \vdots\\ \tilde{w}_{p+q} \end{matrix}\right]
\end{align*}
for some $B_1\in M_{l, p}(R)$ and $B_2\in M_{r, q}(R)$. Then
\begin{align*}
\left[\begin{matrix} w_1A \\ \vdots \\ w_{l+r}A \end{matrix}\right]=\left[\begin{matrix} B_1 & \\ & B_2 \end{matrix}\right]\left[\begin{matrix} \tilde{w}_1\\ \vdots\\ \tilde{w}_{p+q} \end{matrix}\right].
\end{align*}
Thus
$$\varphi(\cW_1+\cW_2)=\rk(\left[\begin{matrix} B_1 & \\ & B_2 \end{matrix}\right])=\rk(B_1)+\rk(B_2)=\varphi(\cW_1)+\varphi(\cW_2).$$
This verifies the condition (iv) of Lemma~\ref{L-continuous limit}.

Let $g\in \cU$. If $\cW_1=\{0\}$, then $\varphi(g\cW_1)=\varphi(\{0\})=\varphi(\cW_1)$. Thus we may assume $\cW_1\in \hat{\cF}$. Then $gw_1, \dots, gw_l$ is an $R$-basis of $(g\cW_1)^n$, and $g\tilde{w}_1, \dots, g\tilde{w}_p$ is an $R$-basis for $(g\cW_1\cV)^m$. Note that
\begin{align*}
\left[\begin{matrix} gw_1A \\ \vdots \\ gw_l A \end{matrix}\right]=gB_1\left[\begin{matrix} \tilde{w}_1\\ \vdots\\ \tilde{w}_p \end{matrix}\right]=\sigma_g(B_1)\left[\begin{matrix} g\tilde{w}_1\\ \vdots\\ g\tilde{w}_{p} \end{matrix}\right].
\end{align*}
Thus
$$\varphi(g\cW_1)=\rk(\sigma_g(B_1))=\rk(B_1)=\varphi(\cW_1).$$
This verifies the condition (v) of Lemma~\ref{L-continuous limit}.

For any $\cW\in \cF$ we have $\varphi(\cW)\le \dim(\cW^n)=n\dim(\cW)$, verifying the condition (vi) of Lemma~\ref{L-continuous limit}.

For any $\cW_1, \cW_2\in \cF$ with $\cW_1\subseteq \cW_2$, we have
$$\varphi(\cW_1)=\dim(\cW_1\cM_1|\cN_2)\le \dim(\cW_2\cM_1|\cN_2)=\varphi(\cW_2),$$
verifying the condition (vii) of Lemma~\ref{L-continuous limit}.
\end{proof}

\section{Bivariant Sylvester module rank function for amenable normal extensions} \label{S-bivariant extension}

In this section we describe the bivariant Sylvester module rank function corresponding to the Sylvester matrix rank function $\rk_{\cF}$. Throughout this section we assume that $R$ is a unital ring with a Sylvester matrix rank function $\rk$, and $S$ is a right amenable normal extension of $R$ with $\cF$ and $\cU$ preserving $\rk$. Then we have the Sylvester matrix rank function $\rk_{\cF}$ for $S$ extending $\rk$. In Lemma~\ref{L-amenable extension relative} we have defined $\dim_{\cF}(\cN_1|\cN_2)$ for left $S$-modules $\cN_1\subseteq \cN_2$ in the case $\cN_1$ is finitely generated.

Taking $A=(1_S)$ in Theorem~\ref{T-infimum for matrix} we get
$\dim_{\cF}(S|S)=1$.

\begin{lemma} \label{L-amenable extension continuity}
Let $\cN_1\subseteq \cN_2$ be left $S$-modules such that $\cN_1$ is finitely generated. Then
$$\dim_{\cF}(\cN_1|\cN_2)=\inf_{\cN_2^\sharp}\dim_{\cF}(\cN_1|\cN_2^\sharp)$$
for $\cN_2^\sharp$ ranging over finitely generated $S$-submodules of $\cN_2$ containing $\cN_1$.
\end{lemma}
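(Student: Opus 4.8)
The plan is to reduce the statement to the corresponding infimum property of the \emph{finitary} ranks $\rk_\cW$ and then pass to the limit along right-invariant $\cW$. Concretely, I would fix a finitely generated $R$-submodule $\cM_1$ of $\cN_1$ generating $\cN_1$ as an $S$-module, so that by Lemma~\ref{L-amenable extension relative} one has $\dim_{\cF}(\cN_1|\cN_2)=\lim_\cW \frac{\dim(\cW\cM_1|\cN_2)}{\dim(\cW)}=\inf_{\cV\in\hat\cF}\frac{\dim(\cV\cM_1|\cN_2)}{\dim(\cV)}$, and similarly $\dim_{\cF}(\cN_1|\cN_2^\sharp)=\inf_{\cV\in\hat\cF}\frac{\dim(\cV\cM_1|\cN_2^\sharp)}{\dim(\cV)}$ for each finitely generated $S$-submodule $\cN_2^\sharp$ of $\cN_2$ containing $\cN_1$. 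One inequality is immediate: for every $\cN_2^\sharp\supseteq\cN_1$ one has $\dim(\cW\cM_1|\cN_2)\le\dim(\cW\cM_1|\cN_2^\sharp)$ (by Definition~\ref{D-bivariant}(v), since $\cW\cM_1$ is finitely generated and $\cN_2^\sharp$ sits between it and $\cN_2$), hence $\dim_{\cF}(\cN_1|\cN_2)\le\dim_{\cF}(\cN_1|\cN_2^\sharp)$, so $\dim_{\cF}(\cN_1|\cN_2)\le\inf_{\cN_2^\sharp}\dim_{\cF}(\cN_1|\cN_2^\sharp)$.

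For the reverse inequality, fix $\varepsilon>0$ and choose (using the infimum formula above) some $\cV\in\hat\cF$ with $\frac{\dim(\cV\cM_1|\cN_2)}{\dim(\cV)}<\dim_{\cF}(\cN_1|\cN_2)+\varepsilon$. Now $\cV\cM_1$ is a finitely generated $R$-submodule of $\cN_2$, so by Definition~\ref{D-bivariant}(v) there is a finitely generated $R$-submodule $\cP$ of $\cN_2$ containing $\cV\cM_1$ with $\dim(\cV\cM_1|\cP)<\dim(\cV\cM_1|\cN_2)+\varepsilon\dim(\cV)$. Let $\cN_2^\sharp$ be the $S$-submodule of $\cN_2$ generated by $\cP$ together with $\cN_1$ (equivalently generated by $\cP+\cM_1$); it is finitely generated and contains $\cN_1$. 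Then
$$\dim_{\cF}(\cN_1|\cN_2^\sharp)\le\frac{\dim(\cV\cM_1|\cN_2^\sharp)}{\dim(\cV)}\le\frac{\dim(\cV\cM_1|\cP)}{\dim(\cV)}<\frac{\dim(\cV\cM_1|\cN_2)}{\dim(\cV)}+\varepsilon<\dim_{\cF}(\cN_1|\cN_2)+2\varepsilon,$$
where the second inequality uses Definition~\ref{D-bivariant}(v) again with $\cV\cM_1\subseteq\cP\subseteq\cN_2^\sharp$. Since $\varepsilon>0$ was arbitrary, $\inf_{\cN_2^\sharp}\dim_{\cF}(\cN_1|\cN_2^\sharp)\le\dim_{\cF}(\cN_1|\cN_2)$, which completes the proof.

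The one point requiring a little care is the very first reduction: I am using that $\dim_{\cF}$ is computed by the \emph{same} infimum-over-$\hat\cF$ formula regardless of which enveloping module appears, which is exactly the content of Lemma~\ref{L-amenable extension relative} (applied once with $\cN_2$ and once with each $\cN_2^\sharp$, noting $\cM_1$ still generates $\cN_1$ over $S$ and still lies in $\cN_2^\sharp$). The substantive structural input — passing from a finitely generated $R$-submodule $\cV\cM_1$ of $\cN_2$ down to a finitely generated $R$-module $\cP$ on which the relative dimension is nearly attained — is precisely property (v) of a bivariant Sylvester module rank function, so no quasitiling or amenability beyond Lemma~\ref{L-amenable extension relative} is needed here; the main (mild) obstacle is just bookkeeping the two $\varepsilon$'s and checking that $\cN_2^\sharp$ is finitely generated as an $S$-module, which follows since $\cP$ and $\cM_1$ are both finitely generated over $R$.
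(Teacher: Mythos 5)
Your proposal is correct and follows essentially the same route as the paper's proof: the easy inequality by monotonicity, then for the reverse inequality choosing $\cV\in\hat{\cF}$ nearly attaining the infimum, using condition (v) of Definition~\ref{D-bivariant} to find a finitely generated $R$-submodule of $\cN_2$ nearly computing $\dim(\cV\cM_1|\cN_2)$, and taking $\cN_2^\sharp$ to be the $S$-submodule it generates together with $\cN_1$ (the paper folds $\cM_1$ into the chosen $R$-submodule from the start, which is the same thing). The bookkeeping differences (your $\varepsilon\dim(\cV)$ versus the paper's $\varepsilon$) are cosmetic.
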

\begin{proof} Clearly $\dim_{\cF}(\cN_1|\cN_2)\le \dim_{\cF}(\cN_1|\cN_2^\sharp)$ for every $S$-submodule $\cN_2^\sharp$ of $\cN_2$ containing $\cN_1$.
Take a finitely generated $R$-submodule $\cM_1$ of $\cN_1$ generating $\cN_1$ as an $S$-module. Let $\varepsilon>0$. Take a  $\cV\in \hat{\cF}$ with
$$\frac{\dim(\cV\cM_1|\cN_2)}{\dim(\cV)}\le \dim_{\cF}(\cN_1|\cN_2)+\varepsilon.$$
Take a finitely generated $R$-submodule $\cM_2$ of $\cN_2$ containing $\cM_1+\cV\cM_1$ with
$$\dim(\cV\cM_1|\cM_2)\le \dim(\cV\cM_1|\cN_2)+\varepsilon.$$
Denote by $\cN_2^\sharp$ the $S$-submodule of $\cN_2$ generated by $\cM_2$. Then $\cN_1\subseteq \cN_2^\sharp$, and
\begin{align*}
\dim_{\cF}(\cN_1|\cN_2^\sharp)&\le \frac{\dim(\cV\cM_1|\cN_2^\sharp)}{\dim(\cV)}\\
&\le \frac{\dim(\cV\cM_1|\cM_2)}{\dim(\cV)}\\
&\le \frac{\dim(\cV\cM_1|\cN_2)}{\dim(\cV)}+\varepsilon\\
&\le \dim_{\cF}(\cN_1|\cN_2)+2\varepsilon.
\end{align*}
\end{proof}

For any left $S$-modules $\cN_1\subseteq \cN_2\subseteq \cN_3$ with $\cN_1, \cN_2$ finitely generated, clearly we have $\dim_{\cF}(\cN_1|\cN_3)\le \dim_{\cF}(\cN_2|\cN_3)$.
For any left $S$-modules $\cN_1\subseteq \cN_2$, we define
$$\dim_{\cF}(\cN_1|\cN_2):=\sup_{\cN_1^\sharp}\dim_{\cF}(\cN_1^\sharp|\cN_2)$$
for $\cN_1^\sharp$ ranging over finitely generated $S$-submodules of $\cN_1$. Then $\dim_{\cF}(\cdot|\cdot)$ clearly satisfies all the conditions in Definition~\ref{D-bivariant}  except the condition (vi).

\begin{lemma} \label{L-amenable extension additivity}
For any left $S$-modules $\cN_1\subseteq \cN_2\subseteq \cN_3$, we have
$$ \dim_{\cF}(\cN_2|\cN_3)\ge \dim_{\cF}(\cN_1|\cN_3)+\dim_{\cF}(\cN_2/\cN_1|\cN_3/\cN_1).$$
\end{lemma}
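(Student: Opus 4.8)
The plan is to reduce the claim to finitely generated submodules and then transfer the exact additivity of $\dim(\cdot|\cdot)$ over $R$ (Theorem~\ref{T-bivariant}.(i)) through each F{\o}lner approximant $\cW\in\hat{\cF}$.

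By the definition of $\dim_{\cF}(\cdot|\cdot)$ on arbitrary first arguments, the right-hand side of the asserted inequality equals
$$\sup_{\cP}\dim_{\cF}(\cP|\cN_3)+\sup_{\cR}\dim_{\cF}(\cR|\cN_3/\cN_1),$$
with $\cP$ ranging over finitely generated $S$-submodules of $\cN_1$ and $\cR$ over finitely generated $S$-submodules of $\cN_2/\cN_1$; since these two indices are independent, it suffices to fix such $\cP$ and $\cR$ and show $\dim_{\cF}(\cN_2|\cN_3)\ge\dim_{\cF}(\cP|\cN_3)+\dim_{\cF}(\cR|\cN_3/\cN_1)$. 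Writing $\cR=\pi_{\cN_1}(\cN_2')$, where $\pi_{\cN_1}:\cN_3\rightarrow\cN_3/\cN_1$ is the quotient map and $\cN_2'\subseteq\cN_2$ is a finitely generated $S$-submodule (lift finitely many $S$-generators of $\cR$ into $\cN_2$), set $\cN=\cP+\cN_2'$. Then $\cN$ is a finitely generated $S$-submodule of $\cN_2$, so $\dim_{\cF}(\cN_2|\cN_3)\ge\dim_{\cF}(\cN|\cN_3)$, and moreover $\cP\subseteq\cN\cap\cN_1$ and $\pi_{\cN_1}(\cN)=\cR$. Thus it remains to prove: for every finitely generated $S$-submodule $\cN\subseteq\cN_3$ and every finitely generated $S$-submodule $\cP\subseteq\cN\cap\cN_1$,
$$\dim_{\cF}(\cN|\cN_3)\ge\dim_{\cF}(\cP|\cN_3)+\dim_{\cF}(\pi_{\cN_1}(\cN)|\cN_3/\cN_1).$$

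To prove this, I would choose a finitely generated $R$-submodule $\cM\subseteq\cN$ that generates $\cN$ over $S$ and contains a finitely generated $R$-submodule $\cP'\subseteq\cP$ generating $\cP$ over $S$ (take $\cM$ to be the $R$-span of a finite set of $S$-generators of $\cN$ together with one of $\cP$). Fix $\cW\in\hat{\cF}$. Because $\cW$ has an $R$-basis contained in $\cU\subseteq N_S(R)$, the $R$-modules $\cW\cM$, $\cW\cP'$, and $\cW\,\pi_{\cN_1}(\cM)=\pi_{\cN_1}(\cW\cM)$ are finitely generated; also $\cW\cP'\subseteq\cW\cM$ (as $\cP'\subseteq\cM$) and $\cW\cP'\subseteq\cN_1$ (as $\cP'\subseteq\cN_1$ and $\cN_1$ is an $S$-submodule). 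By Theorem~\ref{T-bivariant}.(i),
$$\dim(\cW\cM|\cN_3)=\dim(\cW\cP'|\cN_3)+\dim\big(\cW\cM/\cW\cP'\,\big|\,\cN_3/\cW\cP'\big),$$
and the natural surjection $\cN_3/\cW\cP'\rightarrow\cN_3/\cN_1$ sends $\cW\cM/\cW\cP'$ onto $\pi_{\cN_1}(\cW\cM)=\cW\,\pi_{\cN_1}(\cM)$ and $\cN_3/\cW\cP'$ onto $\cN_3/\cN_1$, so Theorem~\ref{T-bivariant}.(iii) gives $\dim(\cW\,\pi_{\cN_1}(\cM)\,|\,\cN_3/\cN_1)\le\dim(\cW\cM/\cW\cP'\,|\,\cN_3/\cW\cP')$. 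Combining the two yields, for every $\cW\in\hat{\cF}$,
$$\dim(\cW\cM|\cN_3)\ge\dim(\cW\cP'|\cN_3)+\dim(\cW\,\pi_{\cN_1}(\cM)\,|\,\cN_3/\cN_1).$$

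Finally I would divide by $\dim(\cW)$ and pass to the limit as $\cW$ becomes more and more right invariant: by Lemma~\ref{L-amenable extension relative} the three ratios converge to $\dim_{\cF}(\cN|\cN_3)$, $\dim_{\cF}(\cP|\cN_3)$, and $\dim_{\cF}(\pi_{\cN_1}(\cN)|\cN_3/\cN_1)$ respectively (for the last, $\pi_{\cN_1}(\cM)$ is a finitely generated $R$-submodule of $\pi_{\cN_1}(\cN)$ generating it over $S$ inside the ambient module $\cN_3/\cN_1$), and a pointwise inequality between convergent nets of reals passes to the limit. I expect the main obstacle to be purely organizational — the reduction to finitely generated submodules, the lifting of $\cR$ back to $\cN_2$, the identity $\pi_{\cN_1}(\cN)=\cR$, and the finite generation of $\cW\cM$ and its companions over $R$ (which is where normality of $\cF$, i.e. the basis in $\cU\subseteq N_S(R)$, is used) — since once these are in place the F{\o}lner-level estimate is an immediate application of Theorem~\ref{T-bivariant}, with neither the infimum rule nor the quasitiling property playing any role.
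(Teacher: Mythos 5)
Your proposal is correct and follows essentially the same route as the paper's proof: reduce to finitely generated pieces, apply Theorem~\ref{T-bivariant}.(i) to the inclusion $\cW\cP'\subseteq\cW\cM$ (the paper uses $\cW\cM_1\subseteq\cW(\cM_1+\cM_2)$), push the quotient term down to $\cN_3/\cN_1$ via Theorem~\ref{T-bivariant}.(iii), and pass to the limit using Lemma~\ref{L-amenable extension relative}. The only difference is that you spell out the reduction to finitely generated submodules in more detail than the paper does.
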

\begin{proof} Denote by $\pi$ the quotient map $\cN_3\rightarrow \cN_3/\cN_1$. Let $\cM_j$ be a finitely generated $R$-submodule of $\cN_j$ for $j=1, 2$. It suffices to show
\begin{align} \label{E-amenable extension additivity}
\lim_\cW\frac{\dim(\cW(\cM_1+\cM_2)|\cN_3)}{\dim(\cW)}\ge \lim_\cW\frac{\dim(\cW\cM_1|\cN_3)}{\dim(\cW)}+\lim_\cW\frac{\dim(\cW\pi(\cM_2)|\cN_3/\cN_1)}{\dim(\cW)}.
\end{align}
For each $R$-submodule $\cM$ of $\cN_3$ denote by $\pi_\cM$ the quotient map $\cN_3\rightarrow \cN_3/\cM$. For each $\cW\in \cF$ we have
\begin{align*}
\dim(\cW(\cM_1+\cM_2)|\cN_3)&= \dim(\cW\cM_1|\cN_3)+\dim(\pi_{\cW\cM_1}(\cW\cM_2)|\cN_3/\cW\cM_1)\\
&\ge \dim(\cW\cM_1|\cN_3)+\dim(\pi(\cW\cM_2)|\cN_3/\cN_1),
\end{align*}
where the equality comes from Theorem~\ref{T-bivariant}.(i) and the inequality comes from Theorem~\ref{T-bivariant}.(iii) and the fact that $\pi$ factors through $\pi_{\cW\cM_1}$.
Now \eqref{E-amenable extension additivity} follows immediately.
\end{proof}

\begin{theorem} \label{T-amenable extension}
Assume that $(\cF, \cU)$ has the weak quasitiling property.
Then $\dim_{\cF}(\cdot|\cdot)$ is the bivariant Sylvester module rank function for $S$ corresponding to $\rk_{\cF}$.
\end{theorem}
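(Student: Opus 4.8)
The plan is to verify that $\dim_{\cF}(\cdot\,|\,\cdot)$ is a bivariant Sylvester module rank function for $S$. Granting this, its associated Sylvester matrix rank function is $A\mapsto\dim_{\cF}(S^nA\,|\,S^m)=\rk_{\cF}(A)$ by Theorem~\ref{T-infimum for matrix}, so the one-to-one correspondence of \cite[Theorems~2.4 and 3.3]{Li19} identifies $\dim_{\cF}$ with the bivariant Sylvester module rank function corresponding to $\rk_{\cF}$, which is the assertion. It was already recorded, just before Lemma~\ref{L-amenable extension additivity}, that $\dim_{\cF}$ satisfies conditions (i)--(v) of Definition~\ref{D-bivariant} and $\dim_{\cF}(S\,|\,S)=1$; and Lemma~\ref{L-amenable extension additivity} is exactly the inequality ``$\ge$'' in condition (vi). So it remains only to prove ``$\le$'' there, i.e.\ $\dim_{\cF}(\cM_2)\le\dim_{\cF}(\cM_1\,|\,\cM_2)+\dim_{\cF}(\cM_2/\cM_1)$ for all left $S$-modules $\cM_1\subseteq\cM_2$. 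The first (routine) reduction uses conditions (iv) and (v) of $\dim_{\cF}$ (the latter being Lemma~\ref{L-amenable extension continuity}) together with Lemma~\ref{L-amenable extension additivity} to reduce this to the single inequality
$$\dim_{\cF}(S^m/S^nB)\ \ge\ m-\rk_{\cF}(B)\qquad\text{for all }B\in M_{n,m}(S),$$
since $\dim_{\cF}(S^m)=m$ (condition (iii) and $\dim_{\cF}(S\,|\,S)=1$) and $\dim_{\cF}(S^nB\,|\,S^m)=\rk_{\cF}(B)$ (Theorem~\ref{T-infimum for matrix}), while the reverse inequality is contained in Lemma~\ref{L-amenable extension additivity}.

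To prove the displayed inequality, write $\cN=S^m/S^nB$ with quotient map $\pi\colon S^m\to\cN$. Taking $\pi(R^m)$ as generating $R$-submodule in Lemma~\ref{L-amenable extension relative} gives $\dim_{\cF}(\cN)=\inf_{\cW\in\hat{\cF}}\dim(\pi(\cW^m)\,|\,\cN)/\dim(\cW)$, so it suffices to show that for every $\varepsilon>0$ there are $\cV\in\hat{\cF}$ and $\delta>0$ with $\dim(\pi(\cW^m)\,|\,\cN)\ge(m-\rk_{\cF}(B)-\varepsilon)\dim(\cW)$ for every $(\cV,\delta)$-invariant $\cW\in\hat{\cF}$. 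This is where the weak quasitiling property enters, through an Ornstein--Weiss type argument modelled on the proof of Lemma~\ref{L-continuous limit}. Fix $\cV\in\hat{\cF}$ containing $1_S$ and all entries of $B$. Since $\rk_{\cW_0}(B)/\dim(\cW_0)\to\rk_{\cF}(B)$ as $\cW_0$ becomes more right invariant (Theorem~\ref{T-infimum for matrix}), Definition~\ref{D-weak quasitiling} supplies $(\cV,\varepsilon)$-invariant, and in fact arbitrarily invariant, shapes $\cW_1,\dots,\cW_n\in\hat{\cF}$ such that any sufficiently right-invariant $\cW$ is $\varepsilon$-quasitiled: there are $C_j\subseteq\cU\cap\cW$ and $\cW_{j,c}\subseteq\cW_j$ in $\cF$ with $\dim(\cW_{j,c})\ge(1-\varepsilon)\dim(\cW_j)$, the left $R$-modules $c\cW_{j,c}$ linearly independent, and $\sum_{j,c}c\cW_j\subseteq\cW$ spanning at least $(1-\varepsilon)\dim(\cW)$. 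Put $X=(\sum_{j,c}c\cW_{j,c})^m=\bigoplus_{j,c}(c\cW_{j,c})^m$, a direct summand of $\cW^m$ with $\dim(X)=m\sum_{j,c}\dim(\cW_{j,c})\ge(1-\varepsilon)^2m\dim(\cW)$; as $X\subseteq\cW^m$ we have $\dim(\pi(\cW^m)\,|\,\cN)\ge\dim(\pi(X)\,|\,\cN)$.

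The heart of the matter — and the step I expect to be the main obstacle — is the estimate $\dim(\pi(X)\,|\,\cN)\ge\dim(X)-(\rk_{\cF}(B)+O(\varepsilon))\dim(\cW)$; equivalently, that passing from $X$ to its image in $\cN$ (i.e.\ factoring out $S^nB$) cuts the rank by at most $\rk_{\cF}(B)\dim(\cW)$ up to an error $O(\varepsilon)\dim(\cW)$. The idea is that, since the entries of $B$ lie in $\cV$ and each $\cW_{j,c}\subseteq\cW_j$ is $\cV$-invariant, the relations in $S^nB$ that meet a single tile $c\cW_{j,c}$ stay essentially inside a slight enlargement of that tile, so that, up to a boundary error governed by Lemma~\ref{L-additivity for standard} and the quasitiling estimates, the interaction of $S^nB$ with $X$ splits as a direct sum over the individual tiles; on the tile indexed by $(j,c)$ the contribution is at most $\rk_{\cW_j}(B)\le(\rk_{\cF}(B)+\varepsilon)\dim(\cW_j)$, the transport between $c\cW_{j,c}$ and $\cW_{j,c}$ and the passage from $\cW_{j,c}$ up to $\cW_j$ being carried out with the normalising elements $c\in\cU$ (Lemma~\ref{L-invariant rk}.(ii)) and the submodularity and quotient estimates of Theorem~\ref{T-bivariant}. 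Summing over the tiles, using $\sum_{j,c}\dim(\cW_j)\le(1+2\varepsilon)\dim(\cW)$ and that the tiles fill a $(1-\varepsilon)$-fraction of $\cW$, gives $\dim(\pi(X)\,|\,\cN)\ge(m-\rk_{\cF}(B))\dim(\cW)-O(\varepsilon)\dim(\cW)$; dividing by $\dim(\cW)$ and letting $\varepsilon\to0$ yields $\dim_{\cF}(\cN)\ge m-\rk_{\cF}(B)$, as required. The delicate feature throughout is that, in contrast to the ``$\ge$'' direction of condition (vi) which is pure sub-modularity, this ``$\le$'' direction is a genuine super-additivity statement and would fail without the localisation of relations provided by the quasitiling — paralleling the failure of additivity of mean rank for general amenable algebras lacking such a tiling property.
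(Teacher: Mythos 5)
Your overall architecture is right in outline (conditions (i)--(v) and the ``$\ge$'' half of (vi) are indeed already in place, and the remaining work is a superadditivity statement requiring the quasitiling), but there are two genuine gaps. First, the ``routine reduction'' of condition (vi) for arbitrary $\cM_1\subseteq\cM_2$ to the single inequality $\dim_{\cF}(S^m/S^nB)\ge m-\rk_{\cF}(B)$ is not routine and is not available from (iv), (v) and Lemma~\ref{L-amenable extension additivity} alone. If you pass to a finitely generated $\cP\subseteq\cM_2$ via (iv), the quotient you are forced to consider is $\cM_2/(\cP\cap\cM_1)$, not $\cM_2/\cM_1$, and comparing the two requires monotonicity of $\dim_{\cF}$ under quotient maps (Theorem~\ref{T-bivariant}.(iii)) and the continuity in the denominator (Theorem~\ref{T-bivariant}.(iv)) --- properties you may only use \emph{after} knowing $\dim_{\cF}$ is a bivariant Sylvester module rank function, which is what you are trying to prove. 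This is precisely why the paper proves the \emph{relative} additivity $\dim_{\cF}(\cN_2|\cN_3)=\dim_{\cF}(\cN_1|\cN_3)+\dim_{\cF}(\cN_2/\cN_1|\cN_3/\cN_1)$ for a triple $\cN_1\subseteq\cN_2\subseteq\cN_3$: in that form the reduction to a finitely generated $\cN_2^\sharp$ and to a finitely generated $R$-submodule $\cM_1\subseteq\cN_1$ (via Theorem~\ref{T-bivariant}.(iv), applied to the bivariant function of $R$, which \emph{is} known) goes through cleanly, and condition (vi) is the special case $\cN_3=\cN_2$.

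Second, the ``heart of the matter'' as you set it up points in an unworkable direction. You want a \emph{lower} bound on $\dim(\pi(X)|\cN)$ with $\cN=S^m/S^nB$, which by Theorem~\ref{T-bivariant}.(iv) amounts to bounding, uniformly over all finitely generated $R$-submodules $\cK\subseteq S^nB$, the loss $\dim(\cK|S^m)-\dim(\pi_X(\cK)|S^m/X)$ by $(\rk_{\cF}(B)+O(\varepsilon))\dim(\cW)$. Such $\cK$ can be of the form $\cV_0^nB$ with $\dim(\cV_0)$ vastly larger than $\dim(\cW)$, so the relations are in no sense localized to the tiles, and submodularity only gives this loss as an \emph{upper} bound for $\dim(X\cap\cK|S^m)$, not the other way around; nothing in your sketch controls it. The argument that works runs in the complementary direction: fix a finitely generated $\cM_1$ approximating the submodule being factored out \emph{before} choosing the tiled $\cW$, and bound $\dim(\pi_{\cW\cM_1}(\cW\cM_2)|\cN_3/\cW\cM_1)$ from \emph{above} by a sum over tiles using submodularity (Theorem~\ref{T-bivariant}.(ii)), the comparison $\pi_{\cW\cM_1}\preceq\pi_{c\cM_1}$ (Theorem~\ref{T-bivariant}.(iii)), and the translation by $c\in\cU$ via Lemma~\ref{L-invariant rk}.(ii); upper bounds by sums over tiles are exactly what subadditivity delivers, whereas your proposed tile-by-tile lower bound would need an independence of the tiles modulo $S^nB$ that is neither stated nor provable from the weak quasitiling property. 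Until both of these points are repaired, the proof is incomplete.
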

\begin{proof} We shall show that for any left $S$-modules $\cN_1\subseteq \cN_2\subseteq \cN_3$, one has
$$ \dim_{\cF}(\cN_2|\cN_3)=\dim_{\cF}(\cN_1|\cN_3)+\dim_{\cF}(\cN_2/\cN_1|\cN_3/\cN_1).$$
By Lemma~\ref{L-amenable extension additivity} it suffices to show
$$ \dim_{\cF}(\cN_2|\cN_3)\le \dim_{\cF}(\cN_1|\cN_3)+\dim_{\cF}(\cN_2/\cN_1|\cN_3/\cN_1).$$
Denote by $\pi$ the quotient map $\cN_3\rightarrow \cN_3/\cN_1$. Then it suffices to show
\begin{align} \label{E-amenable extension3}
 \dim_{\cF}(\cN_2^\sharp|\cN_3)\le \dim_{\cF}(\cN_1|\cN_3)+\dim_{\cF}(\pi(\cN_2^\sharp)|\cN_3/\cN_1)
\end{align}
for every finitely generated $S$-submodule $\cN_2^\sharp$ of $\cN_2$.

For each $R$-submodule $\cM$ of $\cN_3$ denote by $\pi_\cM$ the quotient map $\cN_3\rightarrow \cN_3/\cM$.
Let $\cM_2$ be a finitely generated $R$-submodule of $\cN_2^\sharp$ generating $\cN_2^\sharp$ as an $S$-module.

Let $\varepsilon>0$. Take $0<\theta<1$ such that
$$\frac{1}{1-\theta}(\dim_{\cF}(\pi(\cN_2^\sharp)|\cN_3/\cN_1)+2\varepsilon)+\theta\dim(\cM_2|\cN_3)<\dim_{\cF}(\pi(\cN_2^\sharp)|\cN_3/\cN_1)+3\varepsilon.$$
Note that
$$ \dim_{\cF}(\pi(\cN_2^\sharp)|\cN_3/\cN_1)=\lim_{\cW}\frac{\dim(\cW\pi(\cM_2)|\cN_3/\cN_1)}{\dim(\cW)}.$$
By assumption we can find some $n\in \Nb$ and
$\cW_1, \dots, \cW_n\in \hat{\cF}$  with
\begin{align} \label{E-amenable extension2}
\frac{\dim(\cW_j\pi(\cM_2)|\cN_3/\cN_1)}{\dim(\cW_j)}\le \dim_{\cF}(\pi(\cN_2^\sharp)|\cN_3/\cN_1)+\varepsilon
\end{align}
for all $1\le j\le n$
such that for any $\cV\in \hat{\cF}$ and $\delta>0$ there is some $(\cV, \delta)$-invariant $\cW\in \hat{\cF}$ which can be $\theta$-quasitiled by $\cW_1, \cdots, \cW_n$.

By parts (iv) and (iii) of Theorem~\ref{T-bivariant} we can find a finitely generated $R$-submodule $\cM_1$ of $\cN_1$ with
\begin{align} \label{E-amenable extension}
\dim(\pi_{\cM_1}(\cW_j\cM_2)|\cN_3/\cM_1)\le \dim(\pi(\cW_j\cM_2)|\cN_3/\cN_1)+\varepsilon
\end{align}
for all $1\le j\le n$.
Denote by $\cN_1^\sharp$ the $S$-submodule of $\cN_1$ generated by $\cM_1$.

By Theorem~\ref{T-bivariant}.(i) we have
\begin{align*}
\lim_{\cW}\frac{\dim(\pi_{\cW\cM_1}(\cW\cM_2)|\cN_3/\cW\cM_1)}{\dim(\cW)}
&=\lim_\cW\frac{\dim(\cW(\cM_1+\cM_2)|\cN_3)}{\dim(\cW)}-\lim_\cW \frac{\dim(\cW\cM_1|\cN_3)}{\dim(\cW)}\\
&=\dim_{\cF}(\cN_1^\sharp+\cN_2^\sharp|\cN_3)-\dim_{\cF}(\cN_1^\sharp|\cN_3).
\end{align*}
Then there are some $\cV\in \hat{\cF}$  and $\delta>0$ such that for every $(\cV, \delta)$-invariant $\cW\in \hat{\cF}$ one has
\begin{align} \label{E-amenable extension1}
 \dim_{\cF}(\cN_1^\sharp+\cN_2^\sharp|\cN_3)-\dim_{\cF}(\cN_1^\sharp|\cN_3)\le \frac{\dim(\pi_{\cW\cM_1}(\cW\cM_2)|\cN_3/\cW\cM_1)}{\dim(\cW)}+\varepsilon.
\end{align}
Take a $(\cV, \delta)$-invariant $\cW\in \hat{\cF}$ such that
$\cW$ can be $\theta$-quasitiled by $\cW_1, \dots, \cW_n$. Then \eqref{E-amenable extension1} holds. Let $C_1, \dots, C_n\subseteq \cU\cap \cW$ and $\cW_{j, c}$ for $1\le j\le n$ and $c\in C_j$ be as in Definition~\ref{D-weak quasitiling}.
Then
\begin{align} \label{E-amenable extension4}
\sum_{1\le j\le n}|C_j|\dim(\cW_j)&\le \frac{1}{1-\theta}\sum_{1\le j\le n}\sum_{c\in C_j}\dim(\cW_{j, c})\\
\nonumber &= \frac{1}{1-\theta}\sum_{1\le j\le n}\sum_{c\in C_j}\dim(c\cW_{j, c})\\
\nonumber &=\frac{1}{1-\theta}\dim(\sum_{1\le j\le n}\sum_{c\in C_j}c\cW_{j, c})\\
\nonumber &\le \frac{1}{1-\theta} \dim(\cW).
\end{align}
We have $\cW=\cW'\oplus \sum_{1\le j\le n}\sum_{c\in C_j}c\cW_j$ for some $\cW'\in \cF$. Take an $R$-basis $\Lambda$ of $\cW'$ in $\cU$. Then
$$|\Lambda|=\dim(\cW')=\dim(\cW)-\dim(\sum_{1\le j\le n}\sum_{c\in C_j}c\cW_j)\le \theta\dim(\cW).$$
Now we have
\begin{align*}
\dim(\pi_{\cW\cM_1}(\sum_{1\le j\le n}\sum_{c\in C_j}c\cW_j\cM_2)|\cN_3/\cW\cM_1)
&\le \sum_{1\le j\le n}\sum_{c\in C_j}\dim(\pi_{\cW\cM_1}(c\cW_j\cM_2)|\cN_3/\cW\cM_1)\\
&\le \sum_{1\le j\le n}\sum_{c\in C_j}\dim(\pi_{c\cM_1}(c\cW_j\cM_2)|\cN_3/c\cM_1)\\
&\le \sum_{1\le j\le n}\sum_{c\in C_j}\dim(\pi_{\cM_1}(\cW_j\cM_2)|\cN_3/\cM_1)\\
&\overset{\eqref{E-amenable extension}}\le \sum_{1\le j\le n}|C_j|(\dim(\pi(\cW_j\cM_2)|\cN_3/\cN_1)+\varepsilon)\\
&\overset{\eqref{E-amenable extension2}}\le \sum_{1\le j\le n}|C_j|(\dim_{\cF}(\pi(\cN_2^\sharp)|\cN_3/\cN_1)+2\varepsilon)\dim(\cW_j)\\
&\overset{\eqref{E-amenable extension4}}\le \frac{1}{1-\theta} \dim(\cW)(\dim_{\cF}(\pi(\cN_2^\sharp)|\cN_3/\cN_1)+2\varepsilon),
\end{align*}
where the first inequality comes from Theorem~\ref{T-bivariant}.(ii), the 2nd inequality comes from Theorem~\ref{T-bivariant}.(iii), and the 3rd inequality comes from Lemma~\ref{L-invariant rk}.(ii),
and hence
\begin{align*}
&\dim(\pi_{\cW\cM_1}(\cW\cM_2)|\cN_3/\cW\cM_1)\\
&\le \dim(\pi_{\cW\cM_1}(\sum_{1\le j\le n}\sum_{c\in C_j}c\cW_j\cM_2)|\cN_3/\cW\cM_1)+\sum_{g\in \Lambda}\dim(\pi_{\cW\cM_1}(g\cM_2)|\cN_3/\cW\cM_1)\\
&\le \frac{1}{1-\theta} \dim(\cW)(\dim_{\cF}(\pi(\cN_2^\sharp)|\cN_3/\cN_1)+2\varepsilon)+\sum_{g\in \Lambda}\dim(g\cM_2|\cN_3)\\
&\le \frac{1}{1-\theta} \dim(\cW)(\dim_{\cF}(\pi(\cN_2^\sharp)|\cN_3/\cN_1)+2\varepsilon)+\sum_{g\in \Lambda}\dim(\cM_2|\cN_3)\\
&\le \frac{1}{1-\theta} \dim(\cW)(\dim_{\cF}(\pi(\cN_2^\sharp)|\cN_3/\cN_1)+2\varepsilon)+\theta \dim(\cW)\dim(\cM_2|\cN_3)\\
&\le \dim(\cW)(\dim_{\cF}(\pi(\cN_2^\sharp)|\cN_3/\cN_1)+3\varepsilon),
\end{align*}
where the first inequality comes from Theorem~\ref{T-bivariant}.(ii), the 2nd inequality comes from Theorem~\ref{T-bivariant}.(iii), and the 3rd inequality comes from Lemma~\ref{L-invariant rk}.(ii).
Thus
\begin{align*}
\dim_{\cF}(\cN_1^\sharp+\cN_2^\sharp|\cN_3)-\dim_{\cF}(\cN_1^\sharp|\cN_3)&\overset{\eqref{E-amenable extension1}}\le
\frac{\dim(\pi_{\cW\cM_1}(\cW\cM_2)|\cN_3/\cW\cM_1)}{\dim(\cW)}+\varepsilon\\
&\le \dim_{\cF}(\pi(\cN_2^\sharp)|\cN_3/\cN_1)+4\varepsilon.
\end{align*}
Therefore
\begin{align*}
\dim_{\cF}(\cN_2^\sharp|\cN_3)&\le \dim_{\cF}(\cN_1^\sharp+\cN_2^\sharp|\cN_3)\\
&\le \dim_{\cF}(\cN_1^\sharp|\cN_3)+\dim_{\cF}(\pi(\cN_2^\sharp)|\cN_3/\cN_1)+4\varepsilon\\
&\le \dim_{\cF}(\cN_1|\cN_3)+\dim_{\cF}(\pi(\cN_2^\sharp)|\cN_3/\cN_1)+4\varepsilon.
\end{align*}
Letting $\varepsilon\to 0$ we obtain \eqref{E-amenable extension3} as desired.
Therefore $\dim_{\cF}(\cdot |\cdot)$ is a bivariant Sylvester module rank function for $S$. By Theorem~\ref{T-infimum for matrix} we have $\rk_{\cF}(A)=\dim_{\cF}(S^nA|S^m)$ for every $A\in M_{n, m}(S)$. Thus $\dim_{\cF}(\cdot|\cdot)$ corresponds to $\rk_{\cF}$.
\end{proof}

\begin{remark} \label{R-length function}
A {\it normalized length function} for $R$ \cite{NR} is an $\Rb_{\ge 0}\cup \{+\infty\}$-valued function $\cM\mapsto \rL(\cM)$ on the class of all left $R$-modules satisfying the following conditions:
\begin{enumerate}
\item $\rL(\{0\})=0$ and $\rL(R)=1$.
\item $\rL(\cM)=\sup_{\cM'}\rL(\cM')$ for $\cM'$ ranging over finitely generated $R$-submodules of $\cM$.
\item For any short exact sequence $0\rightarrow \cM_1\rightarrow \cM_2\rightarrow \cM_3\rightarrow 0$ of left $R$-modules, one has $\rL(\cM_2)=\rL(\cM_1)+\rL(\cM_3)$.
\end{enumerate}
The normalized length functions for $R$ are exactly the bivariant Sylvester module rank functions $\dim(\cdot|\cdot)$ for $R$ satisfying that $\dim(\cM_1|\cM_2)=\dim(\cM_1)$ for all left $R$-modules $\cM_1\subseteq \cM_2$, via $\rL(\cdot)=\dim(\cdot)$ \cite[Proposition 4.2]{Li19}.
When $\dim(\cdot|\cdot)$ is actually a normalized length function for $R$, clearly $\dim_{\cF}(\cdot|\cdot)$ in Theorem~\ref{T-amenable extension} is also a normalized length function for $S$. When $S$ is the twisted crossed product $R*\Gamma$ for an amenable group $\Gamma$ and $\dim(\cdot|\cdot)$ is a normalized length function for $R$, the normalized length function $\dim_{\cF}(\cdot|\cdot)$ has been constructed in the literature already, first by Elek for finitely generated left $S$-modules in the case $R$ is an integral domain with the fractional field $\Kb$ and $\dim(\cM)=\dim_\Kb(\Kb\otimes_R \cM)$ for left $R$-modules $\cM$ and $S$ is the group ring $R\Gamma$ \cite{Elek03c}, then by Li and Liang in the case $S$ is the group ring $R\Gamma$ \cite[Remark 3.16]{LL}, and finally by Virili for the general case of a twisted crossed product \cite[Theorem B]{Virili}.
\end{remark}

\section{Sylvester rank function for field extensions} \label{S-field extension}

In this section we study the field extension in Example~\ref{E-field extension1} in more detail. We show that $\rk_{\cF}$ behaves well with respect to composition of field extensions, and that it extends the construction of Jaikin-Zapirain in \cite{JZ19}.

\subsection{Composition} \label{SS-composition}

Let $\Kb$  be a field and let $\Eb$ be a field containing $\Kb$. Let $R$ be a $\Kb$-algebra with a Sylvester matrix rank function $\rk$ and the corresponding bivariant Sylvester module rank function $\dim(\cdot|\cdot)$. Then $\Eb\otimes_{\Kb}R$ is a right amenable normal extension of $R$ preserving $\rk$ with $\cF$ and $\cU$ as in Example~\ref{E-field extension1}. Thus we have the Sylvester matrix rank function $\rk_{\cF}$ for $\Eb\otimes_{\Kb} R$, and we shall denote it by $\rk_{\Eb/\Kb}$ in this subsection. We denote the corresponding bivariant Sylvester module rank function by $\dim_{\Eb/\Kb}(\cdot|\cdot)$.

Let $\Eb'$ be a field containing $\Eb$. Then we have the Sylvester matrix rank function $(\rk_{\Eb/\Kb})_{\Eb'/\Eb}$ for $\Eb'\otimes_{\Eb} (\Eb\otimes_{\Kb}R)=\Eb'\otimes_{\Kb}R$ constructed out of  $\rk_{\Eb/\Kb}$, and we shall denote it by $\rk_{\Eb'/\Eb}$. We denote the corresponding bivariant Sylvester module rank function by $\dim_{\Eb'/\Eb}(\cdot|\cdot)$.

Treating $\Eb'$ as an extension of $\Kb$, we also have the Sylvester matrix rank function $\rk_{\Eb'/\Kb}$ for $\Eb'\otimes_{\Kb}R$ constructed out of $\rk$.
 We denote the corresponding bivariant Sylvester module rank function by $\dim_{\Eb'/\Kb}(\cdot|\cdot)$.

\begin{proposition} \label{P-field extension composition}
We have $\rk_{\Eb'/\Eb}=\rk_{\Eb'/\Kb}$.
\end{proposition}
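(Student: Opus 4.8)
The plan is to show that the two Sylvester matrix rank functions agree on every matrix $A\in M_{n,m}(\Eb'\otimes_\Kb R)$ by comparing the defining limits, exploiting the fact (Theorem~\ref{T-infimum for matrix}) that each $\rk_\cF$ is simultaneously a limit over increasingly right-invariant $\cW$ and the infimum $\inf_{\cV\in\hat\cF}\frac{\rk_\cV(A)}{\dim(\cV)}$. Concretely, I would unwind the definitions as follows. For the extension $\Eb'/\Kb$, the finite approximation system consists of $V\otimes_\Kb R$ for $V$ a finite-dimensional $\Kb$-subspace of $\Eb'$, and for a fixed such $V$ with $A$'s entries lying in $V\otimes_\Kb R$ one has $\rk_{V\otimes_\Kb R}(A)=\dim\big((V\otimes_\Kb R)^n A\mid (\Eb'\otimes_\Kb R)^m\big)$ by Lemma~\ref{L-matrix vs module}. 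For the two-step extension $\Eb'/\Eb$ over the base ring $\Eb\otimes_\Kb R$, the finite approximation system consists of $W\otimes_\Eb(\Eb\otimes_\Kb R)=W\otimes_\Kb R$ for $W$ a finite-dimensional $\Eb$-subspace of $\Eb'$, and $\rk_{\Eb/\Kb}$ plays the role of the base rank function; here $\rk_{W\otimes_\Kb R}(A)$ is computed using the bivariant module rank function $\dim_{\Eb/\Kb}(\cdot\mid\cdot)$ over $\Eb\otimes_\Kb R$, namely $\dim_{\Eb/\Kb}\big((W\otimes_\Kb R)^n A\mid (\Eb'\otimes_\Kb R)^m\big)$.

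The key step is to reconcile these. First I would reduce to computing both sides as infima: by Theorem~\ref{T-infimum for matrix}, $\rk_{\Eb'/\Kb}(A)=\inf_V \frac{\rk_{V\otimes_\Kb R}(A)}{\dim_\Kb V}$ over finite-dimensional $\Kb$-subspaces $V\subseteq\Eb'$, and $\rk_{\Eb'/\Eb}(A)=\inf_W \frac{\rk_{W\otimes_\Kb R}(A)}{\dim_\Eb W}$ over finite-dimensional $\Eb$-subspaces $W\subseteq\Eb'$, where the numerator in the second formula is a $\dim_{\Eb/\Kb}$-dimension. The heart of the matter is the identity, for each finite-dimensional $\Eb$-subspace $W\subseteq\Eb'$,
$$
\dim_{\Eb/\Kb}\big((W\otimes_\Kb R)^n A\mid(\Eb'\otimes_\Kb R)^m\big)=\big(\dim_\Eb W\big)\cdot\inf_{V}\frac{\dim\big((V\otimes_\Kb R)^n A\mid(\Eb'\otimes_\Kb R)^m\big)}{\dim_\Kb V}\cdot\frac{1}{?}
$$
— more precisely, I expect that $\dim_{\Eb/\Kb}$ applied to a finitely generated $\Eb\otimes_\Kb R$-submodule $\cN_1$ of an $\Eb\otimes_\Kb R$-module $\cN_2$ equals $\lim_{\cW}\frac{\dim(\cW\cM_1\mid\cN_2)}{\dim(\cW)}$ by Lemma~\ref{L-amenable extension relative}, where $\cW$ runs over $V\otimes_\Kb R$ and $\cM_1$ is an $R$-generating set of $\cN_1$; taking $\cM_1=(W\otimes_\Kb R)^n A$'s generators (finitely many, since $W$ is finite-dimensional), $\cW\cM_1=(V\otimes_\Kb R)^n\cdot(W\otimes_\Kb R)^n A=((VW)\otimes_\Kb R)^n A$. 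Since as $V$ becomes large (more right-invariant in the $\Kb$-Følner sense) $VW$ is cofinal among finite-dimensional $\Kb$-subspaces of $\Eb'$ and $\dim_\Kb(VW)/\dim_\Kb V\to \dim_\Eb W$ (because $\Eb'$ is, up to the relevant limit, free over $\Eb$ — more carefully, $\frac{\dim_\Kb(VW)}{\dim_\Kb V}\le \dim_\Eb W$ always, and can be made arbitrarily close to it), one gets
$$
\dim_{\Eb/\Kb}\big((W\otimes_\Kb R)^n A\mid(\Eb'\otimes_\Kb R)^m\big)=\big(\dim_\Eb W\big)\cdot\rk_{\Eb'/\Kb}(A),
$$
using that $\rk_{\Eb'/\Kb}(A)$ is the infimum $=\lim$ over all finite-dimensional $\Kb$-subspaces. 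Dividing by $\dim_\Eb W$ and taking the infimum over $W$ yields $\rk_{\Eb'/\Eb}(A)=\rk_{\Eb'/\Kb}(A)$.

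The main obstacle is the bookkeeping in the displayed identity: one must carefully justify that $\lim_{V}\frac{\dim(((VW)\otimes_\Kb R)^n A\mid(\Eb'\otimes_\Kb R)^m)}{\dim_\Kb V}=(\dim_\Eb W)\rk_{\Eb'/\Kb}(A)$, which requires (a) that $VW$ runs cofinally through finite-dimensional $\Kb$-subspaces as $V$ does, so $\inf_V$ of the normalized $\dim$ over $VW$ equals the global infimum, and (b) the ratio $\dim_\Kb(VW)/\dim_\Kb V$ converges to $\dim_\Eb W$ along a right-Følner net for $\Kb\subseteq\Eb'$; the subtlety is that these two normalizations ($\dim_\Kb V$ versus $\dim_\Kb(VW)$) must be interchanged, which is legitimate precisely because both the limit and infimum descriptions of $\rk_{\Eb'/\Kb}$ are available (Theorem~\ref{T-infimum for matrix}) and because one can first pass to $V$ of the form $V_0 W$. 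I would handle (b) by noting $W$ can be taken with an $\Eb$-basis in $\cU$, so that $VW=\bigoplus$ of translates modulo a small error controlled by the $(\cdot,\varepsilon)$-invariance of $V$, exactly as in the proof of Lemma~\ref{L-field extension amenable}. Once these limit-interchange points are nailed down, everything else is a direct substitution.
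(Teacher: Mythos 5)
Your overall architecture coincides with the paper's---both arguments compare the two constructions along $\Kb$-subspaces of $\Eb'$ of the special form $\sum_{b\in B}Zb$, with $B$ an $\Eb$-basis of a finite-dimensional $\Eb$-subspace and $Z$ a finite-dimensional $\Kb$-subspace of $\Eb$, using the limit-equals-infimum characterizations on each level---but your central displayed identity is false for a fixed $W$. In general one only has
$$\dim_{\Eb/\Kb}\big((W\otimes_\Kb R)^nA\,\big|\,(\Eb'\otimes_\Kb R)^m\big)\ \ge\ (\dim_\Eb W)\,\rk_{\Eb'/\Kb}(A),$$
which gives $\rk_{\Eb'/\Eb}\ge\rk_{\Eb'/\Kb}$ but not the reverse inequality. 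Concretely, take $R=\Kb$, $\Eb=\Kb(s)$, $\Eb'=\Kb(s,t)$, $W=\Eb\cdot 1_{\Eb'}$, and let $A\in M_{2,1}(\Eb')$ be the column with entries $1$ and $t$: the left-hand side equals $2$ (for every nonzero finite-dimensional $\Kb$-subspace $V$ of $\Eb$ the sum $V+Vt$ is direct, so $\dim_\Kb(V+Vt)/\dim_\Kb(V)=2$), while the right-hand side is $1$. Your justification (a) is the culprit: for fixed $W$ the subspaces $\sum_{b\in B}Vb$ with $V\subseteq\Eb$ all lie inside $W$, so they are not cofinal among finite-dimensional $\Kb$-subspaces of $\Eb'$ (and the set product $VW$ is typically infinite-dimensional over $\Kb$, so it cannot be meant literally); moreover, even cofinality would not suffice, because $V'\mapsto\rk_{V'\otimes_\Kb R}(A)/\dim_\Kb(V')$ is not monotone, so its infimum over a cofinal subfamily need not equal the global infimum. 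What is actually required is that the subfamily contain members that are arbitrarily right invariant \emph{in} $\Eb'$, and right invariance of $V$ inside $\Eb$ says nothing about right invariance of $\sum_bVb$ under multiplication by elements of $\Eb'\setminus\Eb$.

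The missing ingredient---which is the real content of the paper's proof---is the construction, for a prescribed invariance target $(V_0,\delta)$ in $\Eb'$, of $W$ and $Z$ making $\sum_{b\in B}Zb$ right invariant in $\Eb'$: one takes $W$ to be $(\Eb V_0,\eta)$-invariant over $\Eb$, picks an auxiliary finite-dimensional $\Kb$-subspace $V'$ of $\Eb$ with $\sum_{b\in B}bV_0\subseteq\sum_{b'\in B'}b'V'$ for an $\Eb$-basis $B'$ of $W\Eb V_0$ containing $B$, and then takes $Z\subseteq\Eb$ to be $(V',\eta)$-invariant over $\Kb$; the estimate $\dim_\Kb\big((\sum_{b\in B}Zb)V_0\big)\le|B'|\dim_\Kb(ZV')\le(1+\eta)^2\dim_\Kb\big(\sum_{b\in B}Zb\big)$ is what closes the chain of approximations. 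In particular the order of quantifiers must be reversed relative to your sketch: $W$ and $Z$ are chosen \emph{after} the $\Eb'$-invariance target, not before, and the identity you want holds only up to $\varepsilon$ for these carefully chosen $W$. Your appeal to Lemma~\ref{L-field extension amenable} does not supply this step, since that lemma concerns the amenability of the single extension $\Eb/\Kb$ rather than the compatibility of invariance across the tower $\Kb\subseteq\Eb\subseteq\Eb'$. Once this estimate is in place, the remaining substitutions in your plan do go through.
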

\begin{proof} It suffices to show $\dim_{\Eb'/\Eb}(\cN_1|\cN_2)=\dim_{\Eb'/\Kb}(\cN_1|\cN_2)$ for all left $\Eb'\otimes_{\Kb}R$-modules $\cN_1\subseteq \cN_2$ with $\cN_1$ finitely generated.
Take a finitely generated $R$-submodule $\cM_1$ of $\cN_1$ generating $\cN_1$ as an $\Eb'\otimes_{\Kb}R$-module. Let $\varepsilon>0$. Then there are a  finite-dimensional $\Kb$-linear subspace $V$ of $\Eb'$ containing $1_{\Kb}$ and $\delta>0$ such that for any
nonzero finite-dimensional $\Kb$-linear subspace $W$ of $\Eb'$ satisfying $\dim_{\Kb}(WV)\le (1+\delta)\dim_{\Kb}(W)$ we have
\begin{align} \label{E-field extension composition1}
\dim_{\Eb'/\Kb}(\cN_1|\cN_2)\le \frac{\dim((W\otimes_{\Kb} R)\cM_1|\cN_2)}{\dim_{\Kb}(W)}\le \dim_{\Eb'/\Kb}(\cN_1|\cN_2)+\varepsilon.
\end{align}

Take $\eta>0$ with $(1+\eta)^2<1+\delta$.
Note that $(\Eb\otimes_{\Kb}R)\cM_1$ is a finitely generated $\Eb\otimes_{\Kb}R$-submodule of $\cN_1$ generating $\cN_1$ as an $\Eb'\otimes_{\Kb}R$-module, and $\Eb V$ is a nonzero finite-dimensional $\Eb$-linear subspace of $\Eb'$. Also note that
$U\otimes_\Eb(\Eb\otimes_\Kb R)=U\otimes_\Kb R$ for every $\Eb$-linear subspace $U$ of $\Eb'$.
Thus there is some
nonzero finite-dimensional $\Eb$-linear subspace $U$ of $\Eb'$ satisfying $\dim_{\Eb}(U\Eb V)<(1+\eta)\dim_{\Eb}(U)$ such that
\begin{align} \label{E-field extension composition2}
  \dim_{\Eb'/\Eb}(\cN_1|\cN_2)\le \frac{\dim_{\Eb/\Kb}((U\otimes_\Kb R)(\Eb\otimes_{\Kb} R)\cM_1|\cN_2)}{\dim_{\Eb}(U)}\le \dim_{\Eb'/\Eb}(\cN_1|\cN_2)+\varepsilon.
  \end{align}

Take an $\Eb$-basis $B$ for $U$ and an $\Eb$-basis $B'$ for $U\Eb V$ with $B\subseteq B'$. Then there is some nonzero finite-dimensional $\Kb$-linear subspace $V'$ of $\Eb$
with $1_{\Kb}\in V'$ and $\sum_{b\in B}bV\subseteq \sum_{b'\in B'}b'V'$.
Note that $\sum_{b\in B}(b\otimes_{\Kb} 1_R)\cM_1$ is a finitely generated $R$-submodule of $(U\otimes_\Kb R)(\Eb\otimes_{\Kb}R)\cM_1$ generating $(U\otimes_\Kb R)(\Eb\otimes_{\Kb}R)\cM_1$ as an $\Eb\otimes_{\Kb}R$-module.
Thus there is some
nonzero finite-dimensional $\Kb$-linear subspace $Z$ of $\Eb$ satisfying $\dim_{\Kb}(ZV')<(1+\eta)\dim_{\Kb}(Z)$ such that
\begin{align} \label{E-field extension composition3}
\dim_{\Eb/\Kb}((U\otimes_\Kb R)(\Eb\otimes_{\Kb}R)\cM_1|\cN_2)&\le \frac{\dim((Z\otimes_{\Kb}R)\sum_{b\in B}(b\otimes_{\Kb} 1_R)\cM_1|\cN_2)}{\dim_\Kb(Z)}\\
\nonumber &\hspace*{10mm} \le \dim_{\Eb/\Kb}((U\otimes_\Kb R)(\Eb\otimes_{\Kb}R)\cM_1|\cN_2)+\varepsilon.
\end{align}

Set $W=\sum_{b\in B}Zb$. Then
$$\dim_{\Kb}(W)=|B|\dim_{\Kb}(Z)=\dim_{\Eb}(U)\dim_{\Kb}(Z).$$
Thus
\begin{align*}
\dim_{\Kb}(WV)&=\dim_{\Kb}(\sum_{b\in B}ZbV)\\
&\le \dim_{\Kb}(\sum_{b'\in B'}Zb'V')\\
&=|B'|\dim_{\Kb}(ZV')\\
&= \dim_{\Eb}(U\Eb V)\dim_{\Kb}(ZV')\\
&\le (1+\eta)\dim_{\Eb}(U)(1+\eta)\dim_{\Kb}(Z)\\
&=(1+\eta)^2\dim_{\Kb}(W)\le (1+\delta)\dim_{\Kb}(W).
\end{align*}
Therefore \eqref{E-field extension composition1} holds.
Note that $(W\otimes_{\Kb}R)\cM_1=(Z\otimes_{\Kb}R)\sum_{b\in B}(b\otimes_{\Kb}1_R)\cM_1$.
Now we have
\begin{align*}
\dim_{\Eb'/\Eb}(\cN_1|\cN_2)&\overset{\eqref{E-field extension composition2}}\le \frac{\dim_{\Eb/\Kb}((U\otimes_\Kb R)(\Eb\otimes_{\Kb}R) \cM_1|\cN_2)}{\dim_{\Eb}(U)}\\
&\overset{\eqref{E-field extension composition3}}\le \frac{\dim((Z\otimes_{\Kb}R)\sum_{b\in B}(b\otimes_{\Kb}1_R)\cM_1|\cN_2)}{\dim_{\Kb}(Z)\dim_{\Eb}(U)}\\
&\overset{\eqref{E-field extension composition3}}\le \frac{\dim_{\Eb/\Kb}((U\otimes_\Kb R)(\Eb\otimes_{\Kb}R) \cM_1|\cN_2)}{\dim_{\Eb}(U)}+\varepsilon\\
&\overset{\eqref{E-field extension composition2}}\le \dim_{\Eb'/\Eb}(\cN_1|\cN_2)+2\varepsilon,
\end{align*}
and hence
\begin{align} \label{E-field extension composition4}
 \dim_{\Eb'/\Eb}(\cN_1|\cN_2)\le \frac{\dim((W\otimes_{\Kb}R)\cM_1|\cN_2)}{\dim_{\Kb}(W)}\le \dim_{\Eb'/\Eb}(\cN_1|\cN_2)+2\varepsilon.
 \end{align}
 Comparing \eqref{E-field extension composition1} and \eqref{E-field extension composition4} we get
 $$ |\dim_{\Eb'/\Eb}(\cN_1|\cN_2)-\dim_{\Eb'/\Kb}(\cN_1|\cN_2)|\le 2\varepsilon. $$
 Letting $\varepsilon\to 0$ we obtain $\dim_{\Eb'/\Eb}(\cN_1|\cN_2)=\dim_{\Eb'/\Kb}(\cN_1|\cN_2)$.
\end{proof}

\subsection{Comparison with construction of Jaikin-Zapirain} \label{SS-comparison}

Let $\Kb$ be a field and $\Eb$ a field containing $\Kb$. Let $R$ be a unital $\Kb$-algebra with a Sylvester matrix rank function $\rk$. Then we have
$\cF$ in Example~\ref{E-field extension}, and the Sylvester matrix rank function $\rk_{\cF}$ for $\Eb\otimes_\Kb R$.

When $\Eb$ is an algebraic extension of $\Kb$, Jaikin-Zapirain constructed a Sylvester matrix rank function $\rk_{\Eb\otimes_\Kb R}$ for $\Eb\otimes_\Kb R$ extending $\rk$ \cite[Section 7.5]{JZ19}.
Let $A\in M_{n, m}(\Eb\otimes_\Kb R)$. Then there is a finite subextension $\Eb_0/\Kb$ of $\Eb/\Kb$ such that $A\in M_{n, m}(\Eb_0\otimes_{\Kb} R)$. Take an $R$-basis $w_1, \dots, w_l$ of $(\Eb_0\otimes_\Kb R)^n$ as a left $R$-module and an $R$-basis $\tilde{w}_1, \dots, \tilde{w}_p$ of $(\Eb_0\otimes_{\Kb} R)^m$ as a left $R$-module. One has
\begin{eqnarray*}
\left[\begin{matrix} w_1A \\ \vdots \\ w_lA \end{matrix}\right]=B\left[\begin{matrix} \tilde{w}_1\\ \vdots\\ \tilde{w}_p \end{matrix}\right]
 \end{eqnarray*}
for some $B\in M_{l, p}(R)$. Then $\rk_{\Eb\otimes_{\Kb} R}(A)$ is defined by
$$\rk_{\Eb\otimes_{\Kb} R}(A)=\frac{\rk(B)}{\dim_{\Kb}(\Eb_0)},$$
and does not depend on the choice of $\Eb_0$.

\begin{proposition} \label{P-algebraic}
We have $\rk_{\Eb\otimes_{\Kb} R}=\rk_{\cF}$.
\end{proposition}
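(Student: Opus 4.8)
The plan is to show that for every $A\in M_{n,m}(\Eb\otimes_\Kb R)$ the infimum defining $\rk_\cF(A)$ is in fact attained (and equals) the particular value that Jaikin-Zapirain uses. By Theorem~\ref{T-infimum for matrix} we have
$$\rk_\cF(A)=\inf_{\cV\in\hat\cF}\frac{\rk_\cV(A)}{\dim(\cV)},$$
where $\cF$ consists of the modules $V\otimes_\Kb R$ for $V$ a finite-dimensional $\Kb$-subspace of $\Eb$, so $\dim(V\otimes_\Kb R)=\dim_\Kb(V)$. The key observation is that when $\Eb/\Kb$ is \emph{algebraic}, a finite subextension $\Eb_0$ is itself a field, so the subspace $V=\Eb_0$ is closed under multiplication and, more to the point, for any finite subextension $\Eb_1\supseteq\Eb_0$ the module $\Eb_1\otimes_\Kb R$ is \emph{$(V,0)$-invariant} for $V=\Eb_0$ (one even has $\Eb_1\cdot\Eb_0=\Eb_1$). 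Thus the subnet of $\hat\cF$ consisting of the $\Eb_1\otimes_\Kb R$ for finite subextensions $\Eb_1/\Kb$ is cofinal in the order ``becomes more and more right invariant,'' and along this subnet $\rk_\cF(A)$ is the limit of $\rk_{\Eb_1\otimes_\Kb R}(A)/\dim_\Kb(\Eb_1)$.

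First I would fix $A$ and a finite subextension $\Eb_0/\Kb$ with $A\in M_{n,m}(\Eb_0\otimes_\Kb R)$, and verify directly from the definition in Section~\ref{S-rank for amenable extension} that $\rk_{\Eb_0\otimes_\Kb R}(A)=\rk(B)$, where $B$ is exactly the matrix appearing in Jaikin-Zapirain's definition: indeed $(\Eb_0\otimes_\Kb R)^n\cdot A\subseteq(\Eb_0\otimes_\Kb R)^m$ since $\Eb_0$ is a subfield, so we may take $\tilde{\cW}=\cW=\Eb_0\otimes_\Kb R$ in \eqref{E-matrix}, and the $R$-bases used there are precisely $\Kb$-bases of $\Eb_0$ tensored with the identity of $R$. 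Hence $\rk_{\Eb_0\otimes_\Kb R}(A)/\dim(\Eb_0\otimes_\Kb R)=\rk(B)/\dim_\Kb(\Eb_0)=\rk_{\Eb\otimes_\Kb R}(A)$, the last equality being Jaikin-Zapirain's well-definedness (independence of $\Eb_0$). This already gives $\rk_\cF(A)\le\rk_{\Eb\otimes_\Kb R}(A)$.

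For the reverse inequality I would use Lemma~\ref{L-matrix vs module} together with the independence of $\Eb_0$ in Jaikin-Zapirain's construction. Given an arbitrary $\cV=V\otimes_\Kb R\in\hat\cF$, enlarge $V$ to $\Eb_1$, the subfield of $\Eb$ generated by $\Kb\cup V\cup\Eb_0$; then $\Eb_1/\Kb$ is finite (algebraicity!), $\cV\subseteq\Eb_1\otimes_\Kb R$, and $A\in M_{n,m}(\Eb_1\otimes_\Kb R)$. By Lemma~\ref{L-rank on finite}(v) (monotonicity of $\rk_\cW$ in $\cW$) and the multiplicativity of dimensions in a tower of field extensions, one checks that
$$\frac{\rk_{\cV}(A)}{\dim_\Kb(V)}\;\ge\;\frac{\rk_{\Eb_1\otimes_\Kb R}(A)}{\dim_\Kb(\Eb_1)}\cdot\frac{1}{[\Eb_1:\Eb_0]}\cdot[\Eb_1:\Eb_0]\;=\;\rk_{\Eb\otimes_\Kb R}(A);$$
more carefully, since $\Eb_1\otimes_\Kb R=\bigoplus_{i=1}^{d}b_i(\Eb_0\otimes_\Kb R)$ for a $\Eb_0$-basis $b_1,\dots,b_d$ of $\Eb_1$ (with $d=[\Eb_1:\Eb_0]$), one has $\rk_{\Eb_1\otimes_\Kb R}(A)=d\cdot\rk_{\Eb_0\otimes_\Kb R}(A)$ by Lemma~\ref{L-invariant rk}(ii) and additivity, giving $\rk_{\Eb_1\otimes_\Kb R}(A)/\dim_\Kb(\Eb_1)=\rk_{\Eb\otimes_\Kb R}(A)$; combined with $\rk_{\cV}(A)\ge$ some lower bound in terms of $\rk_{\Eb_1\otimes_\Kb R}(A)$, one concludes $\rk_\cF(A)\ge\rk_{\Eb\otimes_\Kb R}(A)$. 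I expect the main obstacle to be this last monotonicity/lower-bound step: Lemma~\ref{L-rank on finite}(v) gives an upper bound $\rk_\cV(A)\le\rk_{\Eb_1\otimes_\Kb R}(A)$, which is the wrong direction, so I would instead argue that along the \emph{cofinal} net of finite subextensions $\Eb_1$ the quotient $\rk_{\Eb_1\otimes_\Kb R}(A)/\dim_\Kb(\Eb_1)$ is \emph{constant} equal to $\rk_{\Eb\otimes_\Kb R}(A)$, and invoke Theorem~\ref{T-infimum for matrix}'s assertion that $\rk_\cF(A)=\lim_\cW\rk_\cW(A)/\dim(\cW)$ is a genuine limit over $\cW$ becoming more and more right invariant — since this cofinal subnet of subfields is unbounded in right invariance (as $\Eb_1\cdot\Eb_0=\Eb_1$ for $\Eb_0\subseteq\Eb_1$), the limit must equal the constant value $\rk_{\Eb\otimes_\Kb R}(A)$.
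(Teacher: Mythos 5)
Your proposal is correct and, after you discard the wrong-direction monotonicity detour, it lands on exactly the paper's argument: for any $(V\otimes_\Kb R,\varepsilon)$ one can find a finite subextension $\Eb_0\supseteq V$ with $A\in M_{n,m}(\Eb_0\otimes_\Kb R)$, so that $\cW=\Eb_0\otimes_\Kb R$ is $(V\otimes_\Kb R,\varepsilon)$-invariant and $\rk_\cW(A)/\dim(\cW)$ equals the constant $\rk_{\Eb\otimes_\Kb R}(A)$, whence the limit guaranteed by Theorem~\ref{T-infimum for matrix} must equal that constant. The self-identified obstacle is not an issue since your final route avoids it entirely.
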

\begin{proof} Let $A\in M_{n, m}(\Eb\otimes_\Kb R)$. For any finite-dimensional $\Kb$-linear subspace $V$ of $\Eb$, take a finite subextension $\Eb_0/\Kb$ of $\Eb/\Kb$ such that $A\in M_{n, m}(\Eb_0\otimes_{\Kb} R)$ and $V\subseteq \Eb_0$. Then $\cW:=\Eb_0\otimes_{\Kb} R$ is in $\hat{\cF}$ and is $(V\otimes_{\Kb} R, \varepsilon)$-invariant for every $\varepsilon>0$. Clearly
$$ \rk_{\Eb\otimes_{\Kb} R}(A)=\frac{\rk_\cW(A)}{\dim(\cW)}.$$
From Theorem~\ref{T-infimum for matrix} we conclude $\rk_{\Eb\otimes_{\Kb} R}(A)=\rk_{\cF}(A)$.
\end{proof}

Next we consider the case $\Eb=\Kb(t)$ is a transcendental extension of $\Kb$. Let $f$ be  a monic polynomial in $\Kb[t]$ of degree $d$. Denote by $\pi_f$ the quotient homomorphism $R[t]\rightarrow R[t]/R[t]f$.  Note that $R[t]/R[t]f$ is a free left $R$-module with a basis $t^j+R[t]f$ for $j=0, 1, \dots, d-1$.
Let $w=(w_1, \dots, w_d)$ be an $R$-basis of $R[t]/R[t]f$ as left $R$-module.  Then right multiplication by $\pi_f(a)$ for $a\in R[t]$ induces a unital ring homomorphism $\psi_{f, w}: R[t]\rightarrow M_d(R)$. Explicitly,
\begin{eqnarray*}
\left[\begin{matrix} w_1 \\ \vdots \\ w_d \end{matrix}\right]\pi_f(a)=\psi_{f, w}(a)\left[\begin{matrix} w_1\\ \vdots\\ w_d \end{matrix}\right]
 \end{eqnarray*}
 for all $a\in R[t]$. Denote by $\rk_f$ the induced Sylvester matrix rank function on $R[t]$, i.e.
 $$\rk_f(A)=\frac{1}{d}\rk(\psi_{f, w}(A))$$
 for all $A\in M_{n, m}(R[t])$. Note that if we choose another $R$-basis $v=(v_1, \dots, v_d)$ of $R[t]/R[t]f$, then there is some invertible $B\in M_d(R)$ such that
 \begin{eqnarray*}
\left[\begin{matrix} w_1 \\ \vdots \\ w_d \end{matrix}\right]=B\left[\begin{matrix} v_1\\ \vdots\\ v_d \end{matrix}\right],
 \end{eqnarray*}
 whence $\psi_{f, w}(a)=B\psi_{f, v}(a)B^{-1}$ for all $a\in R[t]$ and thus $\rk_f$ does not depend on the choice of $w$.

 In an earlier version of this paper the following lemma is stated only in the case $f_i=t^i$ for all $i$. We are grateful to the referee for pointing out that the proof actually works in general situation.

\begin{lemma} \label{L-monic}
Let $\{f_i\}_{i\in \Nb}$ be a sequence of monic polynomials in $\Kb[t]$ with $\lim_{i\to \infty}\deg(f_i)=\infty$. Then
$$\rk_{\cF}(A)=\lim_{i\to \infty}\rk_{f_i}(A)$$
for every $A\in M_{n, m}(R[t])$.
\end{lemma}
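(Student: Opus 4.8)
The plan is to recognize $\rk_{f_i}$ as $\rk_{\cW_i}$ for a suitable element $\cW_i \in \hat{\cF}$, and then invoke Theorem~\ref{T-infimum for matrix}. Concretely, for a monic polynomial $f$ of degree $d$, the natural candidate is $\cW = V_{d} \otimes_{\Kb} R$ where $V_d$ is the $\Kb$-linear span of $1, t, \dots, t^{d-1}$ in $\Eb = \Kb(t)$; this is a $d$-dimensional $\Kb$-subspace, so $\cW \in \hat{\cF}$ with $\dim(\cW) = d$. The first step is to check that $\rk_{f}(A) = \frac{1}{d}\rk_{\cW}(A)$ for every $A \in M_{n,m}(R[t])$, i.e. that $\rk_{\cW}(A) = \rk(\psi_{f,w}(A))$. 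For this I would compare the two constructions: on one side, $\rk_{\cW}(A) = \rk(B)$ where $B$ is obtained by writing $w_i A$ (for $w_i$ an $R$-basis of $\cW^n$) in terms of an $R$-basis of $\widetilde{\cW}^m$ for $\widetilde{\cW} \supseteq \cW + \cW A$; on the other, $\rk_f(A) = \frac{1}{d}\rk(\psi_{f,w}(A))$ where $\psi_{f,w}$ reduces modulo $R[t]f$. The point is that reducing modulo $R[t]f$ inside $R[t]$ and then taking $\rk$ should give the same answer as working inside the domain $\Kb(t) \otimes_\Kb R = \Eb \otimes_\Kb R$ with the finite approximation system $\cF$, because $R[t]/R[t]f \cong \cW$ as left $R$-modules via $t^j + R[t]f \mapsto (t^j \otimes 1_R)$, and matrix rank over $R$ is insensitive to the ambient module once we know by Lemma~\ref{L-matrix vs module} that $\rk_{\cW}(A) = \dim(\cW^n A \mid (\Eb\otimes_\Kb R)^m)$. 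I expect the identification $\rk_{\cW}(A) = \rk(\psi_{f,w}(A))$ to follow by tracking the bases carefully, using that $A$ has entries in $R[t]$ so $\cW A \subseteq V_{d + \deg A - 1}\otimes_\Kb R$ and the reduction map $R[t] \to R[t]/R[t]f$ is $R$-linear.

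Once this identification is in place, the second step is to verify that $\cW_i := V_{\deg(f_i)}\otimes_\Kb R$ becomes more and more right invariant as $i \to \infty$. Fix $A \in M_{n,m}(R[t])$ and let $N$ be larger than the degrees of all entries of $A$. Then for $\cW = V_d \otimes_\Kb R$ with $d > N$ we have $\cW + \cW \{A_{k\ell}\} \subseteq V_{d + N}\otimes_\Kb R =: \widetilde{\cW}$, and $\dim(\widetilde{\cW}) = d + N \le (1 + \varepsilon)d = (1+\varepsilon)\dim(\cW)$ once $d \ge N/\varepsilon$. Since a basic neighborhood of $\rk_{\cF}(A)$ in Definition~\ref{D-limit} is determined by a single finite subset of $S$ (here we can take $\{A_{k\ell}\}$) together with an $\varepsilon$, and the condition of $(\cV, \delta)$-invariance for the relevant $\cV \in \hat{\cF}$ is eventually satisfied by all $\cW_i$ with $\deg(f_i)$ large (this requires knowing a $(\cV,\delta)$-invariance statement for $\cV$ a fixed finite-dimensional $\Kb$-subspace, which again reduces to a degree count), Theorem~\ref{T-infimum for matrix} gives
$$\lim_{i\to\infty} \frac{\rk_{\cW_i}(A)}{\dim(\cW_i)} = \lim_{\cW} \frac{\rk_{\cW}(A)}{\dim(\cW)} = \rk_{\cF}(A).$$
Combining with $\rk_{f_i}(A) = \frac{1}{\deg(f_i)}\rk_{\cW_i}(A) = \frac{\rk_{\cW_i}(A)}{\dim(\cW_i)}$ from the first step yields $\lim_{i\to\infty}\rk_{f_i}(A) = \rk_{\cF}(A)$, as desired.

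The main obstacle I anticipate is the first step: cleanly proving $\rk_f(A) = \frac{1}{d}\rk_{\cW}(A)$, i.e. that the rank computed via the matrix representation $\psi_{f,w}$ on $R[t]/R[t]f$ agrees with $\rk_{\cW}(A)$. The subtlety is that $\psi_{f,w}$ is a ring homomorphism $R[t] \to M_d(R)$ coming from \emph{right} multiplication on $R[t]/R[t]f$, and one must match this up with the left-module-theoretic definition of $\rk_{\cW}$. I would handle this by the following observation: $\rk_{\cW}(A)$ equals $\dim(\cW^n A \mid \widetilde{\cW}^m)$ where $\widetilde{\cW} = V_{d+N}\otimes_\Kb R$, and $\cW^n A$ is the $R$-span of the rows $w_i A$; meanwhile $\rk(\psi_{f,w}(A))$ is $\dim(R^n \psi_{f,w}(A) \mid R^m)$ after identifying $R^d \cong R[t]/R[t]f$ suitably. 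The key computation is that reduction modulo $R[t]f$, applied to the matrix identity $\begin{bmatrix} w_1 A \\ \vdots \\ w_l A\end{bmatrix} = B\begin{bmatrix}\tilde w_1 \\ \vdots \\ \tilde w_p\end{bmatrix}$, is compatible with the passage to $M_d(R)$ in a way that preserves $\rk$; this ultimately rests on the fact that $\widetilde{\cW}^m \to (R[t]/R[t]f)^m$ is a split surjection of left $R$-modules whose kernel intersects $\cW^n A$ trivially, plus Lemma~\ref{L-matrix vs module}. Apart from this bookkeeping, the argument is a routine degree estimate, so I expect the proof to be short once the right-versus-left-module matching is spelled out.
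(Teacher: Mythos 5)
Your overall strategy --- comparing $\rk_{f_i}$ with the normalized rank $\rk_{\cW_{d_i}}(A)/\dim(\cW_{d_i})$ for $\cW_{d}=V_{d}\otimes_\Kb R$ and then invoking Theorem~\ref{T-infimum for matrix} via a right-invariance degree count --- is exactly the right one and matches the paper. However, the exact identity you build the proof on, namely $\rk_f(A)=\frac{1}{d}\rk_\cW(A)$, is false. Take $R=\Kb$, $A=(t)\in M_{1,1}(\Kb[t])$, and $f=t^d$. Then $\cW_d A=\spn_\Kb(t,\dots,t^d)$ has dimension $d$, so $\rk_{\cW_d}(A)/\dim(\cW_d)=1$; but right multiplication by $t$ on $\Kb[t]/\Kb[t]t^d$ kills $t^{d-1}$, so $\rk(\psi_{f,w}(A))=d-1$ and $\rk_f(A)=\frac{d-1}{d}\neq 1$. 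The same example refutes your proposed mechanism for the identification: the reduction map $\tilde\cW^m\to (R[t]/R[t]f)^m$ does \emph{not} have kernel meeting $\cW^nA$ trivially (here $t^d\in\cW_d A$ reduces to $0$), precisely because $\cW^nA$ contains elements of degree $\ge d$.

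The correct version of your first step, and what the paper actually proves, is only an approximate comparison. Writing $p$ for a degree bound on the entries of $A$ (so $A\in M_{n,m}(\cW_{p+1})$), the matrix $B_i$ computing $\rk_{\cW_{d_i}}(A)$ and the matrix $\tilde B_i$ computing $\rk_{f_i}(A)$ agree on the block of rows indexed by basis elements $t^j\bar{}\otimes\delta$ with $j\le d_i-p-1$, since for those rows multiplication by $A$ never reaches degree $d_i$ and no reduction modulo $f_i$ occurs. Discarding the remaining $pn$ rows from each matrix changes the rank by at most $pn$, so $|\rk(B_i)-\rk(\tilde B_i)|\le 2pn$, and dividing by $d_i\to\infty$ makes the discrepancy vanish. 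With this replacement your second step (the right-invariance of the $\cW_{d_i}$, which for a general $\cV\in\cF$ requires clearing a common denominator $h$ so that $\cV\subseteq (h\otimes_\Kb 1_R)\cW_j$ before the degree count) goes through and the argument closes as you describe.
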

\begin{proof} Let $A\in M_{n, m}(R[t])$.
For each $i\in \Nb$, put $d_i=\deg(f_i)$, $V_i={\rm span}_{\Kb}(1, t, \dots, t^{i-1})\subseteq \Kb[t]$ and $\cW_i=V_i\otimes_{\Kb}R\in \hat{\cF}(\Kb(t)\otimes_{\Kb}R)$. Take $p\in \Nb$ such that $A\in M_{n, m}(\cW_{p+1})$. Then $(\cW_{d_i})^nA\subseteq (\cW_{d_i+p})^m$ for every $i\in \Nb$. Take an $R$-basis $\bar{w}_{i, 1}, \dots, \bar{w}_{i, d_i n}$ of $(\cW_{d_i})^n$ as  a left $R$-module and an $R$-basis $\hat{w}_{i, 1}, \dots, \hat{w}_{i, (d_i+p)m}$ of $(\cW_{d_i+p})^m$. One has
\begin{eqnarray*}
\left[\begin{matrix} \bar{w}_{i, 1}A \\ \vdots \\ \bar{w}_{i, d_i n}A \end{matrix}\right]=B_i\left[\begin{matrix} \hat{w}_{i, 1}\\ \vdots\\ \hat{w}_{i, (d_i+p)m} \end{matrix}\right]
 \end{eqnarray*}
for some $B_i\in M_{d_i n, (d_i+p)m}(R)$. Then $\rk_{\cW_{d_i}}(A)=\rk(B_i)$ and $\dim(\cW_{d_i})=d_i$. Also take an $R$-basis $w_{i, 1}, \dots, w_{i, d_i n}$ of $(R[t]/R[t]f_i)^n$ as  a left $R$-module and an $R$-basis $\tilde{w}_{i, 1}, \dots, \tilde{w}_{i, d_i m}$ of $(R[t]/R[t]f_i)^m$. One has
\begin{eqnarray*}
\left[\begin{matrix} w_{i, 1}A \\ \vdots \\ w_{i, d_i n}A \end{matrix}\right]=\tilde{B}_i\left[\begin{matrix} \tilde{w}_{i, 1}\\ \vdots\\ \tilde{w}_{i, d_i m} \end{matrix}\right]
 \end{eqnarray*}
for some $\tilde{B}_i\in M_{d_i n, d_i m}(R)$. Then $\rk_{f_i}(A)=\frac{1}{d_i}\rk(\tilde{B}_i)$.

When $d_i>p$,  if we choose the above four bases the most natural ones, then
$$\tilde{B}_i=\left[\begin{matrix} \bar{B}_i \\ \tilde{C}_i \end{matrix}\right] \mbox{ and } B_i=\left[\begin{matrix} \bar{B}_i & \\ C_i & D_i \end{matrix}\right]$$
for some $\bar{B}_i\in M_{(d_i-p)n, d_i m}(R), \tilde{C}_i, C_i\in M_{pn, d_i m}(R)$, and $D_i\in M_{pn, pm}(R)$, and hence
$$ |\rk(\tilde{B}_i)-\rk(B_i)|\le |\rk(\tilde{B}_i)-\rk(\bar{B}_i)|+|\rk(\bar{B}_i)-\rk(B_i)|\le pn+pn=2pn.$$
Since $d_i\to \infty$ as $i\to \infty$, we get
$$\lim_{i\to \infty}\big(\rk_{f_i}(A)-\frac{\rk_{\cW_{d_i}}(A)}{\dim(\cW_{d_i})}\big)=\lim_{i\to \infty}\frac{\rk(\tilde{B}_i)-\rk(B_i)}{d_i}=0.$$

Note that every $\cV\in \cF$ is contained in $ (h\otimes_\Kb 1_R) \cW_j$ for some $j\in \Nb$ and invertible $h\in \Kb(t)$. It is easily checked that
for every $\cV\in \cF$ and $\varepsilon>0$, when $i\in \Nb$ is large enough, $\cW_i$ is $(\cV, \varepsilon)$-invariant.
Since $d_i\to \infty$ as $i\to \infty$, it follows that for every $\cV\in \cF$ and $\varepsilon>0$, when $i\in \Nb$ is large enough, $\cW_{d_i}$ is $(\cV, \varepsilon)$-invariant.
From Theorem~\ref{T-infimum for matrix} we conclude
$$\rk_{\cF}(A)=\lim_{i\to \infty}\frac{\rk_{\cW_{d_i}}(A)}{\dim(\cW_{d_i})}=\lim_{i\to \infty}\rk_{f_i}(A).$$
\end{proof}

A unital ring $R'$ is called {\it von Neumann regular} if for any $a\in R'$ there is some $b\in R'$ such that $aba=a$ \cite{Goodearl} \cite[page 61]{Lam01}. The Sylvester matrix rank function $\rk$ for  $R$ is called {\it regular} if there are a ring homomorphism $\varphi$ from $R$ to some von Neumann regular ring $R'$ and a Sylvester matrix rank function $\rk'$ for $R'$ such that $\rk(A)=\rk'(\varphi(A))$ for all $A\in M_{n, m}(R)$.

When $\Eb=\Kb(t)$ is a transcendental extension of $\Kb$ and $\rk$ is regular, Jaikin-Zapirain constructed a Sylvester matrix rank function $\rk_{\Kb(t)\otimes_\Kb R}$ for $\Kb(t)\otimes_\Kb R$ extending $\rk$ \cite[Section 7.2]{JZ19}.
Let $A\in M_{n, m}(R[t])$.
Then $\rk_{\Kb(t)\otimes_\Kb R}(A)$ is defined by
$$\rk_{\Kb(t)\otimes_\Kb R}(A)=\lim_{i\to \infty}\rk_{t^i}(A).$$

\begin{proposition} \label{P-transcendental}
We have $\rk_{\Kb(t)\otimes_\Kb R}=\rk_{\cF}$.
\end{proposition}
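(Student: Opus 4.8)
The plan is to reduce Proposition~\ref{P-transcendental} to Lemma~\ref{L-monic} by identifying the two constructions on the generating ring $R[t]$ and then transporting the equality to all of $\Kb(t)\otimes_\Kb R$ via the normality structure. First I would observe that, by definition, $\rk_{\Kb(t)\otimes_\Kb R}(A)=\lim_{i\to\infty}\rk_{t^i}(A)$ for every $A\in M_{n,m}(R[t])$, while Lemma~\ref{L-monic} applied to the sequence $f_i=t^i$ (which is monic with $\deg(f_i)=i\to\infty$) gives $\rk_{\cF}(A)=\lim_{i\to\infty}\rk_{t^i}(A)$ for every $A\in M_{n,m}(R[t])$. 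Hence $\rk_{\Kb(t)\otimes_\Kb R}(A)=\rk_{\cF}(A)$ for all matrices over $R[t]$.

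Next I would upgrade this to all matrices over $\Kb(t)\otimes_\Kb R$. Given $A\in M_{n,m}(\Kb(t)\otimes_\Kb R)$, each entry $A_{i,j}$ lies in $\Kb(t)\otimes_\Kb R$, and there is a common nonzero denominator $h\in\Kb[t]$ (a unit in $\Kb(t)$) such that $(h\otimes_\Kb 1_R)A\in M_{n,m}(R[t])$. Since $h\otimes_\Kb 1_R$ is invertible in $\Kb(t)\otimes_\Kb R$, both $\rk_{\cF}$ and $\rk_{\Kb(t)\otimes_\Kb R}$ are unchanged under left multiplication by the scalar matrix $(h\otimes_\Kb 1_R)I_n$; this is the general fact that a Sylvester matrix rank function is invariant under multiplication by invertible matrices (recorded right after Definition~\ref{D-matrix rank}). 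Therefore $\rk_{\cF}(A)=\rk_{\cF}((h\otimes_\Kb 1_R)A)=\rk_{\Kb(t)\otimes_\Kb R}((h\otimes_\Kb 1_R)A)=\rk_{\Kb(t)\otimes_\Kb R}(A)$, where the middle equality is the case already settled over $R[t]$.

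The one point that requires a little care — and the only genuine obstacle — is checking that Jaikin-Zapirain's function $\rk_{\Kb(t)\otimes_\Kb R}$, as constructed in \cite[Section 7.2]{JZ19}, is indeed invariant under left multiplication by $(h\otimes_\Kb 1_R)I_n$ for arbitrary $h\in\Kb[t]\setminus\{0\}$, i.e.\ that it really is a Sylvester matrix rank function on all of $\Kb(t)\otimes_\Kb R$ and not merely a consistent family of values on $M_{n,m}(R[t])$. This holds because $\rk_{\Kb(t)\otimes_\Kb R}$ is defined (in \cite{JZ19}) precisely so as to extend to a Sylvester matrix rank function for $\Kb(t)\otimes_\Kb R$ whose restriction to $R[t]$-matrices is computed by the limit $\lim_i\rk_{t^i}$; so invariance under multiplication by invertible matrices is automatic from Definition~\ref{D-matrix rank}. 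Hence no separate computation is needed: once the agreement on $M_{n,m}(R[t])$ is in hand, the reduction above is purely formal, and we conclude $\rk_{\Kb(t)\otimes_\Kb R}=\rk_{\cF}$.
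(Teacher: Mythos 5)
Your proof is correct and follows essentially the same route as the paper: reduce to matrices over $R[t]$ using that every matrix over $\Kb(t)\otimes_\Kb R$ is $(h\otimes_\Kb 1_R)A$ with $h$ invertible in $\Kb(t)$ and $A$ over $R[t]$, then apply Lemma~\ref{L-monic} with $f_i=t^i$. Your extra remark about why Jaikin-Zapirain's $\rk_{\Kb(t)\otimes_\Kb R}$ is a genuine Sylvester matrix rank function is a fair point of care, but it is part of the cited construction in \cite{JZ19} and is taken for granted in the paper as well.
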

\begin{proof}
Since each element of $M_{n, m}(\Kb(t)\otimes_{\Kb} R)$ is of the form $(f\otimes_\Kb 1_R)A$ for some $A\in M_{n, m}(R[t])$ and invertible $f\in \Kb(t)$,  it suffices to show $\rk_{\Kb(t)\otimes_\Kb R}(A)=\rk_{\cF}(A)$ for $A\in M_{n, m}(R[t])$. This follows from Lemma~\ref{L-monic}.
\end{proof}

Propositions~\ref{P-algebraic} and \ref{P-transcendental} tell us that our construction of $\rk_{\cF}$ extends the construction of Jaikin-Zapirain for $\rk_{\Eb\otimes_{\Kb} R}$ (when $\Eb/\Kb$ is algebraic) and $\rk_{\Kb(t)\otimes_\Kb R}$ in \cite{JZ19}.
This answers his question \cite[Question 8.8]{JZ17} affirmatively.

We remark that Corollary 7.8 and Proposition 7.13 of \cite{JZ19} are consequences of Theorem~\ref{T-continuity} and Propositions~\ref{P-transcendental} and \ref{P-algebraic}.

Jaikin-Zapirain proved Proposition~\ref{P-limit rank} below in the case $\rk$ is regular \cite[Corollary 7.9]{JZ19}, in terms of $\rk_{\Kb(t)\otimes_\Kb R}$. In an earlier version of this paper Proposition \ref{P-limit rank} is stated as a question. Lemma~\ref{L-average} and Proposition~\ref{P-limit rank} are due to the referee.

\begin{lemma} \label{L-average}
Let $d\in \Nb$ and  let $x_1, \dots, x_d$ be distinct elements in $\Kb$. For each $1\le i\le d$ denote by $\pi_i$ the quotient map $R[t]\rightarrow R$ sending $h(t)$ to $h(x_i)$.  Put $f(t)=\prod_{i=1}^d(t-x_i)\in \Kb[t]$. Then
$$ \rk_f(A)=\frac{1}{d}\sum_{i=1}^d\rk(\pi_i(A))$$
for every $A\in M_{n, m}(R[t])$.
\end{lemma}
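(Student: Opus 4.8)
The plan is to compute $\rk_f$ using a basis of $R[t]/R[t]f$ adapted to the Chinese Remainder decomposition coming from the distinct roots $x_1,\dots,x_d$. Since $t$ is central and $x_i-x_j\in\Kb^{\times}$ for $i\ne j$, the two‑sided ideals $R[t](t-x_i)$ are pairwise comaximal with intersection $R[t]f$, so $R[t]/R[t]f\cong\prod_{i=1}^{d}R[t]/R[t](t-x_i)\cong R^{d}$ as rings via $h+R[t]f\mapsto(\pi_1(h),\dots,\pi_d(h))$, and this map is left $R$‑linear when $R$ acts diagonally on $R^{d}$, since $\pi_i(rh)=r\pi_i(h)$. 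Rather than invoke a noncommutative CRT I would make this explicit: let $\ell_i(t)=\prod_{j\ne i}\frac{t-x_j}{x_i-x_j}\in\Kb[t]$ be the Lagrange interpolation polynomials and put $e_i=\pi_f(\ell_i(t))\in R[t]/R[t]f$.

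\textbf{Step 1 (the adapted basis).} Using that $t-x_j\mid\ell_i(t)$ for $j\ne i$, that $a(t)-a(x_i)$ is divisible by $t-x_i$ for every $a\in R[t]$ (valid because $t$ and $x_i$ are central), and that $\ell_i(t)(t-x_i)$ is a nonzero scalar multiple of $f(t)$, one checks
\begin{align} \label{E-average-key}
e_i\,\pi_f(a)=\pi_i(a)\,e_i\qquad\text{for all }a\in R[t].
\end{align}
Specializing \eqref{E-average-key} gives $e_ie_j=\delta_{ij}e_i$, and $\sum_i\ell_i(t)=1$ (the polynomial $\sum_i\ell_i(t)-1$ has degree $<d$ and vanishes at the $d$ distinct points $x_i$, hence is $0$) gives $\sum_i e_i=1$. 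From \eqref{E-average-key} with $a=h$ we get $\pi_f(h)=\sum_i\pi_i(h)e_i$, so $\{e_1,\dots,e_d\}$ generates $R[t]/R[t]f$ as a left $R$‑module; and if $\sum_i r_ie_i=0$ with $r_i\in R$, right multiplication by $e_j$ gives $r_je_j=0$, i.e. $r_j\ell_j(t)\in R[t]f$, which forces $r_j=0$ by comparing degrees ($f$ is monic of degree $d$ while $\ell_j$ has degree $d-1$ with a unit leading coefficient). Hence $e=(e_1,\dots,e_d)$ is a left $R$‑basis of $R[t]/R[t]f$.

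\textbf{Steps 2--3 (compute and conclude).} By \eqref{E-average-key}, in the basis $e$ one has $\psi_{f,e}(a)=\diag(\pi_1(a),\dots,\pi_d(a))\in M_d(R)$ for every $a\in R[t]$. Hence, for $A\in M_{n,m}(R[t])$, the matrix $\psi_{f,e}(A)\in M_{n,m}(M_d(R))\cong M_{nd,md}(R)$ becomes, after reindexing rows and columns so that the ``within‑block'' coordinate is listed first (a conjugation by $\{0,1\}$‑permutation matrices, which are invertible), the block‑diagonal matrix $\diag(\pi_1(A),\dots,\pi_d(A))$, where $\pi_k(A)\in M_{n,m}(R)$ is the entrywise image of $A$. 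Since multiplying by invertible matrices leaves $\rk$ unchanged, and by condition (iii) of Definition~\ref{D-matrix rank},
\begin{align*}
\rk\bigl(\psi_{f,e}(A)\bigr)=\rk\bigl(\diag(\pi_1(A),\dots,\pi_d(A))\bigr)=\sum_{k=1}^{d}\rk\bigl(\pi_k(A)\bigr),
\end{align*}
and because $\rk_f$ does not depend on the chosen basis, $\rk_f(A)=\tfrac1d\rk(\psi_{f,e}(A))=\tfrac1d\sum_{k=1}^{d}\rk(\pi_k(A))$, as desired.

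The only real obstacle is Step 1: one must be careful that $R$ may be noncommutative, so the splitting $R[t]/R[t]f\cong R^{d}$, the fact that the evaluation maps $\pi_i$ are ring homomorphisms, and the divisibility of $a(t)-a(x_i)$ by $t-x_i$ all depend on $t$ and the $x_i$ being central; and the left $R$‑module structure transported to $R^{d}$ must be recognized as the diagonal one, which is precisely what makes the orthogonal idempotents $e_i$ a left $R$‑basis. Once $e$ is in hand, Steps 2--3 are pure bookkeeping.
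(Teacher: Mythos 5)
Your proof is correct and follows essentially the same route as the paper: both pass to the Chinese‑Remainder decomposition $R[t]/R[t]f\cong R^{d}$ via evaluation at the $x_i$, take the resulting idempotent left $R$-basis (your Lagrange idempotents $e_i=\pi_f(\ell_i)$ are exactly the paper's $w_j=\varphi^{-1}(e_j)$), observe that $\psi_{f,\cdot}$ becomes diagonal in that basis, and conclude by permutation invariance and additivity of $\rk$ on block-diagonal matrices. The only difference is that you construct the idempotents explicitly rather than pulling back the standard basis through the abstract isomorphism, which is a matter of presentation, not substance.
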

\begin{proof} We have a natural isomorphism of $\Kb$-algebras:
$$ \varphi': \Kb[t]/\Kb[t]f\longrightarrow \bigoplus_{i=1}^d\Kb[t]/\Kb[t](t-x_i)\cong \bigoplus_{i=1}^d \Kb$$
given by $(\varphi'(h+\Kb[t]f))_i=h(x_i)$ for $h\in \Kb[t]$ and $1\le i\le d$.
Taking tensor product with $R$, we obtain an isomorphism of $\Kb$-algebras:
$$ \varphi: R[t]/R[t]f\longrightarrow \bigoplus_{i=1}^d R[t]/R[t](t-x_i)\cong \bigoplus_{i=1}^d R$$
given  by $(\varphi(h+R[t]f))_i=h(x_i)=\pi_i(h)$ for $h\in R[t]$ and $1\le i\le d$.
Then $(\varphi\circ \pi_f)(h)=(\pi_1(h), \dots, \pi_d(h))$ for every $h\in R[t]$.

Let $(e_1, \dots, e_d)$ be the standard $R$-basis of the left $R$-module $\bigoplus_{i=1}^d R$, i.e. $(e_j)_i=\delta_{i, j}$ for all $1\le j, i\le d$.
For each $1\le j\le d$ put $w_j=\varphi^{-1}(e_j)\in R[t]/R[t]f$.
Then $w=(w_1, \dots, w_d)$ is an $R$-basis of the left $R$-module $R[t]/R[t]f$. For each $h\in R[t]$ we have
\begin{eqnarray*}
\left[\begin{matrix} e_1 \\ \vdots \\ e_d \end{matrix}\right]\varphi(\pi_f(h))=\left[\begin{matrix} \pi_1(h)e_1 \\ \vdots \\ \pi_d(h)e_d \end{matrix}\right]=\left[\begin{matrix} \pi_1(h) & & \\ & \ddots & \\ & & \pi_d(h) \end{matrix}\right]\cdot \left[\begin{matrix} e_1\\ \vdots\\ e_d \end{matrix}\right].
 \end{eqnarray*}
Applying $\varphi^{-1}$ on both sides, we get
\begin{eqnarray*}
\left[\begin{matrix} w_1 \\ \vdots \\ w_d \end{matrix}\right]\pi_f(h)=\left[\begin{matrix} \pi_1(h) & & \\ & \ddots & \\ & & \pi_d(h) \end{matrix}\right]\cdot \left[\begin{matrix} w_1\\ \vdots\\ w_d \end{matrix}\right],
 \end{eqnarray*}
 whence
 \begin{eqnarray*}
 \psi_{f, w}(h)=\left[\begin{matrix} \pi_1(h) & & \\ & \ddots & \\ & & \pi_d(h) \end{matrix}\right]
 \end{eqnarray*}
 for every $h\in R[t]$. Then for every $A\in M_{n, m}(R[t])$, we can obtain
 $$\left[\begin{matrix} \pi_1(A) & & \\ & \ddots & \\ & & \pi_d(A) \end{matrix}\right]$$
 from  $ \psi_{f, w}(A)$ via certain row exchanges and column exchanges, therefore
\begin{align*}
\rk_f(A)=\frac{1}{d}\rk(\psi_{f, w}(A))=\frac{1}{d}\rk \left[\begin{matrix} \pi_1(A) & & \\ & \ddots & \\ & & \pi_d(A) \end{matrix}\right] =\frac{1}{d}\sum_{j=1}^d\rk(\pi_j(A)).
\end{align*}
\end{proof}

\begin{proposition} \label{P-limit rank}
 Let $\{x_i\}_{i\in \Nb}$ be a sequence of distinct elements in $\Kb$. For each $i\in \Nb$ denote by $\pi_i$ the quotient map $R[t]\rightarrow R$ sending $f(t)$ to $f(x_i)$. Then
 $$\rk_{\cF}(A)=\lim_{i\to \infty}\rk(\pi_i(A))$$
 for every $A\in M_{n, m}(R[t])$.
\end{proposition}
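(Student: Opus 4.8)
The plan is to combine Lemma~\ref{L-average} with Lemma~\ref{L-monic} through a Ces\`aro-averaging argument, and then add a soft sequential argument to upgrade convergence of averages to convergence of the sequence itself. Write $L=\rk_{\cF}(A)$ and $a_i=\rk(\pi_i(A))$ for $i\in\Nb$. Since each $\pi_i(A)\in M_{n,m}(R)$, one has $0\le a_i=\rk(I_n\pi_i(A)I_m)\le\min(\rk(I_n),\rk(I_m))=\min(n,m)$, so the sequence $(a_i)_{i\in\Nb}$ is bounded.

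First I would show that for \emph{every} subsequence $\{x_{i_k}\}_{k\in\Nb}$ of $\{x_i\}_{i\in\Nb}$, the Ces\`aro means $\tfrac1N\sum_{k=1}^N a_{i_k}$ converge to $L$ as $N\to\infty$. For this, set $f_N(t)=\prod_{k=1}^N(t-x_{i_k})\in\Kb[t]$, a monic polynomial of degree $N$. Since the $x_{i_k}$ are distinct, Lemma~\ref{L-average} gives $\rk_{f_N}(A)=\tfrac1N\sum_{k=1}^N\rk(\pi_{i_k}(A))=\tfrac1N\sum_{k=1}^N a_{i_k}$. As $\deg f_N=N\to\infty$, Lemma~\ref{L-monic} gives $\rk_{\cF}(A)=\lim_{N\to\infty}\rk_{f_N}(A)$, which yields the claim. (In particular, applying this to the original sequence already shows the Ces\`aro means of $(a_i)$ converge to $L$, but this alone is not enough.)

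Then I would deduce $\lim_{i\to\infty}a_i=L$. If this failed, there would be some $\varepsilon>0$ and a subsequence with $|a_{i_k}-L|\ge\varepsilon$ for all $k$; by boundedness, pass to a further subsequence with $a_{i_{k_j}}\to L'$ for some $L'\in\Rb$ with $|L'-L|\ge\varepsilon$, in particular $L'\ne L$. On one hand, the Ces\`aro means of a convergent sequence converge to its limit, so $\tfrac1N\sum_{j=1}^N a_{i_{k_j}}\to L'$. On the other hand, the first step applied to the subsequence $\{x_{i_{k_j}}\}_{j\in\Nb}$ says these same means converge to $L$. This contradiction forces $a_i\to L=\rk_{\cF}(A)$.

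The only real subtlety — hence the step I expect to be the main obstacle — is recognizing that Lemma~\ref{L-monic} forces the detour through Ces\`aro averages: its hypothesis requires $\deg f_i\to\infty$, so one cannot feed in the degree-one polynomials $t-x_i$ directly, and average convergence does not by itself give ordinary convergence. The resolution is exactly to run the averaging identity of Lemma~\ref{L-average} along arbitrary subsequences and then exploit boundedness of $(a_i)$. Everything else in the argument is routine.
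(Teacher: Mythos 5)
Your proof is correct and follows essentially the same route as the paper: both arguments identify $\rk_{f_d}(A)$ for $f_d=\prod(t-x_{i_k})$ with the Ces\`aro means of $\rk(\pi_{i_k}(A))$ via Lemma~\ref{L-average}, invoke Lemma~\ref{L-monic} to get convergence of these means to $\rk_{\cF}(A)$, and then use boundedness of the sequence together with a passage to convergent subsequences to upgrade this to ordinary convergence. The paper phrases the last step as ``every convergent subsequence must converge to $\rk_{\cF}(A)$'' rather than as your explicit contradiction, but this is the same argument.
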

\begin{proof} Let $A\in M_{n, m}(R)$. Since $0\le \rk(\pi_i(A))\le n$ for every $i$, it suffices to show that every convergent subsequence of $\{\rk(\pi_i(A))\}_{i\in \Nb}$ converges to $\rk_{\cF}(A)$. Thus passing to a convergent subsequence if necessary, we may assume that
$\rk(\pi_i(A))$ converges to some $L$ as $i\to \infty$.

For each $d\in \Nb$, put $f_d=\prod_{i=1}^d(t-x_i)\in \Kb[t]$. Then $\deg(f_d)\to \infty$ as $d\to \infty$, thus by Lemma~\ref{L-monic} we have\
$$ \rk_{\cF}(A)=\lim_{d\to \infty}\rk_{f_d}(A).$$
Since $x_i\neq x_j$ for $i\neq j$, by Lemma~\ref{L-average} we have
$$\rk_{f_d}(A)=\frac{1}{d}\sum_{i=1}^d\rk_i(A)$$
for each $d\in \Nb$. Then $\rk_{f_d}(A)\to L$ as $d\to \infty$. Therefore $L=\rk_{\cF}(A)$.
\end{proof}

\section{Extension for twisted crossed product $C^*$-algebras} \label{S-trace}

In this section we prove Theorem~\ref{T-same}.

Let $(\alpha, u)$ be a twisted action of a discrete group $\Gamma$ on a unital $C^*$-algebra $\sA$, and let $\sA\rtimes_{\alpha, u}\Gamma$ be the maximal twisted crossed product $C^*$-algebra as in Section~\ref{SS-twisted}. We may think of $\sA$ as a subalgebra of $\sA\rtimes_{\alpha, u}\Gamma$  via the embedding $a\mapsto a \overline{e_\Gamma}$. We say that {\it $(\alpha, u)$ preserves a tracial state $\tr$} of $\sA$ if $\tr$ is  preserved by $\alpha_s$ for every $s\in \Gamma$. Fix a tracial state $\tr$ of $\sA$ preserved by $(\alpha, u)$.

We observe first that $\tr\circ \cE$ is a tracial state of $\sA\rtimes_{\alpha, u}\Gamma$ extending $\tr$.

\begin{lemma} \label{L-trace extension}
$\tr_{\alpha, u}:=\tr\circ \cE$ is a tracial state of $\sA\rtimes_{\alpha, u}\Gamma$ extending $\tr$.
\end{lemma}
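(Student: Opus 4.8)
The plan is to verify directly that $\tr_{\alpha,u}:=\tr\circ\cE$ is a state and that it is tracial, the former being essentially immediate from properties already recorded for $\cE$ and the latter being the real content.

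First I would observe that $\cE$ is a unital positive linear map from $\sA\rtimes_{\alpha,u}\Gamma$ to $\sA$ (stated at the end of Section~\ref{SS-twisted}) and that $\tr$ is a state of $\sA$, so $\tr\circ\cE$ is a composition of unital positive linear maps, hence a unital positive linear functional on $\sA\rtimes_{\alpha,u}\Gamma$, i.e.\ a state. Since $\cE(a\,\overline{e_\Gamma})=a$ for all $a\in\sA$, we get $\tr_{\alpha,u}(a\,\overline{e_\Gamma})=\tr(a)$, so $\tr_{\alpha,u}$ extends $\tr$.

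The substantive step is the trace property $\tr_{\alpha,u}(xy)=\tr_{\alpha,u}(yx)$ for all $x,y\in\sA\rtimes_{\alpha,u}\Gamma$. By density of $\sA*\Gamma$ and continuity of $\tr_{\alpha,u}$ it suffices to check this for $x=a\bar s$ and $y=b\bar t$ with $a,b\in\sA$ and $s,t\in\Gamma$. Using the multiplication rule $(a\bar s)(b\bar t)=a\,\alpha_s(b)\,u_{s,t}\,\overline{st}$ and the fact that $\cE(c\,\bar r)=0$ for $r\neq e_\Gamma$ while $\cE(c\,\overline{e_\Gamma})=c$, both sides vanish unless $t=s^{-1}$. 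So I would reduce to computing, for $t=s^{-1}$,
\begin{align*}
\tr_{\alpha,u}\big((a\bar s)(b\,\overline{s^{-1}})\big)&=\tr\big(a\,\alpha_s(b)\,u_{s,s^{-1}}\big),\\
\tr_{\alpha,u}\big((b\,\overline{s^{-1}})(a\bar s)\big)&=\tr\big(b\,\alpha_{s^{-1}}(a)\,u_{s^{-1},s}\big),
\end{align*}
and show these are equal. The plan here is to use: that $\tr$ is tracial on $\sA$; that $\tr$ is $\alpha$-invariant, so $\tr(\alpha_{s^{-1}}(c))=\tr(c)$; and the cocycle/covariance identities (2) and (3) in the definition of a twisted action, in particular $\alpha_s\alpha_{s^{-1}}=\Ad(u_{s,s^{-1}})$ together with the consequences for $u$ (e.g.\ applying $\alpha_s$ to $u_{s^{-1},s}$ and using identity (3) with suitable choices of $\gamma,s,t$ to relate $\alpha_s(u_{s^{-1},s})$ to $u_{s,s^{-1}}$ via the unitaries $u_{s,e_\Gamma}=u_{e_\Gamma,s}=1_\sA$). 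Concretely, applying $\alpha_{s^{-1}}$ to the first expression and using $\alpha$-invariance of $\tr$ turns it into $\tr\big(\alpha_{s^{-1}}(a)\,\alpha_{s^{-1}}(\alpha_s(b))\,\alpha_{s^{-1}}(u_{s,s^{-1}})\big)$; then $\alpha_{s^{-1}}\alpha_s=\Ad(u_{s^{-1},s})$ rewrites the middle factor, and the cocycle identity rewrites $\alpha_{s^{-1}}(u_{s,s^{-1}})$, after which the trace property of $\tr$ (cyclically permuting the unitary factors, which are invertible) should collapse everything onto $\tr\big(b\,\alpha_{s^{-1}}(a)\,u_{s^{-1},s}\big)$.

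The main obstacle I expect is bookkeeping the unitary cocycles correctly: getting the precise relation between $\alpha_{s^{-1}}(u_{s,s^{-1}})$, $u_{s^{-1},s}$, and the adjoint correction $u_{s^{-1},s}$ coming from $\alpha_{s^{-1}}\alpha_s=\Ad(u_{s^{-1},s})$ right, so that all the stray unitaries cancel under the trace. This is a finite, purely algebraic manipulation using only axioms (1)--(3) of a twisted action plus traciality and $\alpha$-invariance of $\tr$, so no analytic subtlety arises beyond the initial density/continuity reduction; I would just be careful to track which unitary sits where before invoking the trace property of $\tr$ to permute the factors.
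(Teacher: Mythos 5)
Your proposal is correct and follows essentially the same route as the paper's proof: reduce by density and continuity to products $a\bar s\cdot b\bar t$, note both sides vanish unless $t=s^{-1}$, then apply $\alpha_{s^{-1}}$ together with $\alpha$-invariance of $\tr$, the identity $\alpha_{s^{-1}}\alpha_s=\Ad(u_{s^{-1},s})$, and the cocycle relation $\alpha_{s^{-1}}(u_{s,s^{-1}})=u_{s^{-1},s}$, and finish with traciality of $\tr$. The unitary bookkeeping you flag does work out exactly as you outline, and this is precisely the computation in the paper.
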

\begin{proof} Clearly $\tr_{\alpha, u}$ extends $\tr$. Since $\cE$ and $\tr$ are unital positive linear maps, so is $\tr_{\alpha, u}$. That is, $\tr_{\alpha, u}$ is a state.

For any $a, b\in \sA$ and $s\in \Gamma$, we have
\begin{align*}
\tr_{\alpha, u}(a\bar{s} \cdot b\overline{s^{-1}})&=\tr_{\alpha, u}(a \alpha_s(b)u_{s, s^{-1}} \overline{e_\Gamma})\\
&=\tr(a \alpha_s(b)u_{s, s^{-1}})\\
&=\tr(\alpha_{s^{-1}}(a)\alpha_{s^{-1}}(\alpha_s(b))\alpha_{s^{-1}}(u_{s, s^{-1}}))\\
&=\tr(\alpha_{s^{-1}}(a)u_{s^{-1}, s}bu_{s^{-1}, s}^*u_{s^{-1}, s})\\
&=\tr(b\alpha_{s^{-1}}(a)u_{s^{-1}, s})\\
&=\tr_{\alpha, u}(b\alpha_{s^{-1}}(a)u_{s^{-1}, s} \overline{e_\Gamma})\\
&=\tr_{\alpha, u}(b\overline{s^{-1}}\cdot a\bar{s}).
\end{align*}
Since $\tr_{\alpha, u}(a\bar{s})=0$ for all $a\in \sA$ and $s\in \Gamma\setminus \{e_\Gamma\}$, it follows that $\tr_{\alpha, u}(fg)=\tr_{\alpha, u}(gf)$ for all $f, g\in \sA*\Gamma$. As $\tr_{\alpha, u}$ is continuous and $\sA*\Gamma$ is dense in $\sA\rtimes_{\alpha, u}\Gamma$, we conclude that $\tr_{\alpha, u}(fg)=\tr_{\alpha, u}(gf)$ for all $f, g\in \sA\rtimes_{\alpha, u}\Gamma$.
\end{proof}

\begin{lemma} \label{L-unitary}
Let $F$ be a nonempty finite subset of $\Gamma$. Put $\cW=\sum_{s\in F}\sA \bar{s}\subseteq \sA*\Gamma$.
There is a unitary map $\overline{\cW+I_{\tr_{\alpha, u}}}\rightarrow \bigoplus_{s\in F}L^2(\sA, \tr)$ sending $a\bar{s}+I_{\tr_{\alpha, u}}$ for $a\in \sA$ and $s\in F$ to the vector being $\alpha_{s^{-1}}(a)u_{s^{-1}, s}+I_\tr$ at $s$ and $0$ everywhere else.
\end{lemma}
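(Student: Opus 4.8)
The plan is to exhibit the map explicitly, check it is well-defined on the dense subspace $\cW + I_{\tr_{\alpha,u}} \subseteq \cW/I_{\tr_{\alpha,u}}$, and verify it is isometric; since the target is complete, it then extends to a unitary between the completions provided the image is dense, which it visibly is. First I would define, for $a \in \sA$ and $s \in F$, the element $\Psi(a\bar s) \in \bigoplus_{s\in F} L^2(\sA,\tr)$ to be the vector whose $s$-component is $\alpha_{s^{-1}}(a)u_{s^{-1},s} + I_\tr$ and whose other components vanish, and extend $\Psi$ $\Cb$-linearly to all of $\cW$. Since $F$ is finite and each $a\mapsto \alpha_{s^{-1}}(a)u_{s^{-1},s}$ is $\Cb$-linear, this is a well-defined linear map $\cW \to \bigoplus_{s\in F} L^2(\sA,\tr)$. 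The key computation is that $\Psi$ preserves the relevant inner products. For $a,b \in \sA$ and $s,t\in F$, I would compute $\langle a\bar s + I_{\tr_{\alpha,u}},\, b\bar t + I_{\tr_{\alpha,u}}\rangle = \tr_{\alpha,u}((b\bar t)^*(a\bar s))$ using the formula for the adjoint in $\sA*\Gamma$ from Section~\ref{SS-twisted} and the multiplication rule, together with $\tr_{\alpha,u} = \tr\circ\cE$ and the fact that $\cE(c\bar r) = 0$ unless $r = e_\Gamma$. When $s \neq t$ the product $(b\bar t)^*(a\bar s)$ is supported on $t^{-1}s \neq e_\Gamma$, so the inner product is $0$, matching the fact that the $\Psi$-images then live in orthogonal summands. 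When $s = t$, a direct manipulation using $\alpha_{s^{-1}}\alpha_s = \Ad(u_{s^{-1},s})$, the cocycle identities for $u$, and traciality and $\alpha_{s^{-1}}$-invariance of $\tr$ should reduce $\tr_{\alpha,u}((b\bar s)^*(a\bar s))$ to $\tr\big(\alpha_{s^{-1}}(b)^* \alpha_{s^{-1}}(a)\big)$, which is exactly $\langle \alpha_{s^{-1}}(a)u_{s^{-1},s}+I_\tr,\, \alpha_{s^{-1}}(b)u_{s^{-1},s}+I_\tr\rangle$ in $L^2(\sA,\tr)$ (the unitary $u_{s^{-1},s}$ cancels).

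From this inner-product identity two things follow simultaneously: $\Psi$ descends to a well-defined linear map on $\cW/( \cW\cap I_{\tr_{\alpha,u}}) = \cW + I_{\tr_{\alpha,u}} \subseteq L^2(\sA\rtimes_{\alpha,u}\Gamma,\tr_{\alpha,u})$ (because any element of $\cW$ in the kernel of the seminorm maps to a vector of norm zero), and this descended map is isometric. Since finite sums $\sum_{s\in F} a_s\bar s$ with $a_s\in\sA$ map onto the algebraic direct sum of the $\sA/I_\tr$, whose closure is all of $\bigoplus_{s\in F} L^2(\sA,\tr)$, the image is dense. An isometric linear map with dense image between (pre-)Hilbert spaces extends uniquely to a unitary between their completions, giving the desired unitary $\overline{\cW + I_{\tr_{\alpha,u}}} \to \bigoplus_{s\in F} L^2(\sA,\tr)$.

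The main obstacle I anticipate is purely the bookkeeping in the $s=t$ inner-product computation: one must carefully track the twisting cocycle $u$ through the adjoint formula $(\sum f_s\bar s)^* = \sum u_{s^{-1},s}^*\alpha_{s^{-1}}(f_s^*)\overline{s^{-1}}$ and the product formula, then invoke condition (ii) of the twisted action to rewrite $\alpha_{s^{-1}}\alpha_s$ and condition (iii) (or the derived identity $u_{s^{-1},s} = \alpha_{s^{-1}}(u_{s,s^{-1}})$ and unitarity) to make the stray unitary factors cancel, before finally using that $\tr$ is a trace and $\alpha_{s^{-1}}$-invariant. Everything else is routine: linearity, the orthogonality of distinct summands, density of the image, and the standard extension of an isometry to the completion.
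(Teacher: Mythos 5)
Your proposal is correct and follows essentially the same route as the paper: compute $\tr_{\alpha,u}((b\bar t)^*(a\bar s))$ via the adjoint and product formulas, getting $0$ for $s\neq t$ and $\tr\big(u_{s^{-1},s}^*\alpha_{s^{-1}}(b^*)\alpha_{s^{-1}}(a)u_{s^{-1},s}\big)$ for $s=t$, which matches the inner product of the prescribed image vectors, and then extend the resulting isometry with dense image to a unitary (the paper compresses this last step to ``the lemma follows easily''). Note that the cancellation of $u_{s^{-1},s}$ you mention is not even needed, since the target vectors already carry that unitary factor; nor is $\alpha$-invariance of $\tr$ used here.
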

\begin{proof}
For any $a, b\in \sA$ and $s\in F$, we have
\begin{align*}
\left<a \bar{s}+I_{\tr_{\alpha, u}}, b\bar{s}+I_{\tr_{\alpha, u}}\right>&=\tr_{\alpha, u}((b\bar{s})^*(a\bar{s}))\\
&=\tr_{\alpha, u}(u_{s^{-1}, s}^*\alpha_{s^{-1}}(b^*)\overline{s^{-1}}a\bar{s})\\
&=\tr_{\alpha, u}(u_{s^{-1}, s}^*\alpha_{s^{-1}}(b^*)\alpha_{s^{-1}}(a)u_{s^{-1}, s}\overline{e_\Gamma})\\
&=\tr(u_{s^{-1}, s}^*\alpha_{s^{-1}}(b^*)\alpha_{s^{-1}}(a)u_{s^{-1}, s})\\
&=\left<\alpha_{s^{-1}}(a)u_{s^{-1}, s}+I_\tr, \alpha_{s^{-1}}(b)u_{s^{-1}, s}+I_\tr \right>.
\end{align*}
For any $a, b\in \sA$ and distinct $s, t\in F$, we have $\left<a \bar{s}+I_{\tr_{\alpha, u}}, b\bar{t}+I_{\tr_{\alpha, u}}\right>=0$. Now the lemma follows easily.
\end{proof}

We are ready to prove Theorem~\ref{T-same}.

\begin{proof}[Proof of Theorem~\ref{T-same}]
For any vector space $V$ and $r\in \Nb$, any $v\in V$, and any $1\le k\le r$, we write $v\otimes \delta_k$ for the vector in $V^r$ being $v$ at the $k$-th column and $0$ everywhere else, and write $v\otimes \delta^k$ for the vector in $V^{r\times 1}$ being $v$ at the $k$-th row and $0$ everywhere else.

For each nonempty finite subset $F$ of $\Gamma$, put $\cW_F=\sum_{s\in F}\sA \bar{s}\in \hat{\cF}$.

Our argument is similar to the proof of Elek for Linnell's analytic zero-divisor conjecture in the amenable group case \cite{Elek03b}.
Take a finite subset $K$ of $\Gamma$ containing $e_\Gamma$ such that $A_{jk}\in \cW_K$ for all $1\le j\le n$ and $1\le k\le m$. Let $F$ be a nonempty finite subset of $\Gamma$.
Take an $\sA$-basis $w_1, \dots, w_{n|F|}$ for $\cW_F^n$ consisting of $\bar{s}\otimes \delta_j$ for $s\in F$ and $1\le j\le n$, and an $\sA$-basis $\tilde{w}_1, \dots, \tilde{w}_{m|FK|}$ for $\cW_{FK}^m$ consisting of $\bar{s}\otimes \delta_k$ for $s\in FK$ and $1\le k\le m$. Then we have a matrix $B\in M_{n|F|, m|FK|}(\sA)$ determined by
\begin{eqnarray*}
\left[\begin{matrix} w_1A \\ \vdots \\ w_{n|F|}A \end{matrix}\right]=B\left[\begin{matrix} \tilde{w}_1\\ \vdots\\ \tilde{w}_{m|FK|} \end{matrix}\right],
\end{eqnarray*}
and $(\rk_\tr)_{\cW_F}(A)=\rk_\tr(B)$.

Note that
\begin{align} \label{E-basis}
(A^*w_1^*, \dots, A^*w_{n|F|}^*)=(\tilde{w}_1^*, \dots, \tilde{w}_{m|FK|}^*)B^*.
\end{align}
Here $(\bar{s}\otimes \delta_j)^*=u_{s^{-1}, s}^*\overline{s^{-1}}\otimes \delta^j$ for $s\in F$ and $1\le j\le n$, and  $(\bar{s}\otimes \delta_k)^*=u_{s^{-1}, s}^*\overline{s^{-1}}\otimes \delta^k$ for $s\in FK$ and $1\le k\le m$.
Also note that for any $a\in \sA$ and $s\in \Gamma$, if we write $u_{s^{-1}, s}^*\overline{s^{-1}}a$ as $b\overline{s^{-1}}$, then
$b=u_{s^{-1}, s}^*\alpha_{s^{-1}}(a)$ and hence
$$\alpha_s(b)u_{s, s^{-1}}=\alpha_s(u_{s^{-1}, s}^*)u_{s, s^{-1}}au_{s, s^{-1}}^*u_{s, s^{-1}}=u_{s, s^{-1}}^*u_{s, s^{-1}}a=a.$$
Denote  by $\sV_{F^{-1}, n}$ the bijective linear map $(\cW_{F^{-1}})^{n\times 1}\rightarrow \sA^{n|F|\times 1}$ sending $b\overline{s^{-1}}\otimes \delta^j$ for $b\in \sA$, $s\in F$ and $1\le j\le n$ to $\alpha_s(b)u_{s, s^{-1}}\otimes \delta^q$ such that $w_q=\bar{s}\otimes \delta_j$, and
by $\sV_{(FK)^{-1}, m}$ the bijective linear map $(\cW_{(FK)^{-1}})^{m\times 1}\rightarrow \sA^{m|FK|\times 1}$ sending $b\overline{s^{-1}}\otimes \delta^k$ for $b\in \sA$, $s\in FK$ and $1\le k\le m$ to $\alpha_s(b)u_{s, s^{-1}}\otimes \delta^p$ such that $\tilde{w}_p=\bar{s}\otimes \delta_k$.
Then
$$ \sV_{F^{-1}, n}((w_1^*, \dots, w_{n|F|}^*)y)=y$$
for all $y\in \sA^{n|F|\times 1}$, and
$$ \sV_{(FK)^{-1}, m}((\tilde{w}_1^*, \dots, \tilde{w}_{m|FK|}^*)x)=x$$
for all $x\in \sA^{m|FK|\times 1}$.
Now for every $y\in \sA^{n|F|\times 1}$ we have
\begin{align*}
A^*(w_1^*, \dots, w_{n|F|}^*)y=(A^*w_1^*, \dots, A^*w_{n|F|}^*)y\overset{\eqref{E-basis}}=(\tilde{w}_1^*, \dots, \tilde{w}_{m|FK|}^*)B^*y,
\end{align*}
and
hence
$$ \sV_{(FK)^{-1}, m}(A^*(w_1^*, \dots, w_{n|F|}^*)y)=\sV_{(FK)^{-1}, m}((\tilde{w}_1^*, \dots, \tilde{w}_{m|FK|}^*)B^*y)=B^*y.$$
Thus
\begin{align} \label{E-basis change}
 \sV_{(FK)^{-1}, m}(A^*z)=B^*\sV_{F^{-1}, n}(z)
 \end{align}
for all $z\in (\cW_{F^{-1}})^{n\times 1}$.

By Lemma~\ref{L-unitary} there is  a unitary operator $U_{F^{-1}, n}: \overline{\cW_{F^{-1}}+I_{\tr_{\alpha, u}}}^{n\times 1}\rightarrow L^2(\sA, \tr)^{n|F|\times 1}$ sending
$(b\overline{s^{-1}}+I_{\tr_{\alpha, u}})\otimes \delta^j$ for $s\in F$ and $1\le j\le n$ to $(\alpha_s(b)u_{s, s^{-1}}+I_\tr)\otimes \delta^q$ such that $w_q=\bar{s}\otimes \delta_j$, and there is  a unitary operator $U_{(FK)^{-1}, m}: \overline{\cW_{(FK)^{-1}}+I_{\tr_{\alpha, u}}}^{m\times 1}\rightarrow L^2(\sA, \tr)^{m|FK|\times 1}$ sending
$(b\overline{s^{-1}}+I_{\tr_{\alpha, u}})\otimes \delta^k$ for $s\in FK$ and $1\le k\le m$ to $(\alpha_s(b)u_{s, s^{-1}}+I_\tr)\otimes \delta^p$ such that $\tilde{w}_p=\bar{s}\otimes \delta_k$.

Put
$$H=\overline{\pi_{\tr_{\alpha, u}}(A^*)\cdot(\cW_{F^{-1}}+I_{\tr_{\alpha, u}})^{n\times 1}}\subseteq \overline{\cW_{(FK)^{-1}}+I_{\tr_{\alpha, u}}}^{m\times 1}\subseteq L^2(\sA\rtimes_{\alpha, u}\Gamma, \tr_{\alpha, u})^{m\times 1}.$$
From \eqref{E-basis change} we have
$$ U_{(FK)^{-1}, m}\pi_{\tr_{\alpha, u}}(A^*)=\pi_\tr(B^*)U_{F^{-1}, n}$$
on $\overline{\cW_{F^{-1}}+I_{\tr_{\alpha, u}}}^{n\times 1}$. Therefore
\begin{align} \label{E-kernel proj}
 U_{F^{-1}, n}(\overline{\cW_{F^{-1}}+I_{\tr_{\alpha, u}}}^{n\times 1}\cap \ker \pi_{\tr_{\alpha, u}}(A^*))=\ker \pi_\tr(B^*).
\end{align}
and
\begin{align} \label{E-image proj}
U_{(FK)^{-1},m}(H)=\overline{\pi_\tr(B^*)\cdot L^2(\sA, \tr)^{n|F|\times 1}}=\overline{\im \pi_\tr(B^*)}.
\end{align}
Denote by $C^\sharp$ the diagonal matrix in $M_{n|F|}(\sA)$ such that $C^\sharp_{qq}=u_{s, s^{-1}}$ for $1\le q\le n|F|$ and $s\in F$ satisfying $w_q=\bar{s}\otimes \delta_j$, and
by $C$ the diagonal matrix in $M_{m|FK|}(\sA)$ such that $C_{pp}=u_{s, s^{-1}}$ for $1\le p\le m|FK|$ and $s\in FK$ satisfying $\tilde{w}_p=\bar{s}\otimes \delta_k$.

Denote by $Q$ ($\tilde{Q}$ resp.)  the orthogonal projection from $L^2(\sA\rtimes_{\alpha, u}\Gamma, \tr_{\alpha, u})^{m\times 1}$ ($\overline{\cW_{(FK)^{-1}}+I_{\tr_{\alpha, u}}}^{m\times 1}$ resp.) to $H$. Then $H\subseteq \overline{\im \pi_{\tr_{\alpha, u}}(A^*)}$, and hence $Q\le P_{\overline{\im \pi_{\tr_{\alpha, u}}(A^*)}}$. Also from \eqref{E-image proj} we have $U_{(FK)^{-1}, m} \tilde{Q}U_{(FK)^{-1}, m}^*=P_{\overline{\im \pi_\tr(B^*)}}$.
Thus
\begin{align*}
\rk_{\tr_{\alpha, u}}(A^*)&=\tr''_{\alpha, u}(P_{\overline{\im \pi_{\tr_{\alpha, u}}(A^*)}})\\
&=\frac{1}{|FK|}\sum_{t\in (FK)^{-1}}\tr''_{\alpha, u}(\pi_{\tr_{\alpha, u}}(\bar{t}^*)P_{\overline{\im \pi_{\tr_{\alpha, u}}(A^*)}}\pi_{\tr_{\alpha, u}}(\bar{t}))\\
&=\frac{1}{|FK|}\sum_{t\in (FK)^{-1}}\sum_{k=1}^m\left<\pi_{\tr_{\alpha, u}}(\bar{t}^*)P_{\overline{\im \pi_{\tr_{\alpha, u}}(A^*)}}\pi_{\tr_{\alpha, u}}(\bar{t})(\xi_{\tr_{\alpha, u}}\otimes \delta^k), \xi_{\tr_{\alpha, u}}\otimes \delta^k\right>\\
&=\frac{1}{|FK|}\sum_{t\in (FK)^{-1}}\sum_{k=1}^m\left<P_{\overline{\im \pi_{\tr_{\alpha, u}}(A^*)}} ((\bar{t}+I_{\tr_{\alpha, u}})\otimes \delta^k), (\bar{t}+I_{\tr_{\alpha, u}})\otimes \delta^k\right>\\
&\ge \frac{1}{|FK|}\sum_{t\in (FK)^{-1}}\sum_{k=1}^m\left<Q ((\bar{t}+I_{\tr_{\alpha, u}})\otimes \delta^k), (\bar{t}+I_{\tr_{\alpha, u}})\otimes \delta^k\right>\\
&=\frac{1}{|FK|}\sum_{t\in (FK)^{-1}}\sum_{k=1}^m\left<\tilde{Q}((\bar{t}+I_{\tr_{\alpha, u}})\otimes \delta^k), (\bar{t}+I_{\tr_{\alpha, u}})\otimes \delta^k\right>\\
&=\frac{1}{|FK|}\sum_{t\in (FK)^{-1}}\sum_{k=1}^m\left<P_{\overline{\im \pi_\tr(B^*)}} U_{(FK)^{-1}, m}((\bar{t}+I_{\tr_{\alpha, u}})\otimes \delta^k), U_{(FK)^{-1}, m}((\bar{t}+I_{\tr_{\alpha, u}})\otimes \delta^k)\right>\\
&=\frac{1}{|FK|}\sum_{p=1}^{m |FK|}\left<P_{\overline{\im \pi_\tr(B^*)}}\pi_\tr(C) (\xi_\tr\otimes \delta^p),  \pi_\tr(C)(\xi_\tr\otimes \delta^p) \right>\\
&=\frac{1}{|FK|}\tr''(\pi_\tr(C)^*P_{\overline{\im \pi_\tr(B^*)}}\pi_\tr(C))\\
&=\frac{1}{|FK|}\tr''(P_{\overline{\im \pi_\tr(B^*)}})\\
&=\frac{1}{|FK|} \rk_\tr(B^*)\\
&\overset{\eqref{E-rank for adjoint}}=\frac{1}{|FK|} \rk_\tr(B)\\
&=\frac{|F|}{|FK|}\frac{(\rk_\tr)_{\cW_F}(A)}{\dim(\cW_F)}.
\end{align*}
Therefore
$$
\rk_{\tr_{\alpha, u}}(A^*)\ge \lim_{\cW_F}\frac{|F|}{|FK|}\frac{(\rk_\tr)_{\cW_F}(A)}{\dim(\cW_F)}=\lim_{\cW_F}\frac{(\rk_\tr)_{\cW_F}(A)}{\dim(\cW_F)}=(\rk_\tr)_{\cF}(A).$$

Denote by $Q^\sharp$ ($Q^\dag$ resp.) the orthogonal projection from $L^2(\sA\rtimes_{\alpha, u}\Gamma, \tr_{\alpha, u})^{n\times 1}$ ($\overline{\cW_{F^{-1}}+I_{\tr_{\alpha, u}}}^{n\times 1}$ resp.) to $\overline{\cW_{F^{-1}}+I_{\tr_{\alpha, u}}}^{n\times 1}\cap \ker \pi_{\tr_{\alpha, u}}(A^*)$. Then $Q^\sharp\le P_{\ker \pi_{\tr_{\alpha, u}}(A^*)}$. Also from \eqref{E-kernel proj} we have $U_{F^{-1}, n} Q^\dag U_{F^{-1}, n}^*=P_{\ker \pi_\tr(B^*)}$.
Thus
\begin{align*}
\tr''_{\alpha, u}(P_{\ker \pi_{\tr_{\alpha, u}}(A^*)})&=\frac{1}{|F|}\sum_{t\in F^{-1}}\tr''_{\alpha, u}(\pi_{\tr_{\alpha, u}}(\bar{t}^*)P_{\ker \pi_{\tr_{\alpha, u}}(A^*)}\pi_{\tr_{\alpha, u}}(\bar{t}))\\
&=\frac{1}{|F|}\sum_{t\in F^{-1}}\sum_{j=1}^n\left<\pi_{\tr_{\alpha, u}}(\bar{t}^*)P_{\ker \pi_{\tr_{\alpha, u}}(A^*)}\pi_{\tr_{\alpha, u}}(\bar{t})(\xi_{\tr_{\alpha, u}}\otimes \delta^j), \xi_{\tr_{\alpha, u}}\otimes \delta^j\right>\\
&=\frac{1}{|F|}\sum_{t\in F^{-1}}\sum_{j=1}^n\left<P_{\ker \pi_{\tr_{\alpha, u}}(A^*)}((\bar{t}+I_{\tr_{\alpha, u}})\otimes \delta^j), (\bar{t}+I_{\tr_{\alpha, u}})\otimes \delta^j\right>\\
&\ge \frac{1}{|F|}\sum_{t\in F^{-1}}\sum_{j=1}^n\left<Q^\sharp((\bar{t}+I_{\tr_{\alpha, u}})\otimes \delta^j), (\bar{t}+I_{\tr_{\alpha, u}})\otimes \delta^j\right>\\
&= \frac{1}{|F|}\sum_{t\in F^{-1}}\sum_{j=1}^n\left<Q^\dag((\bar{t}+I_{\tr_{\alpha, u}})\otimes \delta^j), (\bar{t}+I_{\tr_{\alpha, u}})\otimes \delta^j\right>\\
&= \frac{1}{|F|}\sum_{t\in F^{-1}}\sum_{j=1}^n\left<P_{\ker \pi_\tr(B^*)}U_{F^{-1}, n}((\bar{t}+I_{\tr_{\alpha, u}})\otimes \delta^j), U_{F^{-1}, n}((\bar{t}+I_{\tr_{\alpha, u}})\otimes \delta^j)\right>\\
&=\frac{1}{|F|}\sum_{q=1}^{n|F|}\left<P_{\ker \pi_\tr(B^*)}\pi_\tr(C^\sharp)(\xi_\tr\otimes \delta^q), \pi_\tr(C^\sharp)(\xi_\tr\otimes \delta^q)\right>\\
&=\frac{1}{|F|}\tr''(\pi_\tr(C^\sharp)^*P_{\ker \pi_\tr(B^*)}\pi_\tr(C^\sharp))\\
&=\frac{1}{|F|}\tr''(P_{\ker \pi_\tr(B^*)}),
\end{align*}
and hence
\begin{align*}
\rk_{\tr_{\alpha, u}}(A^*)&\overset{\eqref{E-rank for kernel}}=n-\tr''_{\alpha, u}(P_{\ker \pi_{\tr_{\alpha, u}}(A^*)})\\
&\le \frac{1}{|F|}\big(n|F|-\tr''(P_{\ker \pi_\tr(B^*)})\big)\\
&\overset{\eqref{E-rank for kernel}}=\frac{1}{|F|} \rk_\tr(B^*)\\
&\overset{\eqref{E-rank for adjoint}}=\frac{1}{|F|}\rk_\tr(B)\\
&=\frac{(\rk_\tr)_{\cW_F}(A)}{\dim(\cW_F)}.
\end{align*}
Therefore
$$\rk_{\tr_{\alpha, u}}(A^*)\le \lim_{\cW_F}\frac{(\rk_\tr)_{\cW_F}(A)}{\dim(\cW_F)}=(\rk_\tr)_{\cF}(A).$$
We conclude that
$$ \rk_{\tr_{\alpha, u}}(A)\overset{\eqref{E-rank for adjoint}}=\rk_{\tr_{\alpha, u}}(A^*)= (\rk_\tr)_{\cF}(A).$$
\end{proof}


\end{document}